\definecolor{smoked}{RGB}{216, 212, 204}
\definecolor{mauve}{RGB}{200, 55, 171}
\definecolor{apricot}{RGB}{250, 144, 4}
\definecolor{sky}{RGB}{66, 169, 244}
\definecolor{plum}{RGB}{76, 0, 102}
\definecolor{lightmauve}{RGB}{232, 173, 220}
\definecolor{lightapricot}{RGB}{253, 211, 155}
\definecolor{lightsky}{RGB}{178, 221, 251}
\definecolor{lightplum}{RGB}{184, 153, 192}
\definecolor{darksmoked}{RGB}{198, 194, 176}
\tikzset{>=latex}
\theoremstyle{definition}
\newtheorem{defn}{Definition}
\newtheorem{rem}{Remark}
\theoremstyle{plain}
\newtheorem{lem}{Lemma}
\newtheorem{prop}{Proposition}
\newtheorem{thm}{Theorem}
 \newtheorem{thmx}{Theorem}
\newtheorem{cor}{Corollary}
\newtheorem{conj}{Conjecture}
\newtheorem{question}{Question}
\newcommand{\maps}{\colon}
\DeclareMathOperator{\arccosh}{arccosh}
\DeclareMathOperator{\Vol}{Vol}
\DeclareMathOperator{\dev}{dev}
\DeclareMathOperator{\fconf}{\tau}
\DeclareMathOperator{\hol}{hol}
\DeclareMathOperator{\R}{\mathbb{R}}
\DeclareMathOperator{\C}{\mathbb{C}}
\DeclareMathOperator{\Z}{\mathbb{Z}}
\DeclareMathOperator{\CP}{\mathbb{CP}}
\DeclareMathOperator{\HH}{\mathbb{H}}
\newcommand{\psl}{\operatorname{PSL}_2 \R}
\newcommand{\SurfMk}{X}
\newcommand{\SurfMkSmooth}{\SurfMk_\textnormal{sm}}
\newcommand{\SurfPk}{X'}
\newcommand{\SurfUni}{\widetilde{X}}
\newcommand{\SphereMk}{\Sigma}
\newcommand{\SphereMkSmooth}{\SphereMk_\textnormal{sm}}
\newcommand{\SphereExtMk}{S}
\newcommand{\SphereExtMkTop}{\SphereExtMk}
\newcommand{\SphereExtMkSmooth}{\SphereExtMk_\textnormal{sm}}
\newcommand{\SpherePk}{\Sigma'}
\newcommand{\SpherePkSmooth}{\SpherePk_\textnormal{sm}}
\newcommand{\SphereExtPk}{S'}
\newcommand{\SphereExtPkSmooth}{\SphereExtPk_\textnormal{sm}}
\newcommand{\SphereUni}{\widetilde{\Sigma}}
\newcommand{\SphereUniSmooth}{\SphereUni_\textnormal{sm}}
\newcommand{\SphereExtUni}{\widetilde{S}}
\newcommand{\SphereExtUniSmooth}{\SphereExtUni_\textnormal{sm}}
\newcommand{\AssembSphere}[1]{\SphereMk_{#1}}
\newcommand{\AssembSphereExt}[1]{\SphereExtMk_{#1}}
\newcommand{\AssembSpherePk}[1]{\SpherePk_{#1}}
\newcommand{\AssembSphereExtPk}[1]{\SphereExtPk_{#1}}
\DeclareMathOperator{\Rep}{Rep}
\DeclareMathOperator{\IntRep}{R\overset{\circ}{e}p}
\DeclareMathOperator{\Hyp}{Hyp}
\DeclareMathOperator{\Sym}{Sym}
\DeclareMathOperator{\Teich}{Teich}
\newcommand{\HypCone}[1]{\Hyp_{#1}}
\newcommand{\HypConeFree}{\Hyp}
\newcommand{\AugHypCone}[1]{\overline{\Hyp}_{#1}}
\newcommand{\RepDT}[1]{\Rep^\mathrm{DT}_{#1}}
\newcommand{\OldIntRepDT}[1]{\mathrm{R}\overset{\circ}{\mathrm{e}}\mathrm{p}_\alpha^\mathrm{DT}}
\newcommand{\IntRepDT}[2]{{\smash{\IntRep}\vphantom{\Rep}}_{#1, #2}^\mathrm{DT}}
\newcommand{\HypConeStruct}{\mathcal{F}}
\newcommand{\Assemb}{\mathcal{S}}
\newcommand{\AugAssemb}{\overline{\mathcal{S}}}
\newcommand{\Haman}{\mathcal{H}}
\title{The geometry of Deroin--Tholozan representations}
\author{Aaron Fenyes}
\address[A.~Fenyes]{}
\email{aaron.fenyes@fareycircles.ooo}
\author{Arnaud Maret}
\address[A.~Maret]{Sorbonne Université and Université Paris Cité, CNRS, IMJ-PRG, F-75005 Paris, France.}
\email{maret.arnaud@unistra.fr}
\date{\today}
\begin{document}
\begin{abstract}
We present a way to build hyperbolic spheres with conical singularities by gluing together simple building blocks. Our construction provides good control over the holonomy of the resulting hyperbolic cone sphere. In particular, it can be used to realize any Deroin-Tholozan (DT) representation as the holonomy of a hyperbolic cone sphere.

Our construction is inspired by the correspondence between DT representations and chains of triangles in the hyperbolic plane. It gives a geometric interpretation of certain action-angle coordinates on the space of DT representations, which come from this correspondence.
\end{abstract}
\maketitle

\section{Introduction}
\subsection{Motivation and results}\label{sec:motivation}
Every Fuchsian representation arises geometrically as the holonomy of a compact hyperbolic surface, which can be constructed by gluing the edges of a Dirichlet domain. There are many adventures to be had in generalizing this classic result to other kinds of surface group representations into $\operatorname{PSL}_2 \R$ and $\operatorname{PSL}_2 \C$; a few of them are related in~\cite{Far21, branch-sing, Mat12} and their references. We'll focus on {\em Deroin-Tholozan} (DT) representations (Section~\ref{sec:dt-review}), which are special representations of the fundamental group of a punctured sphere into $\operatorname{PSL}_2\R$. These are in some ways the opposites of Fuchsian representations: they're \emph{totally elliptic} instead of totally hyperbolic, and their Toledo numbers are near-zero instead of extremal. Deroin and Tholozan showed that every DT representation arises geometrically as the holonomy of a hyperbolic sphere with conical singularities (Section~\ref{cone-review})~\cite[Equation~14 and Corollary~4.6]{DeTh19}. We'll realize this existence result as a piece-by-piece construction.

Our construction is inspired by Maret's {\em triangle chain} parameterization of DT representations (Section~\ref{sec:dt-review})~\cite{action-angle}. By adding flaps to a chain of triangles that encodes a DT representation $\rho$, we build a net that folds up into a \emph{hyperbolic cone sphere} (Section~\ref{sec:space-of-hyperbolic-cone-metrics}) with holonomy $\rho$.
\begin{center}
\vspace{3mm}
\begin{tikzpicture}
\node[anchor=east] at (-0.5, 0) {\reflectbox{\includegraphics[width=6cm]{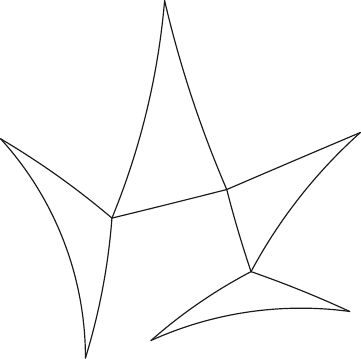}}};
\node[anchor=west] at (0.5, 0) {\reflectbox{\includegraphics[width=6cm]{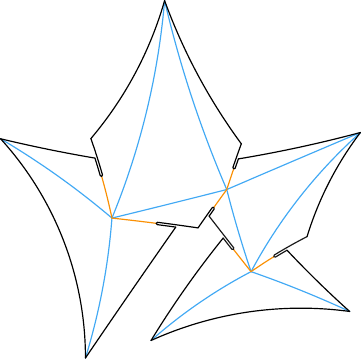}}};
\end{tikzpicture}
\vspace{3mm}
\end{center}
Each individual triangle, with its added flaps, folds up into a three-cornered hyperbolic sphere called a {\em hamantash} (Section~\ref{sec:hamantash})---an especially convenient example of a more general building block called a {\em samosa} (Section~\ref{sec:building-blocks}). The hyperbolic cone sphere that realizes $\rho$ can therefore be seen as a chain of hamantashen which have been slit open and glued corner to corner (Section~\ref{sec:assembly-instructions}). To prove that our construction works, we'll work backwards from a {\em hamantash assembly} of this kind, tuning its parameters so that it unfolds into a net which is built around the desired triangle chain.
\begin{thmx}[Corollary~\ref{cor:surjectivity-holonomies}]\label{thm-intro:surjectivity-holonomies}
Every DT representation is the holonomy of a hyperbolic cone sphere built from a hamantash assembly.    
\end{thmx}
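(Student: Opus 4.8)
The plan is to reverse-engineer a hamantash assembly from Maret's triangle chain encoding of the given representation, and then check that unfolding the assembly recovers that chain. Fix a DT representation $\rho$ of the fundamental group of an $n$-punctured sphere, with rotation numbers $\theta_1, \dots, \theta_n$. By the triangle chain parameterization recalled in Section~\ref{sec:dt-review}, $\rho$ is encoded by a chain of triangles $T_1, \dots, T_{n-2}$ in $\mathbb{H}^2$: all share a common apex $v_0$, consecutive triangles $T_i$ and $T_{i+1}$ share the geodesic side from $v_0$ to an intermediate vertex, and $\rho(\gamma_i)$ is the rotation of $\mathbb{H}^2$ about the $i$-th ``outer'' vertex by $\theta_i$. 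The side lengths along the chain are the action coordinates of $\rho$, and the way the triangles fan out around $v_0$ records the angle coordinates.

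First I would turn each triangle $T_i$ into a building block. Glue flaps onto the two sides of $T_i$ that are not shared with a neighbor in the chain; folding each flap over its base doubles that side of $T_i$, and carrying this out on both free sides turns $T_i$ into a hamantash $\Haman_i$, a hyperbolic sphere with three cone corners (Section~\ref{sec:hamantash}). The point of the general samosa construction of Section~\ref{sec:building-blocks} is that the moduli of such a block---the lengths of its corner-to-corner geodesics and its three cone angles---can be prescribed throughout an explicit range. I would choose the flaps so that the corner of $\Haman_i$ coming from the $i$-th outer vertex has cone angle $\theta_i$, and so that the geodesic arc along which $\Haman_i$ is to meet $\Haman_{i+1}$ has length equal to the shared side $[v_0,\cdot]$ of $T_i$ in the chain.

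Next I would assemble the chain as in Section~\ref{sec:assembly-instructions}: slit each $\Haman_i$ along a geodesic arc between two of its corners, and glue $\Haman_i$ to $\Haman_{i+1}$ corner to corner along the matching arcs, which have equal length by the previous step. Since all identifications are isometries of geodesic arcs, the result $\SphereExtMk$ is again a hyperbolic cone sphere; its cone points are the $n-2$ outer corners, with angles $\theta_1, \dots, \theta_{n-2}$, together with the two corners left over at the ends of the chain, whose angles are built from the apex angles of the $T_i$ and equal $\theta_{n-1}, \theta_n$ by the closure relation for triangle chains. So $\SphereExtMk$ carries the cone data of $\rho$. To see that $\hol(\SphereExtMk) = \rho$ on the nose---that the construction hits the prescribed angle coordinates, not merely the right action coordinates---develop $\SphereExtMk$ into $\mathbb{H}^2$ from a basepoint near a leftover corner: unfolding the flap-folds turns the $\Haman_i$ back into the $T_i$, and unfolding the corner-to-corner gluings lines the $T_i$ up along their shared sides, so the developing map reconstructs precisely the triangle chain attached to $\rho$. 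Going once around the $i$-th outer corner is then the rotation of $\mathbb{H}^2$ about the $i$-th outer vertex by $\theta_i$, which is $\rho(\gamma_i)$; since these generate, $\hol(\SphereExtMk) = \rho$.

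The main obstacle is the double bookkeeping hidden in the two middle steps: showing that the hamantash moduli demanded by an arbitrary triangle chain genuinely lie in the achievable range of the samosa construction, and that the corner-to-corner gluings are globally consistent---the developing map extending across every slit with no monodromy defect, so that the net really does fold up. This is where the hyperbolic trigonometry of Sections~\ref{sec:building-blocks} and~\ref{sec:assembly-instructions} does the work; once it is in place, recognizing the holonomy as $\rho$ through the unfolding is essentially a matter of tracking the triangle chain. Granting that machinery, the corollary itself is immediate: it only combines the statement that the net built around a triangle chain folds into a hamantash assembly with Maret's theorem that every DT representation arises from such a chain.
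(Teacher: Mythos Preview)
Your broad strategy---encode $\rho$ as a triangle chain, thicken each triangle into a three-cornered hyperbolic sphere, glue the pieces so that unfolding recovers the chain---matches the paper's, but the details you fill in are not the paper's construction and would not assemble into the right object. Two places where the proposal goes wrong: First, the triangle chain you describe (all triangles sharing a common apex $v_0$, consecutive triangles sharing a full geodesic side) is not Maret's. In the parameterization of Section~\ref{sec:dt-review}, consecutive triangles share only a \emph{vertex} $B_i$, and the angle $\gamma_i$ between them is a free parameter; there is no common apex. Second, the slits are not arcs between two corners. A slit starts at one corner and ends in the interior of a hemisphere, and the conical cut-and-paste (Section~\ref{sec:conical-cut-paste}) creates a new cone point of angle $4\pi$ at the slit's interior endpoint. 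The assembled sphere therefore has $2n-3$ cone points---the $n$ fractional ones \emph{and} $n-3$ whole singularities---not the $n$ cone points your description produces. These whole singularities are essential: without them the holonomy around each pants curve could not be trivialized, and the holonomy would not factor through $\pi_1\Sigma'$.

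There is also a gap you underestimate: not every triangle chain can be used. For a given geometric presentation and pants decomposition, the chain may contain degenerate triangles or adjacent triangles that overlap (equivalently, $\gamma_i\in(\pi,2\pi)$), and in either case the folding does not go through---the hamantash parameters you need fall outside the admissible range. The paper handles this by first \emph{changing} the geometric presentation and pants decomposition (Propositions~\ref{prop:non-degenerate-pants-decomp}--\ref{prop:no-overlap}) so that every triangle is nondegenerate and every $\gamma_i\in[0,\pi)$; only then does it build the hamantash assembly, choosing slit angles $\phi_i,\phi'_i$ in the northern hemisphere with $\phi'_i+\beta_i/2-\phi_i=\gamma_i$ and shrinking the slit lengths $\ell_i$ until the hamantash conditions of Lemmas~\ref{lem:bound-on-l-one-slit}--\ref{lem:bound-on-l-two-slits} are met. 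The holonomy computation is then packaged in Theorem~\ref{thm:holonomy-hamantash}. Your final paragraph gestures at ``showing the hamantash moduli lie in the achievable range,'' but this is not a trigonometric check that can be pushed into the machinery---it requires the combinatorial argument of Section~\ref{sec:no-overlap-no-degenerate-triangles} to alter the chain itself.
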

Our construction requires a triangle chain without degenerate triangles or overlaps between adjacent triangles. Fortunately, the triangle chain parameterization of DT representations on an $n$-punctured sphere $\SpherePk$ depends on a geometric presentation of $\pi_1\SpherePk$ and a compatible pants decomposition of $\SpherePk$ (Section~\ref{sec:dt-review}). By choosing these carefully, we can encode any given DT representation as a triangle chain that fulfills our wishes.
\begin{thmx}[Propositions~\ref{prop:non-degenerate-pants-decomp}--\ref{prop:no-overlap}]\label{thm-intro:no-overlap-no-degenerate-triangles}
For every DT representation $\rho\colon\pi_1\SpherePk\to\operatorname{PSL}_2\R$, there exists a system of generators of $\pi_1\SpherePk$ and a compatible pants decomposition of $\SpherePk$ such that in the triangle chain associated to $\rho$, no consecutive triangles overlap and no triangles are degenerate.
\end{thmx}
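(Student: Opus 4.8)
The plan is to recast the statement as the claim that a family of ``triangle-chain charts'' covers the whole space of DT representations, and then to establish this covering by induction on the number $n$ of punctures. Concretely, a geometric presentation $c_1,\dots,c_n$ of $\pi_1\SpherePk$ together with a compatible caterpillar pants decomposition $\mathcal D$ associates to every DT representation $\rho$ a chain $T_1,\dots,T_{n-2}$ of triangles in $\HH$, where $T_k$ has as vertices the fixed points of the three elliptic boundary holonomies $\rho(c_1\cdots c_k)$, $\rho(c_{k+1})$, $\rho(c_1\cdots c_{k+1})$ of the $k$-th pair of pants, and $T_k$ is joined to $T_{k+1}$ at the vertex associated to the shared curve $c_1\cdots c_{k+1}$. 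Write $U_{\mathcal D}\subseteq\RepDT{}$ for the set of $\rho$ whose chain for $\mathcal D$ has no degenerate triangle and no overlap between consecutive triangles. Each forbidden configuration is cut out by a non-trivial real-analytic condition on the connected variety $\RepDT{}$, so $U_{\mathcal D}$ is open and dense; the content of the theorem is that $\bigcup_{\mathcal D}U_{\mathcal D}=\RepDT{}$, i.e.\ that \emph{every} $\rho$ lies in the good locus of \emph{some} $\mathcal D$. Two structural inputs will be used throughout: that for a DT representation every consecutive subproduct $\rho(c_i\cdots c_j)$ is elliptic and non-central, so all the triangle vertices are honest points of $\HH$; and that the rotation numbers of the boundary curves of $\rho$ — hence, through the gluing equations, of the interior curves $c_1\cdots c_k$ — are confined to the narrow window forced by the near-extremal Toledo invariant of a DT representation.

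For the non-degeneracy part (the first of the cited propositions), note that $T_k$ is non-degenerate exactly when the three rotation numbers of the $k$-th pair of pants satisfy a strict triangle-type inequality, equivalently when the three elliptic fixed points are not collinear. I would show that the punctures can be ordered so that, taking the interior pants curves to be the consecutive subproducts $c_1\cdots c_k$, all of these inequalities are strict. This is done by induction on $n$: split off the first pair of pants, observe that the representation of the remaining $(n-1)$-punctured sphere obtained by capping that curve is again DT (additivity of the Toledo invariant over the decomposition), apply the inductive hypothesis to it, and then choose the two punctures forming the first pair of pants so that $T_1$ is non-degenerate — possible because degeneracy rules out only a lower-dimensional locus of choices, and the DT inequalities leave room to avoid it.

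For the no-overlap part (the last cited proposition), start from a non-degenerate pants decomposition. Consecutive triangles $T_k,T_{k+1}$ overlap precisely when the chain folds back on itself at the shared vertex, which is a condition on the angular data there. I would control the cumulative turning of the chain: because a DT representation is totally elliptic with Toledo invariant near its extreme value, the rotation numbers of the partial products $\rho(c_1\cdots c_k)$ vary only within a narrow range, which bounds the turning at each join strictly below the threshold at which an overlap first appears. After a further reordering of the punctures, chosen monotonically so that the pivot points are laid out without backtracking — and using the twisting freedom in the geometric presentation, which shifts the relevant angles — no two consecutive triangles can wrap past one another. The remaining point is that this reordering can be taken compatible with the one produced in the previous step; I would secure this by carrying both requirements through the same induction on $n$, so that at each stage the choice of the next pair of pants simultaneously avoids a new degeneracy and a new overlap.

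The delicate step is the no-overlap part together with this compatibility. Removing a degenerate triangle is a local, codimension-one matter that is easy to dodge by a combinatorial change of the ordering, whereas ruling out an overlap is a semi-global statement about the isometric placement of two triangles sharing a vertex, and it must be arranged without recreating a degeneracy elsewhere along the chain. I expect the heart of the argument to be a quantitative estimate extracted from the precise form of the DT inequalities, showing that once the punctures are suitably ordered the turning angle at every pivot is bounded away from the overlap threshold, uniformly over all DT representations.
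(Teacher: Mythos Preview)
Your proposal has genuine gaps in both halves.

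\textbf{Non-degeneracy.} Your inductive scheme is underspecified where it matters. The choice of which two punctures start the chain is a \emph{finite} combinatorial choice, so saying ``degeneracy rules out only a lower-dimensional locus of choices'' is not a valid argument here. More seriously, even if the first triangle is non-degenerate and the capped-off representation on $n-1$ punctures is again DT (which you assert but do not check), the inductive hypothesis lets you reorder the remaining punctures freely; but the resulting chain on $n$ punctures must still be a \emph{caterpillar} decomposition, so the ordering you produce on the smaller sphere has to be compatible with the first pair already chosen. You never address this compatibility. The paper's proof is quite different: it phrases the construction as a game of removing punctures one by one, and the key step is a \emph{backtracking} lemma showing that if every available next move would create a degenerate triangle, then the \emph{previous} move can be replaced by a different one that both avoids disqualification and leaves a non-blocked position. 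This relies on the observation that a triangle degenerates only when all three fixed points coincide (not merely when they are collinear), which makes degeneracy a much rarer event than your sketch suggests.

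\textbf{No overlap.} This part is where your approach fails outright. You claim the near-extremal Toledo invariant forces the turning angle at each shared vertex to stay below the overlap threshold. That is false: in the action-angle coordinates the angle $\gamma_b$ between consecutive triangles is a \emph{free} coordinate ranging over the whole circle $\R/2\pi\Z$, and overlap means exactly $\gamma_b\in(\pi,2\pi)$. The DT inequalities constrain the action coordinates $\beta_b$, not the $\gamma_b$, so no ordering of the punctures can by itself bound the turning. The paper's mechanism is the one you mention only in passing: a Dehn twist along the pants curve $b$ changes the geometric presentation in a way that shifts $\gamma_b$ by $\beta_b$ (or by $\beta_b'=2\pi-\beta_b$ for the inverse twist) while leaving all other action-angle coordinates untouched. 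Since $\min(\beta_b,\beta_b')\le\pi$, iterated twists bring $\gamma_b$ into $[0,\pi)$, and this can be done independently at every shared vertex. The twist is the entire argument, not a final adjustment layered on top of a quantitative bound.
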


The triangle chain parameterization encodes a system of action-angle coordinates on the space of DT representations~\cite{action-angle}. These coordinates pull back to simple linear functions on the space of hamantash assemblies (Theorem~\ref{thm:holonomy-hamantash}). They make it easy to describe the deformations of a hamantash assembly that leave the holonomy unchanged, which account for all the local deformations of the underlying conformal structure (Section~\ref{sec:hyperbolic-cone-structures})---parameterizing them in some way (Question~\ref{q:get-coordinates-for-Teich-?}). We might therefore learn a lot by using hamantash assemblies, and {\em samosa assemblies} more generally, to chart the space of hyperbolic cone structures on a sphere (Definition~\ref{defn:hyperbolic-cone-structure}). To formalize this idea, we consider spaces $\Assemb_{\alpha, \Upsilon}^\varepsilon$ of samosa assemblies whose shared combinatorial form $\Upsilon$ and parameter restriction $\varepsilon$ serve to identify all of them, up to isotopy, with the same topological $(2n-3)$-marked sphere $\AssembSphereExt{\Upsilon}$. Each samosa assembly in $\Assemb_{\alpha, \Upsilon}^\varepsilon$ can then be {\em realized} as a hyperbolic cone structure, up to isotopy, on $\AssembSphereExt{\Upsilon}$~(Section~\ref{sec:marking}). The {\em realization map} that describes this process turns out to be a well-behaved chart for the space of hyperbolic cone structures. We've proven this for the simplest choices of $\Upsilon$, and we expect it to hold in general.

\begin{thmx}[Theorem~\ref{thm:realization-map-local-homeo}]\label{thm-intro:realization-map-local-homeo}
For any {\em chained pants assembly} $\Upsilon$ (Section~\ref{sec:assembly-instructions-parametrization}), the realization map from $\Assemb_{\alpha, \Upsilon}^\varepsilon$ to the space of hyperbolic cone structures on $\AssembSphereExt{\Upsilon}$ is a homeomorphism onto its image. Moreover, if Conjecture~\ref{smooth-atlas} holds, the realization map is a diffeomorphism onto its image.
\end{thmx}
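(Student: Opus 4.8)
The plan is to realize the realization map $R\colon\Assemb_{\alpha,\Upsilon}^\varepsilon\to\{\text{hyperbolic cone structures on }\AssembSphereExt{\Upsilon}\}$ as an open, continuous injection; such a map is automatically a homeomorphism onto its image, and the image will be open, which is the promised chart. I would organize the argument in three steps—continuity, injectivity, openness (local surjectivity)—and then add a short coda upgrading to the smooth category under Conjecture~\ref{smooth-atlas}.

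\emph{Continuity.} Realizing a samosa assembly means taking its constituent samosas (hamantashen), which depend real-analytically on their shape parameters, slitting them, and regluing along the slits by isometries prescribed by the twist parameters and the combinatorial scheme $\Upsilon$. The resulting developing data, and hence the induced point in the space of hyperbolic cone structures, vary continuously—indeed real-analytically—with the assembly parameters. The one thing to verify is that the restriction $\varepsilon$ keeps every slit of positive length and every regluing free of overlaps throughout, so that the construction never degenerates; this is exactly what $\varepsilon$ is designed for, and it follows from the analysis of assemblies in Section~\ref{sec:assembly-instructions}.

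\emph{Injectivity, and the main obstacle.} Given the marked cone structure $R(s)$, I want to recover $s$. Its holonomy is a DT representation, so the triangle-chain / action-angle coordinates are defined, and by Theorem~\ref{thm:holonomy-hamantash} they are \emph{linear} in the assembly parameters; this pins down every parameter that the holonomy detects. The remaining parameters move $s$ along a holonomy-fixing deformation—equivalently, along a deformation of the underlying conformal structure—and must be read off geometrically using the marking: one takes the canonical geodesic representatives, on $\AssembSphereExt{\Upsilon}$, of the marked curves corresponding to the hamantash boundaries and slits, and checks that their lengths (and the way the pieces they cut out fit together) determine the missing parameters. The delicate point—the crux of the whole theorem—is to exclude discrete ambiguities: a Dehn-twist ambiguity in a regluing, or an ambiguity in how a hamantash was slit, would make $s$ recoverable only up to a finite indeterminacy. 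This is precisely where the \emph{chained} hypothesis on $\Upsilon$ (a single chain, no branching) and the restriction $\varepsilon$ earn their keep, and proving that they force uniqueness is the main work.

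\emph{Openness and conclusion.} For local surjectivity I would run the reconstruction of the previous step as a genuine cut-and-reassemble operation: given a hyperbolic cone structure $\xi$ close to $R(s)$, the marked curves still carry geodesic representatives whose complementary pieces are hamantashen with parameters in the $\varepsilon$-range (an open condition, satisfied at $R(s)$), so cutting along them produces an assembly $s'$ near $s$ with $R(s')=\xi$; since geodesic representatives vary continuously, $R$ is open. An open continuous injection is a homeomorphism onto its open image, which is the chart. (Alternatively, once one records that $\Assemb_{\alpha,\Upsilon}^\varepsilon$ and the target are manifolds of the same dimension—matched by the action-angle picture—invariance of domain gives the same conclusion directly from continuity and injectivity.) Finally, under Conjecture~\ref{smooth-atlas} the target is a smooth manifold in whose atlas $R$ is smooth (the gluing being real-analytic), and its differential is injective—in the holonomy-varying directions by the linearity in Theorem~\ref{thm:holonomy-hamantash}, in the holonomy-fixing directions by differentiating the cut-and-reassemble operation—so the inverse function theorem upgrades $R$ to a diffeomorphism onto its open image.
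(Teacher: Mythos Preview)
Your outline has a structural gap in the injectivity step. You propose to recover the assembly parameters from $R(s)$ by first reading off the holonomy and invoking Theorem~\ref{thm:holonomy-hamantash} to pin down the ``holonomy-visible'' parameters via the action-angle/triangle-chain picture. But Theorem~\ref{thm:holonomy-hamantash} is stated and proved only for $\Haman_{\alpha,\Upsilon}^{\text{north}}$, the hamantash assemblies with all slits in northern hemispheres; the theorem you are proving concerns the full space $\Assemb_{\alpha,\Upsilon}^\varepsilon$ of samosa assemblies, and a samosa need not unfold. Moreover, Theorem~\ref{thm:holonomy-hamantash} appears \emph{after} Theorem~\ref{thm:realization-map-local-homeo} in the logical flow, so leaning on it here risks circularity. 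You also mischaracterize $\varepsilon$: it is not a nondegeneracy constraint but a choice of hemisphere (sign of slit angle) for each slit, and its role in the proof is precisely to resolve the $\pm\arccos$ ambiguity when recovering $\phi_b,\phi_b'$ from lengths.

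The paper's route is quite different and more elementary. The key tool is Proposition~\ref{length-homeo}: the $\mathcal{C}^0$ topology on $\HypCone{\alpha}(\AssembSphereExt{\Upsilon})$ is locally induced by the edge lengths of a geodesic triangulation. With that in hand, both continuity and continuity of the inverse reduce to writing down explicit hyperbolic-trigonometric formulas (Lemmas~\ref{lem:kink-angle} and~\ref{lem:slit-angle}) relating the samosa parameters $(\beta_b,\ell_b,\phi_b,\phi_b')$ to intrinsic quantities of the realized cone sphere---the length $c_b$ and kink angle $\kappa_b$ of the shortest loop through the whole singularity in class $b$, and the lengths $\lambda,\xi$ of certain arcs to the un-slit corner. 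Injectivity is then just the invertibility of these formulas (equations~\eqref{eq:slit-length-as-function-of-h}--\eqref{eq:slit-angle-2-as-function-of-h}), with $\varepsilon$ supplying the signs; there is no appeal to holonomy, no cut-and-reassemble, and no invariance of domain. The smoothness upgrade is likewise obtained by inspecting the same formulas, not by an inverse-function-theorem argument on the differential of $R$.
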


The topology we use for the space of hyperbolic cone structures on $\AssembSphereExt{\Upsilon}$ comes from the compact-open topology on the space of developing maps (Section~\ref{sec:space-of-hyperbolic-cone-structures}). This is what we call the \emph{$\mathcal{C}^0$ topology} on the space of hyperbolic cone structures up to isotopy, in contrast to the more traditional $\mathcal{C}^\infty$ topology on a space of hyperbolic cone {\em metrics} (Section~\ref{sec:space-of-hyperbolic-cone-metrics}). We'll prove that the $\mathcal{C}^0$ topology can be expressed in terms of the lengths of finitely many geodesic arcs between cone points (Proposition~\ref{length-homeo}). This correspondence allows for an elementary proof of Theorem~\ref{thm-intro:realization-map-local-homeo} based on hyperbolic trigonometry.

\subsection{Structure of the paper}
We first review the theory of cone metrics on surfaces (Section~\ref{cone-review}), recalling McOwen and Troyanov's uniformization of hyperbolic cone metrics (Corollary~\ref{cor:troyanov}). We then review DT representations, their parameterization by triangle chains, and the corresponding parameterization by action-angle coordinates (Section~\ref{sec:dt-review}).

In Chapter~\ref{chap:param-hyp-cone-metrics}, we discuss topologies on the space of hyperbolic cone metrics and on the space of hyperbolic cone structures. After reviewing the traditional topology, which comes from the $\mathcal{C}^\infty$ topology on the space of developing maps (Section~\ref{sec:space-of-hyperbolic-cone-metrics}), we introduce a coarser topology that comes from the $\mathcal{C}^0$ topology on developing maps (Section~\ref{sec:hyperbolic-cone-structures}). This coarser topology is made-to-measure for our work, and it can be induced by the lengths of geodesic arcs that run between cone points to triangulate the sphere (Section~\ref{sec:parametrization-using-triangulations}).

Our piece-by-piece construction of hyperbolic cone spheres is described in Chapter~\ref{sec:building-cone-surfs}. We introduce the building blocks (Section~\ref{sec:building-blocks}), called samosas, and explain how to assemble samosas into a hyperbolic cone sphere using a conical cut and paste procedure (Section~\ref{sec:conical-cut-paste}). We conclude Chapter~\ref{sec:building-cone-surfs} by proving that the samosa assemblies parameterize hyperbolic cone structures on $\AssembSphereExt{\Upsilon}$ via the realization map (Theorem~\ref{thm-intro:realization-map-local-homeo}). Hamantashen and hamantash assemblies are introduced in Chapter~\ref{sec:unfolding} as a special kind of samosas that can be unfolded.

We prove the main result of the paper (Theorem~\ref{thm-intro:surjectivity-holonomies}) in Chapter~\ref{chap:parametrizing-DT}. The proof unfolds into a bunch of results. We start by describing the image of the realization map restricted to hamantash assemblies (Theorem~\ref{thm:holonomy-hamantash}). We proceed with a proof of Theorem~\ref{thm-intro:no-overlap-no-degenerate-triangles} whose flavour contrasts with the other geometric arguments presented in this paper (Section~\ref{sec:no-overlap-no-degenerate-triangles}).
\subsection{Acknowledgments}
Many thanks to Ursula Hamenst\"{a}dt, Fanny Kassel, Andrea Monti, and Bram Petri for helpful guidance and fruitful conversations. We extend special appreciation to the organizers of the workshop ``Teichm\"{u}ller Theory: Classical, Higher, Super and Quantum'', held at Oberwolfach in August 2023. Their hospitality enabled us to make substantial progress on this work.

AF is grateful for the support of the ReNewQuantum collaboration. This paper is partly a result of the ERC-SyG project, Recursive and Exact New Quantum Theory (ReNewQuantum) which received funding from the European Research Council (ERC) under the European Union's Horizon 2020 research and innovation programme under grant agreement No 810573.

AM acknowledges partial funding by the Deutsche Forschungsgemeinschaft under Germany’s Excellence Strategy EXC2181/1 -- 390900948 (the Heidelberg STRUCTURES Excellence Cluster), the Collaborative Research Center SFB/TRR 191 -- 281071066 (Symplectic Structures in Geometry, Algebra and Dynamics), and the Research Training Group RTG 2229 -- 281869850 (Asymptotic Invariants and Limits of Groups and Spaces).
\section{Background}
\subsection{Review of cone metrics}\label{cone-review}
Let $\SurfMkSmooth$ be a smooth compact surface with a finite set $\mathcal{P}$ of marked points.
\begin{defn}\label{defn:cone-metric}
A \emph{cone metric} on $\SurfMkSmooth$ is a smooth Riemannian metric $h$ on the complement of the marked points which has a special form near each marked point. Near the marked point $p \in \mathcal{P}$, there must be a local chart $z \maps \SurfMkSmooth \to \C$ that sends the marked point to zero and gives
\[
h = e^{2u}\,|z^{-\nu_p/2\pi}\,dz|^2
\]
for some real number $\nu_p < 2\pi$ and some smooth function $u$. The number $\nu_p$ is called the {\em angle defect} of $h$ at the marked point $p$, and the positive number $\theta_p = 2\pi - \nu_p$ is called the {\em cone angle}. The marked points with nonzero angle defects are called the {\em conical singularities} of $h$.
\end{defn}

A cone metric is {\em hyperbolic} if it has constant curvature $-1$ away from the marked points. When $\SurfMkSmooth$ is equipped with a hyperbolic cone metric, it is called a {\em hyperbolic cone surface}. If we give $\SurfMkSmooth$ a hyperbolic cone metric $h$, we can find its area $\Vol(h)$ from the Gauss-Bonnet theorem:
\begin{align*}
\Vol(h) & = 4\pi(g-1) + \sum_{p \in \mathcal{P}} \nu_p \\
& = 4\pi(g-1) + 2\pi|\mathcal{P}| - \sum_{p \in \mathcal{P}} \theta_p,
\end{align*}
where $g$ is the genus of $\SurfMkSmooth$ and $\nu \in (-\infty, 2\pi)^{\mathcal{P}}$ is the tuple of angle defects of $h$. Since the cone angles $\theta_p$ are positive, this formula gives an upper bound on $\Vol(h)$, which is realized in the limit where the conical singularities sharpen to cusps.

If you prefer geometric topology to Riemannian geometry, you can also define a hyperbolic cone surface as a compact surface created by gluing finitely many hyperbolic triangles along isometries between their sides.

Troyanov showed that a negatively curved cone metric is determined uniquely by its curvature, conformal structure, and angle defects. Moreover, the curvature and conformal structure can be chosen freely, and the angle defects are constrained only by the Gauss-Bonnet formula, which becomes an inequality when we ask for the complement of the marked points to have negative total curvature. Here's what Troyanov's theorem says about hyperbolic cone metrics.
\begin{cor}[{\cite[Theorem~A]{Tro91}}]\label{cor:troyanov}
Let $\SurfMkSmooth$ be a smooth finite-type surface with genus $g$ and a finite set $\mathcal{P}$ of marked points. Choose a conformal structure on $\SurfMkSmooth$ and a tuple of angle defects $\nu \in (-\infty, 2\pi)^\mathcal{P}$ with
\[ 4\pi(g-1) + \sum_{p \in \mathcal{P}} \nu_p > 0. \]
There is a unique hyperbolic cone metric on $\SurfMkSmooth$ which induces the chosen conformal structure and has angle defect $\nu_p$ at each marked point $p \in \mathcal{P}$.
\end{cor}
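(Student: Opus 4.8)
The plan is to deduce this corollary directly from Troyanov's Theorem~A in~\cite{Tro91}, which solves the prescribed-curvature problem on a compact Riemann surface carrying a conical divisor; almost all of the work is a translation of conventions and a bookkeeping check that our hypothesis is exactly Troyanov's. First I would set up the dictionary. Troyanov fixes a compact Riemann surface $M$ together with a real divisor $\sum_{p\in\mathcal{P}}\beta_p\,p$ with each $\beta_p>-1$, and considers conformal metrics whose local model near $p$ is $|z|^{2\beta_p}|dz|^2$, so that the cone angle at $p$ is $2\pi(1+\beta_p)$. Given the data of the corollary I would set $\beta_p:=-\nu_p/2\pi$. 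Since $\nu_p<2\pi$ this gives $\beta_p>-1$; the cone angle $2\pi(1+\beta_p)$ equals $2\pi-\nu_p=\theta_p$; and $|z|^{2\beta_p}|dz|^2=|z^{-\nu_p/2\pi}\,dz|^2$, so a hyperbolic cone metric in the sense of Definition~\ref{defn:cone-metric} with angle defects $\nu$ is precisely a conformal metric of constant curvature $-1$ with that divisor in Troyanov's sense, carrying the regularity Troyanov's solutions enjoy at the marked points. Here I may take $\SurfMkSmooth$ to be closed: the constraint $\nu_p<2\pi$ excludes cusps, so there is no need to treat punctures as a separate feature, and all the topology is recorded by $g$ and $\mathcal{P}$.

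Next I would translate the inequality. Troyanov's existence-and-uniqueness statement in the constant negative curvature case applies exactly when the conic Euler characteristic $\chi(M,\mathfrak{D})=\chi(M)+\sum_{p\in\mathcal{P}}\beta_p$ is negative. With our dictionary,
\[
\chi(M,\mathfrak{D})=(2-2g)-\frac{1}{2\pi}\sum_{p\in\mathcal{P}}\nu_p=-\frac{1}{2\pi}\Big(4\pi(g-1)+\sum_{p\in\mathcal{P}}\nu_p\Big),
\]
so $\chi(M,\mathfrak{D})<0$ is literally the hypothesis $4\pi(g-1)+\sum_{p}\nu_p>0$. This is of course the same inequality Gauss--Bonnet forces on the area as recorded above: demanding negative total curvature on $\SurfMkSmooth\setminus\mathcal{P}$ is demanding positive area. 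Applying Troyanov's theorem with target curvature the constant $-1$ then produces a unique conformal metric on $\SurfMkSmooth$ in the chosen conformal class, with curvature $-1$ off $\mathcal{P}$ and angle defect $\nu_p$ at each $p\in\mathcal{P}$; unwinding the dictionary, this is exactly the unique hyperbolic cone metric asserted by the corollary.

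There is no real analytic difficulty here—everything is contained in~\cite{Tro91}—so the step I would be most careful with is the matching of hypotheses rather than any estimate: confirming that the normalizations of the conical model and of the cone angle agree up to the factor $2\pi$ built into $\beta_p=-\nu_p/2\pi$, and that Troyanov's Theorem~A genuinely covers an arbitrary compact surface with a prescribed \emph{negative constant} curvature. The latter is the coercive, uniquely solvable regime of Troyanov's variational problem, so both existence and uniqueness come for free once $\chi(M,\mathfrak{D})<0$; beyond citing~\cite{Tro91}, nothing further is needed.
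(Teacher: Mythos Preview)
Your proposal is correct and is exactly the kind of dictionary one writes to specialize Troyanov's Theorem~A to the constant curvature $-1$ case. The paper itself gives no proof of this corollary: it is stated as a citation of~\cite{Tro91} (with a note that the special case was also proven by McOwen~\cite{McOw88}), so your translation of conventions and verification that the hypothesis matches Troyanov's negative conic Euler characteristic condition is more than what the paper provides.
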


Troyanov actually proved that one can chose any smooth negative function to be the curvature of the cone metric, not only the constant function~$-1$. The special case of Troyanov's theorem presented in Corollary~\ref{cor:troyanov} was proved independently by McOwen~\cite{McOw88}.

\subsection{Review of Deroin--Tholozan representations}\label{sec:dt-review}
Let $\SpherePk$ be an oriented topological sphere with $n \ge 3$ punctures. The representations of the fundamental group $\pi_1\SpherePk$ into $\operatorname{PSL}_2\R$ form a non-compact semi-algebraic variety. Even if you restrict your attention to representations with elliptic holonomies around the punctures, and you fix the rotation angle of the holonomy around each puncture, you might still expect every connected component of the resulting subvariety to be non-compact. In some cases, however, you'll find that one component is compact, against all odds. Deroin--Tholozan established the existence of these compact components for all $n\geq 4$~\cite{DeTh19}. The first examples were spotted by Benedetto--Goldman two decades earlier, in visualizations of the $n=4$ case~\cite{BenGol}.

To formally introduce these compact components, we first describe their setting in more detail. The punctures of $\SpherePk$ form a finite set $\mathcal{P}$. Fix a tuple of angles $\alpha \in (0,2\pi)^\mathcal{P}$, and consider the representations $\pi_1\SpherePk \to \operatorname{PSL}_2\R$ that send counterclockwise loops isolating puncture $p \in \mathcal{P}$ to elliptic elements with rotation angle\footnote{We say an elliptic element of $\operatorname{PSL}_2\R$ has rotation angle $\alpha \in (0, 2\pi)$ if it rotates the hyperbolic plane counterclockwise by angle $\alpha$ around its fixed point---or, equivalently, if it's conjugate to 
\[\pm\left[\begin{array}{cc}
    \hphantom{-}\cos(\alpha/2) & \sin(\alpha/2)\\ -\sin(\alpha/2)&\cos(\alpha/2)
\end{array}\right].\]} $\alpha_p$. We can recognize these representations just by checking one loop around each puncture, since all the counterclockwise loops isolating a given puncture are conjugate to each other. The conjugacy classes of these representations form the \emph{$\alpha$-relative character variety} $\Rep_\alpha(\SpherePk,\operatorname{PSL}_2\R)$.

If we combine the results of Deroin--Tholozan, supported by the observations of Benedetto--Goldman, with the work of Mondello on the topology of relative $\operatorname{PSL}_2\R$ character varieties~\cite[Corollary~4.17]{Mondello}, we get the following result.

\begin{thm}[\cite{DeTh19, BenGol, Mondello}]\label{thm:compact-comp}
The $\alpha$-relative character variety $\Rep_\alpha(\SpherePk,\operatorname{PSL}_2\R)$ has a compact connected component if and only if $\sum_{p \in \mathcal{P}} \alpha_p$ is in one of these sets:
\begin{center}
\begin{tabular}{ll}
$2\pi(0, 1)$ & small angle \\
$2\pi(n-1, n)$ & large angle \\
$2\pi\{1, 2, \ldots, n-1\}$ & whole angle.
\end{tabular}
\end{center}
When it exists, the compact component is unique.
\end{thm}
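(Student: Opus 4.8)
The plan is to read the statement as a synthesis and to organize everything around one bookkeeping device, the relative Euler number. Fix a counterclockwise loop $c_p$ around each puncture $p$. For $\rho\in\Rep_\alpha(\SpherePk,\operatorname{PSL}_2\R)$, lift each $\rho(c_p)$ to the element of $\widetilde{\operatorname{PSL}_2\R}$ with rotation number $\alpha_p/2\pi\in(0,1)$; the product of these lifts is $T^{-e}$ for a generator $T$ of the center $\Z$ of $\widetilde{\operatorname{PSL}_2\R}$, and the integer $e=e(\rho)$ is locally constant, taking only the finitely many values allowed by the Milnor--Wood inequality. So $e$ cuts $\Rep_\alpha(\SpherePk,\operatorname{PSL}_2\R)$ into open-and-closed pieces $\Rep_\alpha^e$, which by Mondello's analysis are its connected components. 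The theorem then reduces to three tasks: (i) exhibit a compact $\Rep_\alpha^e$ in each of the three regimes; (ii) show every $\Rep_\alpha^e$ is non-compact in the complementary regime; and (iii) check that at most one piece is compact (and that it is connected).

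For task (i) in the \emph{small-angle} regime $\sum_p\alpha_p\in 2\pi(0,1)$, I would quote Deroin--Tholozan: the piece with extremal $e$ is non-empty and, equipped with the Kähler structure refining the Goldman symplectic form, is biholomorphic to $\CP^{n-3}$, hence compact~\cite{DeTh19}---for $n=4$ this is the family of spheres seen by Benedetto--Goldman~\cite{BenGol}. The \emph{large-angle} regime reduces to this via the involution $\rho\mapsto\rho^j$, $\rho^j(\gamma)=j\rho(\gamma)j^{-1}$, where $j$ is an orientation-reversing isometry of $\HH$: conjugating by $j$ reverses the sense of rotation, so it turns a counterclockwise elliptic of angle $\alpha$ into one of angle $2\pi-\alpha$ and gives a homeomorphism $\Rep_\alpha(\SpherePk,\operatorname{PSL}_2\R)\to\Rep_{2\pi\mathbf{1}-\alpha}(\SpherePk,\operatorname{PSL}_2\R)$; since $\sum_p(2\pi-\alpha_p)=2\pi n-\sum_p\alpha_p$, this swaps the large-angle condition for $\alpha$ with the small-angle condition for $2\pi\mathbf{1}-\alpha$. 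For the \emph{whole-angle} regime $\sum_p\alpha_p\in 2\pi\{1,\dots,n-1\}$, I would point to the reducible representation $\rho_0$ that sends every $c_p$ to the rotation of angle $\alpha_p$ about one common point of $\HH$: its holonomies commute and their product is the rotation of angle $\sum_p\alpha_p\in 2\pi\Z$, that is, the identity, so $\rho_0$ lies in $\Rep_\alpha(\SpherePk,\operatorname{PSL}_2\R)$ exactly in this regime. Near $[\rho_0]$ the character variety is modeled on the symplectic reduction of $H^1_{\mathrm{par}}(\pi_1\SpherePk;\mathfrak{sl}_2\R)$, with coefficients twisted by $\operatorname{Ad}\circ\rho_0$, by the residual $\operatorname{PSO}(2)$-action; the $\mathfrak{so}(2)$-directions are frozen by the angle constraints, while each $\rho_0(c_p)$ acts on the complementary $\C$-summand without fixed vectors, so an Euler-characteristic count gives $H^1_{\mathrm{par}}\cong\C^{n-2}$ and the model is $\C^{n-2}/\!/\operatorname{PSO}(2)\cong\CP^{n-3}$; one then checks that the whole component through $[\rho_0]$ is this compact $\CP^{n-3}$---the point at which I would defer to Mondello rather than reprove it.

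Tasks (ii) and (iii) are where Mondello's theorem \cite[Corollary~4.17]{Mondello} does the heavy lifting: it describes each $\Rep_\alpha^e$ up to homeomorphism and records which one, if any, is compact. In the complementary regime $\sum_p\alpha_p/2\pi\in(1,2)\cup\dots\cup(n-2,n-1)$ his description presents every non-empty piece as homotopy equivalent to a manifestly non-compact space; concretely, the mechanism is that each such piece contains an irreducible representation with hyperbolic holonomy along some curve of a pants decomposition of $\SpherePk$, and the earthquake flow along that curve runs off to infinity inside the piece. Uniqueness follows at once: in the small- and large-angle regimes the compact piece is the one with extremal $e$, necessarily unique; in the whole-angle regime it is the piece through $[\rho_0]$, while the other non-empty pieces have non-extremal $e$ and are non-compact by the same earthquake argument.

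The obstacle I expect to dominate the write-up is not geometric but notational: Deroin--Tholozan, Benedetto--Goldman, and Mondello each normalize the relative Euler (or Toledo) number differently, and these have to be reconciled precisely enough for Mondello's compactness criterion to reassemble into exactly the small/large/whole trichotomy. The delicate case within this is the whole-angle regime---it is not covered by the Deroin--Tholozan construction, it sits at or between the boundaries of the two open regimes, and its compact component is anchored at the reducible, hence singular, point $[\rho_0]$; verifying that this component is closed in $\Rep_\alpha(\SpherePk,\operatorname{PSL}_2\R)$ and is a single component, rather than a slice of a larger non-compact stratum, is exactly what Mondello's theorem must be made to supply.
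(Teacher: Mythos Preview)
The paper does not give its own proof of this theorem; it is stated as background, assembled from the cited references with a single sentence of justification preceding it. Your proposal goes further than the paper does, organizing the argument around the relative Euler number and indicating how each regime arises from the literature. That framework is correct and is essentially the intended synthesis.

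Your treatment of the whole-angle case, however, contains a genuine gap and reaches a conclusion at odds with the paper. You assert that the local model at $[\rho_0]$ is $\C^{n-2}/\!/\operatorname{PSO}(2)\cong\CP^{n-3}$. But this reduction must be taken at the moment-map level of the origin, since $\rho_0$ corresponds to $0$ in the slice $H^1_{\mathrm{par}}\cong\C^{n-2}$, and for the circle acting diagonally on $\C^{n-2}$ one has $\C^{n-2}/\!/_{0}S^1=\{0\}$, not $\CP^{n-3}$. More fundamentally, you have computed only the Zariski tangent space and tacitly assumed the deformations are unobstructed; at $\rho_0$ the differential of the product map $(g_1,\ldots,g_n)\mapsto g_1\cdots g_n$, restricted to the product of conjugacy classes, lands in the $\mathfrak{p}$-summand and misses the $\mathfrak{so}(2)$-direction, so $\Rep_\alpha$ is singular there and the tangent cone is strictly smaller than $H^1_{\mathrm{par}}$. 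The paper records, immediately after the theorem, that the compact component in the whole-angle case is the isolated point $\{[\rho_0]\}$. One can verify this directly at least for $\sum_p\alpha_p=2\pi$: the rotation angle of a product of two counterclockwise rotations by angles $\alpha,\beta$ with $\alpha+\beta<2\pi$ is at least $\alpha+\beta$, with equality only when the centers coincide; iterating this along the relation $c_1\cdots c_n=1$ forces all fixed points to agree, so $\rho_0$ admits no non-trivial deformation in $\Rep_\alpha$ and $\{[\rho_0]\}$ is already open and closed.
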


In the ``whole angle'' case, the compact component is an isolated point in $\Rep_\alpha(\SpherePk,\operatorname{PSL}_2\R)$: the conjugacy class where all elements of $\pi_1\SpherePk$ go to rotations around the same point in the hyperbolic plane $\HH$.

In the more interesting ``small angle'' and ``large angle'' cases, the compact component is denoted by $\RepDT{\alpha}(\SpherePk)$, and its elements are called {\em Deroin--Tholozan representations}---or {\em DT representations}, for short.\footnote{\emph{Supra-maximal} is another name for these representations that can also be found in the literature.} Deroin and Tholozan proved that $\RepDT{\alpha}(\SpherePk)$ is isomorphic, as a symplectic toric manifold, to $\CP^{n-3}$. DT representations are {\em totally elliptic}: they map every simple closed curve of $\SpherePk$ to an elliptic element of $\operatorname{PSL}_2\R$~\cite[Lemma~3.5]{DeTh19}. They are always Zariski dense and generically faithful, but almost never discrete. 

Building on the work of Deroin--Tholozan, Maret described explicit, global action-angle coordinates for $\RepDT{\alpha}(\SpherePk)$~\cite{action-angle}. The coordinate system depends on a choice of {\em geometric presentation} of $\pi_1 \SpherePk$ and a compatible choice of pants decomposition of $\SpherePk$.\footnote{We show an example of a geometric presentation in the proof of Proposition~\ref{prop:non-degenerate-pants-decomp}. The compatible pants decompositions are the ones obtained by playing the game described later in the proof, without worrying about trying to win.} A geometric presentation of $\pi_1 \SpherePk$ is one with generators $c_1,\ldots,c_n$ satisfying the relation $c_1\cdots c_n =1$, where each $c_i$ is the homotopy class of a loop around a different puncture. The \emph{standard pants decomposition} associated with a geometric presentation of $\pi_1\SpherePk$---the only one considered in~\cite{action-angle}---is the one whose pants curves are represented by the fundamental group elements $b_1, \ldots, b_{n-3}$ given by $b_i = (c_1 c_2 \cdots c_i c_{i+1})^{-1}$.
\begin{center}
\vspace{2mm}
\begin{tikzpicture}[scale=1.1, decoration={
    markings,
    mark=at position 0.6 with {\arrow{>}}}]
  \draw[postaction={decorate}] (0,-.5) arc(-90:-270: .25 and .5) node[midway, left]{$c_1$};
  \draw[black!40] (0,.5) arc(90:-90: .25 and .5);
  \draw[apricot, postaction={decorate}] (2,.5) arc(90:270: .25 and .5) node[midway, left]{$b_1$};
  \draw[lightapricot] (2,.5) arc(90:-90: .25 and .5);
  \draw[apricot, postaction={decorate}] (4,.5) arc(90:270: .25 and .5) node[midway, left]{$b_2$};
  \draw[lightapricot] (4,.5) arc(90:-90: .25 and .5);
  \draw[apricot, postaction={decorate}] (6,.5) arc(90:270: .25 and .5) node[midway, left]{$b_3$};
  \draw[lightapricot] (6,.5) arc(90:-90: .25 and .5);
  \draw[postaction={decorate}] (8,.5) arc(90:270: .25 and .5) node[midway, left]{$c_6$};
  \draw (8,.5) arc(90:-90: .25 and .5);
  
  \draw (.5,1) arc(180:0: .5 and .25) node[midway, above]{$c_2$};
  \draw[postaction={decorate}] (.5,1) arc(-180:0: .5 and .25);
  \draw (2.5,1) arc(180:0: .5 and .25)node[midway, above]{$c_3$};
  \draw[postaction={decorate}] (2.5,1) arc(-180:0: .5 and .25);
  \draw (4.5,1) arc(180:0: .5 and .25)node[midway, above]{$c_4$};
  \draw[postaction={decorate}] (4.5,1) arc(-180:0: .5 and .25);
  \draw (6.5,1) arc(180:0: .5 and .25)node[midway, above]{$c_5$};
  \draw[postaction={decorate}] (6.5,1) arc(-180:0: .5 and .25);
   
  \draw (0,.5) to[out=0,in=-90] (.5,1);
  \draw (1.5,1) to[out=-90,in=180] (2,.5);
  \draw (0,-.5) to[out=0,in=180] (2,-.5);
  
  \draw (2,.5) to[out=0,in=-90] (2.5,1);
  \draw (3.5,1) to[out=-90,in=180] (4,.5);
  \draw (2,-.5) to[out=0,in=180] (4,-.5);
  
  \draw (4,.5) to[out=0,in=-90] (4.5,1);
  \draw (5.5,1) to[out=-90,in=180] (6,.5);
  \draw (4,-.5) to[out=0,in=180] (6,-.5);
  
  \draw (6,.5) to[out=0,in=-90] (6.5,1);
  \draw (7.5,1) to[out=-90,in=180] (8,.5);
  \draw (6,-.5) to[out=0,in=180] (8,-.5);
\end{tikzpicture}
\vspace{2mm}
\end{center}

To find the coordinates of a representation $\rho \maps \pi_1\SpherePk \to \operatorname{PSL}_2 \R$, we start like this:
\begin{enumerate}[resume]
    \item Since DT representations are totally elliptic, $\rho(c_1), \ldots,\rho(c_n)$ and $\rho(b_1),\ldots,\rho(b_{n-3})$ act on $\HH$ by rotation. Consider their fixed points points $C_1,\ldots,C_n$ and $B_1,\ldots,B_{n-3}$, keeping in mind that some---but not all---might coincide.
    \item Each pair of pants is bounded by three curves, which are either pants curves or loops around punctures. Connect the fixed points that come from these three curves with geodesic segments. This gives a chain of $n-2$ hyperbolic triangles---one for each pair of pants. When two pairs of pants meet at a pants curve, the corresponding triangles share a vertex: the fixed point that comes from the common pants curve. The shared vertices are $B_1,\ldots,B_{n-3}$.
\end{enumerate}
\begin{center}
\begin{tikzpicture}[font=\sffamily,decoration={
    markings,
    mark=at position 1 with {\arrow{>}}}]]
    
\node[anchor=south west,inner sep=0] at (0,0) {\includegraphics[width=9cm]{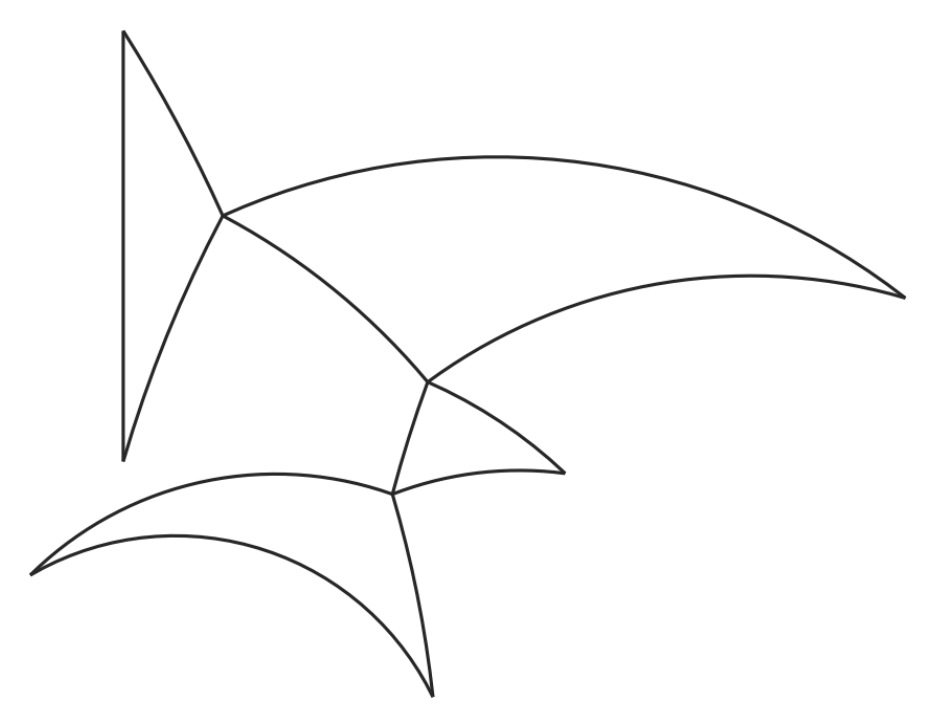}};

\begin{scope}
\fill (1.19,2.53) circle (0.07) node[left]{$C_1$};
\fill (1.19,6.62) circle (0.07) node[left]{$C_2$};
\fill (8.65,4.08) circle (0.07) node[below right]{$C_3$};
\fill (5.4,2.4) circle (0.07) node[below right]{$C_4$};
\fill (4.14,0.28) circle (0.07) node[below right]{$C_5$};
\fill (.31,1.43) circle (0.07) node[left]{$C_6$};
\end{scope}

\begin{scope}[apricot]
\fill (2.15,4.86) circle (0.07);
\fill (2.25,5.2) node{$B_1$};
\fill (4.1,3.27) circle (0.07) node[left]{$B_2$};
\fill (3.76,2.2) circle (0.07);
\fill (4.05,2.05) node{$B_3$};
\end{scope}
\end{tikzpicture}
\end{center}

We have used our pants decomposition to turn $\rho$ into a {\em triangle chain}. By construction, a triangle chain always has the following important geometric features~\cite[Corollary~3.6]{action-angle}:
\begin{itemize}
    \item The two interior angles at a shared vertex (which corresponds to a pants curve) sum to~$\pi$.
\end{itemize}
The next features are specific to the ``large angle'' case: 
\begin{itemize}
    \item The {\em terminal} triangles---the ones at the start and end of the chain---have vertex triples $(C_1, C_2, B_1)$ and $(B_{n-3},C_{n-1},C_n)$, respectively. These triples are ordered clockwise.
    \item The {\em intermediate} triangles---that is, all the others---have vertex triples $(B_i,C_{i+2},B_{i+1})$ for $i \in \{1,\ldots,n-4\}$. These triples are also ordered clockwise.
    \item The interior angle at a non-shared vertex (which corresponds to a loop around a puncture $p$) is $\pi-\alpha_p/2$.
\end{itemize}
In the ``small angle'' case, the same triples are ordered counterclockwise and the angle at a non-shared vertex is $\alpha_p/2$.

The action-angle coordinates of $\rho$ can now be obtained by measuring some angles in the triangle chain. At the fixed point $B_i$ of $\rho(b_i)$, which joins two consecutive triangles in the chain, we measure an action coordinate $\beta_i \in (0, 2\pi)$ and an angle coordinate $\gamma_i \in \R/2\pi\Z$. We get the action coordinates $\beta=(\beta_1,\ldots,\beta_{n-3})$ by writing the first interior angle adjacent to $B_i$ as $\pi-\beta_i/2$. That means the other interior angle adjacent to $B_i$ is $\beta_i/2$. Each of the angle coordinates $\gamma=(\gamma_1,\ldots,\gamma_{n-3})$ is the angle $\gamma_i$ between the two triangles that meet $B_i$.\footnote{To be precise, $\gamma_k$ is the angle $\angle(C_{i+1}, B_i, C_{i+2})$.}
\begin{center}
\begin{tikzpicture}[font=\sffamily,decoration={markings, mark=at position 1 with {\arrow{>}}}]
\node[anchor=south west,inner sep=0] at (0,0) {\includegraphics[width=9cm]{fig/fig-triangles-black}};

\begin{scope}
\fill (1.19,2.53) circle (0.07) node[left]{$C_1$};
\fill (1.19,6.62) circle (0.07) node[left]{$C_2$};
\fill (8.65,4.08) circle (0.07) node[below right]{$C_3$};
\fill (5.4,2.4) circle (0.07) node[below right]{$C_4$};
\fill (4.14,0.28) circle (0.07) node[below right]{$C_5$};
\fill (.31,1.43) circle (0.07) node[left]{$C_6$};
\end{scope}

\begin{scope}[apricot]
\fill (2.15,4.86) circle (0.07);
\fill (2.25,5.2) node{$B_1$};
\fill (4.1,3.27) circle (0.07) node[left]{$B_2$};
\fill (3.76,2.2) circle (0.07);
\fill (4.05,2.05) node{$B_3$};
\end{scope}

\draw[thick, postaction={decorate}, sky] (2.8,5.1) arc (10:115:.7) node[near end, above right]{$\gamma_1$};
\draw[thick, postaction={decorate}, sky] (4.7,3) arc (-20:32:.7) node[near start, above right]{$\gamma_2$};
\draw[thick, postaction={decorate}, sky] (3.95,1.6) arc (-80:15:.6) node[near end, below right]{$\gamma_3$};

\draw[thick, apricot] (1.9,5.3) arc (109:250:.5);
\draw[apricot] (2.4,4.2) node{\tiny $\pi-\beta_1/2$};
\draw[thick, apricot] (4.5,3.55) arc (25:129:.5) node[midway, above]{\small $\pi-\beta_2/2$};
\draw[thick, apricot] (4.25,2.35) arc (5:78:.4) node[at end, left]{\small $\pi-\beta_3/2$};
\end{tikzpicture}
\end{center}

The triangles in a chain can overlap; this causes no inconvenience. If we want to, however, we can get rid of overlaps between adjacent triangles by changing the geometric presentation of $\pi_1\SpherePk$ while keeping the same pants decomposition. This is done by applying Dehn twists along the pants curves to the generators $c_1,\ldots,c_n$, as explained in Proposition~\ref{prop:no-overlap}.

Triangles can also degenerate to single points; this must be handled more carefully. Fortunately, it only happens in rare cases: for each pants decomposition $\mathcal B$ that can be used to construct triangle chains, there's a dense open subset $\IntRepDT{\alpha}{\mathcal B}(\SpherePk)\subset \RepDT{\alpha}(\SpherePk)$ consisting of representations whose triangle chains are free of degenerate triangles. We can avoid degenerate triangles by choosing a different pants decomposition compatible with the same geometric presentation of $\pi_1\SpherePk$, as explained in Proposition~\ref{prop:non-degenerate-pants-decomp}. We can then switch the generating set such that the new pants decomposition is the standard one, as described in Proposition~\ref{prop:non-degenerate-standard-pants-decomp}.

\begin{cor}[{\cite[Theorem~A]{action-angle}}]\label{cor:action-angle}
The coordinates $(\beta,\gamma)$ completely parameterize $\IntRepDT{\alpha}{\mathcal B}(\SpherePk)$, turning it into a Lagrangian torus bundle over the interior of a polytope\footnote{The precise inequalities that describe $\Delta$ only depend on $\alpha$ and are written down in~\cite[Equation~3.11]{action-angle}. See also the discussion after Definition~\ref{defn:samosa-assembly}.} $\Delta\subset\R^{n-3}$:
\[
(\beta,\gamma)\colon \IntRepDT{\alpha}{\mathcal B}(\SpherePk) \overset{\cong}{\longrightarrow} \overset{\circ}{\Delta}\times (\R/2\pi\Z)^{n-3}.
\]
\end{cor}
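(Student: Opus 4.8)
The plan is to prove this (which is \cite[Theorem~A]{action-angle}) by exhibiting an explicit inverse to $(\beta,\gamma)$: a \emph{reconstruction map} that builds a triangle chain, and hence a DT representation, out of an action tuple $\beta\in\overset{\circ}{\Delta}$ and an angle tuple $\gamma\in(\R/2\pi\Z)^{n-3}$. I fix the ``large angle'' case; the ``small angle'' case is symmetric, with orientations reversed. First I would build the terminal triangle $\Delta_1$ with vertices $C_1,C_2,B_1$ and prescribed interior angles $\pi-\alpha_{p_1}/2$ at $C_1$, $\pi-\alpha_{p_2}/2$ at $C_2$, and $\pi-\beta_1/2$ at $B_1$ (writing $\alpha_{p_j}$ for the rotation angle prescribed at the puncture encircled by $c_j$), placed in $\HH$ in a standard position that kills the conjugation freedom. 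A hyperbolic triangle with three given angles exists and is unique up to isometry exactly when the angles are positive and sum to less than $\pi$; positivity is automatic from $\alpha_p,\beta_i\in(0,2\pi)$, and the angle-sum condition is one of the defining inequalities of $\Delta$. I would then inductively attach the intermediate triangles $\Delta_{i+1}$ with vertices $B_i,C_{i+2},B_{i+1}$, giving $\Delta_{i+1}$ the interior angle $\beta_i/2$ at $B_i$ --- complementary to that of $\Delta_i$, so the shared-vertex condition holds by construction --- the angle $\pi-\alpha_{p_{i+2}}/2$ at $C_{i+2}$, and $\pi-\beta_{i+1}/2$ at $B_{i+1}$, rotating $\Delta_{i+1}$ about $B_i$ by the angle $\gamma_i$ before gluing; the last terminal triangle $\Delta_{n-2}$ with vertices $B_{n-3},C_{n-1},C_n$ is attached the same way, and its angle-sum constraint is the final defining inequality of $\Delta$. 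This produces a well-defined triangle chain, canonically placed in $\HH$, for every $(\beta,\gamma)$ with $\beta\in\overset{\circ}{\Delta}$, and matching these angle-sum inequalities against the inequalities recorded in \cite[Equation~3.11]{action-angle} is a bookkeeping exercise.

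Next I would turn the chain into a representation by setting $\rho(c_j)$ equal to the rotation of $\HH$ about $C_j$ with rotation angle $\alpha_{p_j}$, oriented compatibly with the clockwise vertex ordering. The essential point is to check that $\rho(c_1)\cdots\rho(c_n)=\id$, so that $\rho$ descends to a representation of $\pi_1\SpherePk$, and then that $\rho(b_i)$, defined through $b_i=(c_1\cdots c_{i+1})^{-1}$, is the rotation about $B_i$ by $\beta_i$. Both follow from the classical identity that the product of the three rotations by twice the interior angles around a hyperbolic triangle --- taken in the cyclic order of its vertices and with the correct orientation --- is the identity, together with the fact that at each shared vertex $B_i$ the two interior angles sum to $\pi$, so the rotations contributed at $B_i$ by $\Delta_i$ and $\Delta_{i+1}$ (by $2\pi-\beta_i$ and by $\beta_i$) are inverse to one another: multiplying the triangle relations of $\Delta_1,\dots,\Delta_{n-2}$ in order, these paired rotations telescope away and leave precisely $\rho(c_1)\cdots\rho(c_n)$. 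One then checks $\rho$ has elliptic holonomy of the prescribed rotation angle around each puncture, lies in the compact component $\RepDT{\alpha}(\SpherePk)$ (by connectedness of the parameter space together with one explicitly identifiable DT representation in the image, and the total-ellipticity of triangle-chain holonomies), and has a triangle chain free of degenerate triangles, so $\rho\in\IntRepDT{\alpha}{\mathcal B}(\SpherePk)$. Comparing this construction with the procedure that reads $(\beta,\gamma)$ off a representation shows the reconstruction map is a two-sided inverse of $(\beta,\gamma)$ up to conjugacy, so $(\beta,\gamma)$ is a bijection onto $\overset{\circ}{\Delta}\times(\R/2\pi\Z)^{n-3}$.

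Finally, to obtain the Lagrangian torus bundle structure, I would compare the Atiyah--Bott--Goldman symplectic form on $\RepDT{\alpha}(\SpherePk)$ with the standard form $\sum_i d\beta_i\wedge d\gamma_i$, up to a universal constant. Since each $b_i$ is a simple closed curve whose $\rho$-image is elliptic, $\beta_i$ plays the role of a Fenchel--Nielsen length coordinate (the rotation angle of $\rho(b_i)$) and $\gamma_i$ is the associated twist coordinate recording the gluing along the pants curve $b_i$; Goldman's twist--length formula for the symplectic form, in its version for elliptic pants curves, shows that $(\beta,\gamma)$ are Darboux coordinates. Consequently the projection $(\beta,\gamma)\mapsto\beta$ is a submersion onto $\overset{\circ}{\Delta}$ whose fibers $\{\beta=\mathrm{const}\}$ are the tori $(\R/2\pi\Z)^{n-3}$, and they are Lagrangian because $\omega=\sum_i d\beta_i\wedge d\gamma_i$ restricts to zero on them --- exactly the asserted torus bundle over $\overset{\circ}{\Delta}$. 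An alternative to the direct computation is to invoke Deroin--Tholozan's symplectomorphism $\RepDT{\alpha}(\SpherePk)\cong\CP^{n-3}$ and identify $(\beta,\gamma)$ with the action--angle coordinates of the induced toric structure, recovering $\Delta$ as its Delzant simplex. I expect the surface-relation verification in the second paragraph to be the delicate step: getting the orientations and the order of the rotations consistent throughout the chain, and reconciling them with the ``clockwise'' vertex orderings, is where sign errors hide.
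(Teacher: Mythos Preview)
The paper does not prove this statement at all: it is simply a citation of \cite[Theorem~A]{action-angle}, stated without proof and followed only by a remark that Theorem~A extends to the degenerate locus and that Theorem~B promotes the bijection to a symplectomorphism with $\CP^{n-3}$. So there is no ``paper's own proof'' to compare against here.

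Your sketch is a reasonable reconstruction of how the cited result is actually proved, and it tracks the discussion in Section~\ref{sec:dt-review} of this paper closely: build the chain triangle by triangle from the prescribed angles, use the classical product-of-rotations identity for a hyperbolic triangle to verify the surface relation by telescoping at the shared vertices, and then identify $(\beta,\gamma)$ with Fenchel--Nielsen-type Darboux coordinates via Goldman's twist flow. The step you flag as delicate---keeping the orientations and cyclic orders consistent so that the telescoping really yields $\rho(c_1)\cdots\rho(c_n)=\id$---is indeed where the work is, and your argument that the image lands in the compact component (connectedness plus one identifiable DT point) is the standard one. Nothing in your outline looks wrong, but since the present paper treats Corollary~\ref{cor:action-angle} as background, the right comparison is with \cite{action-angle} itself rather than with anything here.
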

Theorem~A in \cite{action-angle} shows that $(\beta,\gamma)$ extend to global action-angle coordinates for $\RepDT{\alpha}(\SpherePk)$. This involves extending the definitions of $\beta$ and $\gamma$ to the cases where triangles degenerate, allowing $\beta$ to take values on the boundary of $\Delta$. One can then promote the isomorphism from Corollary~\ref{cor:action-angle} to a symplectomorphism $\RepDT{\alpha}(\SpherePk) \to \CP^{n-3}$~\cite[Theorem~B]{action-angle}.

Changing the geometric presentation of $\pi_1\SpherePk$, as we'd do to get rid of overlaps between adjacent triangles, gives us a new but equally good action-angle coordinate system on $\RepDT{\alpha}(\SpherePk)$, which enjoys all the properties described in~\cite{action-angle}. Changing the pants decomposition, as we'd do to avoid degenerate triangles, should also give us a new action-angle coordinate system, good enough for our purposes in this paper. We expect most of the arguments in~\cite{action-angle} to generalize to all compatible pants decompositions.

\section{Parameterizing hyperbolic cone surfaces}\label{chap:param-hyp-cone-metrics}
\subsection{The space of hyperbolic cone metrics on a smooth sphere}\label{sec:space-of-hyperbolic-cone-metrics}
In Section~\ref{sec:dt-review}, we chose an oriented topological sphere $\SpherePk$ with a finite set $\mathcal P$ of $n\geq 3$ punctures. Add $n-3$ extra punctures, making a new punctured sphere $\SphereExtPk$ with an inclusion $\SphereExtPk \hookrightarrow \SpherePk$. It'll often be helpful to fill in the punctures, turning them into marked points. That gives us a forgetful map $\SphereExtMk \to \SphereMk$ of spheres with marked points, which erases the $n-3$ extra marked points on $\SphereExtMk$. Give $\SphereMk$ a smooth structure, which the other spheres inherit. We'll refer to the smooth versions of the spheres as $\SphereMkSmooth, \SphereExtMkSmooth, \SpherePkSmooth, \SphereExtPkSmooth$.

Fix a tuple of angle defects $\alpha \in (0,2\pi)^\mathcal{P}$ with
\begin{equation}\label{cond:defect-sum}
\sum_{p\in\mathcal P}\alpha_p > 2\pi(n-1).
\end{equation}
In other words, choose $\alpha$ from the ``large angle'' case of Theorem~\ref{thm:compact-comp}. Let $\HypCone{\alpha}(\SphereExtMkSmooth)$ be the space of hyperbolic cone metrics on $\SphereExtMkSmooth$, up to smooth isotopy, with cone angles $2\pi-\alpha_p$ at each of the original marked point $p\in\mathcal P$ and $4\pi$ at the $n-3$ extra marked points. We'll call these the {\em fractional} and {\em whole} singularities, respectively. Given a hyperbolic cone metric $h$ on $\SphereExtMkSmooth$, we'll use $[h]$ to denote its isotopy class, which is a point in $\HypCone{\alpha}(\SphereExtMkSmooth)$. A hyperbolic cone metric on $\SphereExtMkSmooth$ induces a \emph{branched hyperbolic structure}, as defined for instance in~\cite{Far21}, away from the fractional singularities.

Consider the smooth universal coverings $\SphereUniSmooth \to \SpherePkSmooth$ and $\SphereExtUniSmooth \to \SphereExtPkSmooth$. For each hyperbolic cone metric $h$ on $\SphereExtMkSmooth$, we can find a {\em developing map}: a local isometry $\dev_h \maps \SphereExtUniSmooth \to \HH$. The developing map is unique up to postcomposition by isometries of $\HH$, and it fully describes $h$. Precomposing $\dev_h$ by an isotopy of $\SphereExtMkSmooth$, lifted to $\SphereExtUniSmooth$, has the same effect as pulling $h$ back along that isotopy. With that in mind, consider all the developing maps of all the metrics representing elements of $\HypCone{\alpha}(\SphereExtMkSmooth)$. Postcomposition by isometries of $\HH$ and precomposition by isotopies of $\SphereExtMkSmooth$ lifted to $\SphereExtUniSmooth$ both act on this set, and the quotient can be identified with $\HypCone{\alpha}(\SphereExtMkSmooth)$. Following~\cite{cone-metric}, we give $\HypCone{\alpha}(\SphereExtMkSmooth)$ the quotient topology that comes from the $\mathcal{C}^\infty$ topology\footnote{ The \emph{compact-open $\mathcal{C}^\infty$ topology}, or in short \emph{$C^\infty$ topology}, on the set $\mathcal{C}^{\infty}(X,Y)$ of smooth maps between two smooth manifolds $X$ and $Y$ is generated by the following subbase. Let $f\in \mathcal{C}^{\infty}(X,Y)$, and let $(\varphi, U)$, $(\psi, V)$ be two charts on $X$ and $Y$. Let $K\subset U$ be a compact subset with $f(K)\subset V$, and let $\varepsilon >0$. Define a sub-basic neighbourhood of $f$ as the set of all $g\in \mathcal{C}^{\infty}(X,Y)$ such that $g(K)\subset V$ and such that the derivatives of $f$ and $g$ of every order are uniformly $\varepsilon$-close on $K$~\cite[Chap.~2]{hirsch-diff-topology}. The $\mathcal{C}^\infty$ topology on $\mathcal{C}^{\infty}(X,Y)$ shouldn't be confused with the finer Whitney $\mathcal{C}^\infty$ topology.} on the space of smooth maps $\SphereExtUniSmooth \to \HH$.

Let $\Teich(\SphereExtMkSmooth)$ be the {\em Teichm\"{u}ller space} of $\SphereExtMkSmooth$: the space of conformal structures on $\SphereExtMkSmooth$ up to isotopy. By taking the conformal class of a hyperbolic cone metric on $\SphereExtMkSmooth$ and then erasing the extra marked points, we get maps
\[ \HypCone{\alpha}(\SphereExtMkSmooth) \overset{\cong}{\longrightarrow} \Teich(\SphereExtMkSmooth) \longrightarrow \Teich(\SphereMkSmooth). \]
The invertibility of the first map is a consequence of Troyanov's theorem, via Corollary~\ref{cor:troyanov}.  The hypothesis of the corollary is Condition~\eqref{cond:defect-sum}, rewritten as $-4\pi + \sum_{p\in\mathcal{P}}\alpha_p - 2\pi(n-3)> 0$, and the conclusion is that each conformal structure on $\SphereExtMkSmooth$ comes from a unique cone metric representing an element of $\HypCone{\alpha}(\SphereExtMkSmooth)$. Further argument shows that $\HypCone{\alpha}(\SphereExtMkSmooth) \to\Teich(\SphereExtMkSmooth)$ is actually a homeomorphism~\cite[Theorem~3.2.8 and preceding paragraph]{cone-metric}.

By allowing the whole singularities to coalesce with the fractional singularities and each other, we can extend $\HypCone{\alpha}(\SphereExtMkSmooth)$ to a larger space $\AugHypCone{\alpha}(\SphereExtMkSmooth)$. Since we keep the fractional singularities away from each other, we still have a projection
\[ \AugHypCone{\alpha}(\SphereExtMkSmooth) \overset{\fconf}{\longrightarrow} \Teich(\SphereMkSmooth). \]
Like before, we topologize $\AugHypCone{\alpha}(\SphereExtMkSmooth)$ using the $\mathcal{C}^\infty$ topology on the space of developing maps, making the inclusion $\HypCone{\alpha}(\SphereExtMkSmooth) \hookrightarrow \AugHypCone{\alpha}(\SphereExtMkSmooth)$ continuous. Using Troyanov's theorem, we can identify each fiber of the map $\tau$ with $\Sym_{n-3}(\CP^1)$---the configuration space of $n-3$ points on $\CP^1$.\footnote{\label{foot:bundle} This identification is not canonical, because it involves choosing a Riemann surface isomorphism $\SphereMkSmooth \to \CP^1$ for each complex structure on $\SphereMkSmooth$. It should be possible to do the identification continuously across fibers, turning $\AugHypCone{\alpha}(\SphereExtMkSmooth)$ into a trivial topological bundle over $\Teich(\SphereMkSmooth)$ whose fibers are homeomorphic to $\Sym_{n-3}(\CP^1)$. However, we won't try to prove this.} It turns out that $\Sym_{n-3}(\CP^1)$ is a complex manifold biholomorphic to $\CP^{n-3}$.
                        
Each developing map is equivariant with respect to a unique representation $\pi_1\SphereExtPk \to \psl$, called the {\em holonomy} of $h$. To be precise, the holonomy is the representation $\rho_h$ defined by the following property: for each path $\widetilde{\zeta}$ on $\SphereExtUniSmooth$ that projects to an element $\zeta$ of $\pi_1\SphereExtPk$, the isometry $\rho_h(\zeta)$ sends the end point of $\dev_h \circ \widetilde{\zeta}$ to the start point. The holonomy around each whole singularity of $h$ is the identity, so the holonomy factors through the projection $\pi_1\SphereExtPk \to \pi_1\SpherePk$ to become a representation $\pi_1\SpherePk\to \psl$, which we also refer to as the holonomy of $h$. Different choices of developing map give the same holonomy up to conjugacy, so we can think of taking the holonomy as map $\HypCone{\alpha}(\SphereExtMkSmooth) \to \Rep_{\alpha}(\SpherePk, \psl)$. Its image, as Deroin and Tholozan show, falls within $\RepDT{\alpha}(\SpherePk)$~\cite[\S 4]{DeTh19}. Nothing goes wrong when the whole singularities coalesce with the fractional singularities or each other, so we end up with a map
\[ \AugHypCone{\alpha}(\SphereExtMkSmooth) \overset{\hol}{\longrightarrow} \RepDT{\alpha}(\SpherePk). \]
Together, the maps $\fconf$ and $\hol$ turn out to express $\AugHypCone{\alpha}(\SphereExtMkSmooth)$ as a product of sets~\cite[Equation~14 and Corollary~4.6]{DeTh19}:
\[ \AugHypCone{\alpha}(\SphereExtMkSmooth) \overset{\cong}{\longrightarrow} \Teich(\SphereMkSmooth) \times \RepDT{\alpha}(\SpherePk). \]
If the topological claim in footnote~\ref{foot:bundle} is true, then this bijection is actually a homeomorphism.

Our goal, as described in Section~\ref{sec:motivation}, is to construct a simple family of hyperbolic cone spheres whose holonomies can realize any DT representation. Our construction charts part of the space of hyperbolic cone metrics with coordinates that relate directly to Maret's action-angle coordinates.
\subsection{The space of hyperbolic cone structures on a topological sphere}\label{sec:hyperbolic-cone-structures}
In Section~\ref{sec:space-of-hyperbolic-cone-metrics}, we introduced the smooth structure $\SphereExtMkSmooth$ so that we could look at hyperbolic cone spheres from a Riemannian point of view---the perspective used most often in the literature. However, this smooth structure will be irrelevant to most of our reasoning, and the $\mathcal{C}^\infty$ topology on $\HypCone{\alpha}(\SphereExtMkSmooth)$ will be inconvenient to work with. For this reason, we will state and prove most of our results in terms of {\em hyperbolic cone structures} on the topological sphere $\SphereExtMk$, which form a space $\HypCone{\alpha}(\SphereExtMk)$ with a coarser and more convenient topology.
\subsubsection{Hyperbolic cone structures on topological surfaces}
Recall from Definition~\ref{defn:cone-metric} that for smooth cone metrics, the prototypical cone point of angle defect $\nu_0 < 2\pi$ is the origin in $\C$ with the Riemannian metric $|z^{-\nu_0/2\pi}\,dz|^2$.
\begin{defn}\label{defn:hyperbolic-cone-structure}
We define a {\em hyperbolic cone structure} on a topological surface as a maximal atlas of charts that each map an open subset of the surface to an open subset of~$\HH$ or a neighborhood of a prototypical cone point, subject to the condition that the transition maps between charts must be isometries.
\end{defn}
A hyperbolic cone structure gives a length metric on the domain of every chart~\cite[Chapter~I.3]{metric-non-pos}. Since these local metrics agree on overlaps, we can use them to measure arc lengths of paths, inducing a global length metric on each connected component of the surface. Since the transition maps are smooth, a hyperbolic cone structure on a topological surface $X$ also induces a smooth structure and a hyperbolic cone metric on $X$. This means that we can alternatively think of a hyperbolic cone structure on $X$ as an equivalence class of homeomorphisms from $X$ to hyperbolic cone spheres, where two homeomorphisms are equivalent if they differ by an isometry of hyperbolic cone spheres.

Consider a topological surface $\SurfMk$ with a hyperbolic cone structure. Let $\SurfPk \subset \SurfMk$ be the complement of the cone points, and let $\SurfUni \to \SurfPk$ be its universal covering. We can always find a local isometry $\SurfUni \to \HH$, which is called a {\em developing map} for the hyperbolic cone structure on $\SurfMk$. The developing map is unique up to postcomposition by isometries of $\HH$, and it fully describes the hyperbolic cone structure. It's equivariant with respect to a unique representation $\pi_1 \SurfPk \to \psl$, called the {\em holonomy} of the hyperbolic cone structure. If we use the induced smooth structure to think of $\SurfMk$ as a smooth surface with a hyperbolic cone metric, the topological developing maps discussed here are the same as the smooth developing maps discussed in Section~\ref{sec:space-of-hyperbolic-cone-metrics}.
\subsubsection{The space of hyperbolic cone structures}\label{sec:space-of-hyperbolic-cone-structures}
At the beginning of Section~\ref{sec:space-of-hyperbolic-cone-metrics}, we introduced an oriented topological sphere $\SphereExtMk$ with $2n - 3$ marked points: a set $\mathcal{P}$ of $n$ marked points and $n-3$ extra marked points. We also fixed a tuple of angle defects $\alpha \in (0,2\pi)^\mathcal{P}$ satisfying condition~\eqref{cond:defect-sum}. We define $\HypCone{\alpha}(\SphereExtMk)$ be to be the space of hyperbolic cone structures on $\SphereExtMk$, up to isotopy, with cone angles $2\pi-\alpha_p$ at each of the first $n$ marked points and $4\pi$ at the $n-3$ extra marked points. Given a hyperbolic cone structure $\HypConeStruct$ on $\SphereExtMk$, we'll use $[\HypConeStruct]$ to denote its isotopy class, which is a point of $\HypCone{\alpha}(\SphereExtMk)$.

Puncturing $\SphereExtMk$ at all $2n - 3$ of its marked points gives back the punctured sphere $\SphereExtPk$ introduced in Section~\ref{sec:space-of-hyperbolic-cone-metrics}. Like before, let $\SphereExtUni \to \SphereExtPk$ be the universal covering. Let $\mathcal{C}^0(\SphereExtUni, \HH)$ be the space of continuous maps $\SphereExtUni \to \HH$, equipped with the compact-open topology. Consider all the developing maps of all the hyperbolic cone structures representing elements of $\HypCone{\alpha}(\SphereExtMk)$. They form a subset $\mathcal{D} \subset \mathcal{C}^0(\SphereExtUni, \HH)$. Just like in the smooth case, postcomposition by isometries of $\HH$ and precomposition by isotopies of $\SphereExtMk$ lifted to $\SphereExtUni$ both act on $\mathcal{C}^0(\SphereExtUni, \HH)$, and the quotient of $\mathcal{D}$ by these actions can be identified with $\HypCone{\alpha}(\SphereExtMk)$. The quotient topology on $\mathcal{D}$ can thus be seen as a topology on $\HypCone{\alpha}(\SphereExtMk)$, which we'll call the {\em $\mathcal{C}^0$ topology}. Like we did for hyperbolic cone spheres, we can extend $\HypCone{\alpha}(\SphereExtMk)$ to a larger space $\AugHypCone{\alpha}(\SphereExtMk)$ by allowing whole singularities to coalesce with fractional singularities and each other. We topologize $\AugHypCone{\alpha}(\SphereExtMk)$ in the same way as $\HypCone{\alpha}(\SphereExtMk)$, using the $\mathcal{C}^0$ topology on the space of developing maps.

Recall from Section~\ref{sec:space-of-hyperbolic-cone-metrics} that $\SphereMk$ is the sphere we get from $\SphereExtMk$ by forgetting the $n-3$ extra marked points, and $\SpherePk$ is the punctured sphere we get from $\SphereMk$ by removing the marked points. We can define conformal structures on $\SphereMk$ in terms of homeomorphisms to conformal spheres with marked points, and we can think of hyperbolic cone structures on $\SphereExtMk$ in terms of homeomorphisms to hyperbolic cone spheres. Each hyperbolic cone sphere has the underlying structure of a conformal sphere with marked points, giving a continuous projection
\[
\AugHypCone{\alpha}(\SphereExtMk)\overset{\tau}{\longrightarrow}\Teich(\SphereMk)
\]
that forgets the extra marked points of $\SphereExtMk$. A hyperbolic cone structure on $\SphereExtMk$, like a hyperbolic cone metric on $\SphereExtMkSmooth$, comes with a holonomy representation, defining a continuous map
\[
\AugHypCone{\alpha}(\SphereExtMk)\overset{\hol}{\longrightarrow} \RepDT{\alpha}(\SpherePk).
\]
\subsubsection{Hyperbolic cone structures from hyperbolic cone metrics}
Every hyperbolic cone metric on $\SphereExtMkSmooth$ has a smooth developing map $\SphereExtMkSmooth \to \HH$, as discussed in Section~\ref{sec:space-of-hyperbolic-cone-metrics}. The developing map specifies a hyperbolic cone structure on the underlying topological sphere $\SphereExtMk$. This gives a map
\[ \HypCone{\alpha}(\SphereExtMkSmooth) \to \HypCone{\alpha}(\SphereExtMk), \]
which turns out to be a continuous inclusion. It's an inclusion---that is, an injective map---by the Myers-Steenrod theorem. It's continuous because each of the seminorms that defines the $\mathcal{C}^\infty$ topology can be expressed a $\mathcal{C}^0$ seminorm plus a series of seminorms of derivatives.

\begin{question}
Is this continuous inclusion a homeomorphism?
\end{question}

We can show that the inclusion is at least invertible as a map of sets. To invert it, we just need a way to take an isotopy class of homeomorphisms from $\SphereExtMk$ to a given hyperbolic cone sphere and upgrade it to an isotopy class of diffeomorphisms from $\SphereExtMkSmooth$ to the same hyperbolic cone sphere. There is indeed a way to do this, as explained for instance in~\cite[Section~1.4.2]{mcg-primer}. The question is whether the resulting inverse is continuous.

\subsection{Parameterization using triangulations}\label{sec:parametrization-using-triangulations}
Hyperbolic metrics on closed surfaces are determined by the geodesic lengths of finitely many simple closed curves. The minimal number of such curves for a closed surface of genus $g$ is $6g-5$ \cite{Sch93} (see also \cite[Theorem~10.7]{mcg-primer} for a parameterization with $9g-9$ simple closed curves). The construction in \cite{mcg-primer} generalizes to hyperbolic surfaces with cusps: a hyperbolic metric on a surface of genus $g$ with $m$ cusps is determined by the length of $9g-9+3m$ simple closed curves. Alternatively, Hamenst\"{a}dt showed that $6g-5+2m$ (non-closed) simple curves suffice \cite[Proposition~3.1]{Ham03}. In Hamenst\"{a}dt's construction, the curves are chosen to start and end in the same cusp and decompose the surface into ideal triangles and once-punctured disks.

Our study of $\HypCone{\alpha}(\SphereExtMkTop)$ requires an analogous parameterization of hyperbolic cone structures. Recall that a hyperbolic cone structure on $\SphereExtMkTop$ induces a smooth structure, turning $\SphereExtMkTop$ into a smooth sphere with a hyperbolic cone metric. Thus, in the presence of a hyperbolic cone structure, we can talk about geodesics on $\SphereExtMkTop$. However, we must keep in mind that a geodesic arc joining two cone points on a hyperbolic cone sphere might be {\em broken}, changing direction sharply where it passes through another cone point. To get around this problem, we produce local parameterizations of $\HypCone{\alpha}(\SphereExtMkTop)$ which are indexed by the triangulations of $\SphereExtMkTop$, rather than a global parameterization of the kind described earlier. We only consider triangulations where the set of vertices is the set of marked points of $\SphereExtMkTop$. Such a triangulation always consists of $4n-10$ triangles and $6n-15$ triangle sides.

Given a triangulation $\mathcal{T}$, we consider the subset $U_\mathcal{T}\subset \HypCone{\alpha}(\SphereExtMkTop)$ comprising all hyperbolic cone structures, up to isotopy, in which the edges of $\mathcal{T}$ are realized as unbroken geodesic arcs between cone points. Under each cone structure in $U_\mathcal{T}$, the faces of $\mathcal{T}$ are realized as non-degenerate hyperbolic triangles and lift to a geodesic triangulation $\widetilde{\mathcal{T}}$ of $\SphereExtUni$. Pick an edge of $\widetilde{\mathcal{T}}$ and map it isometrically to a hyperbolic segment into $\HH$. This map can be extended to a locally isometric developing map $\SphereExtUni\to \HH$. Unbroken geodesic segments remain unbroken under small deformations of the developing map in the space of continuous developing maps arising from hyperbolic cone structures on $\SphereExtMkTop$. This shows that $U_\mathcal{T}$ is an open subset of $\HypCone{\alpha}(\SphereExtMkTop)$. Every hyperbolic cone structure on $\SphereExtMkTop$ has a geodesic triangulation (see for instance~\cite[Proposition~3.1]{Thurston} for the analogue statement about Euclidean cone metrics on spheres), so the open sets $U_\mathcal T$ give an open covering of $\HypCone{\alpha}(\SphereExtMkTop)$.

Define a map $\varphi_\mathcal{T}\colon U_\mathcal{T}\to \R_{>0}^{6n-15}$ by sending an isotopy class of hyperbolic cone structures on $\SphereExtMkTop$ to the vector listing the lengths of the geodesic segments realizing the edges of $\mathcal T$. Since a hyperbolic triangle is uniquely determined, up to orientation, by the lengths of its sides, the map $\varphi_{\mathcal T}$ is injective.
\begin{prop}\label{length-homeo}
The maps $\varphi_{\mathcal{T}}$ are homeomorphisms onto their images.
\end{prop}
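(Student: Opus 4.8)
The plan is to prove the two one-sided continuities separately; injectivity of $\varphi_{\mathcal{T}}$ and the fact that its image carries the subspace topology from $\R_{>0}^{6n-15}$ are already in hand. It is worth noting at the outset that the image of $\varphi_{\mathcal{T}}$ is \emph{not} open in $\R_{>0}^{6n-15}$ --- its ``codimension'' is the number $2n-3$ of cone-angle constraints --- so there is no invariance-of-domain shortcut and both directions genuinely need an argument.

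\textbf{Continuity of $\varphi_{\mathcal{T}}^{-1}$ on the image.} I would reconstruct a developing map directly from a length vector $\ell$ in the image. Hyperbolic trigonometry turns each of the $4n-10$ faces of $\mathcal{T}$ into a concrete oriented hyperbolic triangle $f(\ell)\subset\HH$ depending continuously (indeed real-analytically) on the three corresponding coordinates of $\ell$; since each edge of $\mathcal{T}$ is shared by exactly two faces, to which $\ell$ assigns the same length, the two triangle models match along their common edge. Fix a lift of $\mathcal{T}$ to a geodesic triangulation $\widetilde{\mathcal{T}}$ of $\SphereExtUni$ and a base face $\tilde f_0$, and define $\dev_\ell$ inductively: place $\tilde f_0$ as $f_0(\ell)$, and whenever a face $\tilde f'$ is reached from an already-placed neighbour $\tilde f$ across a lifted edge, place it by post-composing with the unique orientation-compatible isometry of $\HH$ identifying the two copies of that edge model; this transition isometry is a continuous function of the relevant coordinates of $\ell$. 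Because $\ell$ lies in the image of $\varphi_{\mathcal{T}}$, the angle sums around the vertices are exactly $2\pi-\alpha_p$ or $4\pi$, so the construction is consistent and produces an honest developing map of some cone structure in $U_{\mathcal{T}}$, which by injectivity of $\varphi_{\mathcal{T}}$ must represent $\varphi_{\mathcal{T}}^{-1}(\ell)$. Any compact $K\subset\SphereExtUni$ meets only finitely many lifted faces, so $\ell\mapsto\dev_\ell|_K$ is a composition of finitely many $\ell$-continuous maps; hence $\ell\mapsto\dev_\ell$ is continuous into $\mathcal{C}^0(\SphereExtUni,\HH)$, and composing with the quotient map $\mathcal{D}\to\HypCone{\alpha}(\SphereExtMkTop)$ shows $\varphi_{\mathcal{T}}^{-1}$ is continuous.

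\textbf{Continuity of $\varphi_{\mathcal{T}}$.} Since $\mathrm{Isom}(\HH)$ and the lifted isotopies act on $\mathcal{C}^0(\SphereExtUni,\HH)$ by homeomorphisms, the quotient map $\mathcal{D}\to\HypCone{\alpha}(\SphereExtMkTop)$ is open, so from a convergent net $[\HypConeStruct_k]\to[\HypConeStruct]$ in $U_{\mathcal{T}}$ I may pass to developing maps $\dev_k\to\dev$ uniformly on compacta. Fix an edge $e$ of $\mathcal{T}$ and a lift $\tilde e\subset\widetilde{\mathcal{T}}$ with ends at cone points $v,v'$ (which are punctures, not points of $\SphereExtUni$); the length of the geodesic realizing $e$ under $\HypConeStruct$ equals $d_{\HH}(\dev(v),\dev(v'))$, where $\dev(v),\dev(v')$ denote the endpoints of the closed geodesic arc $\overline{\dev(\tilde e)}$. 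Choose a lifted face $\tilde f_1$ with side $\tilde e$ and let $K$ be $\tilde f_1$ with small open neighbourhoods of its three vertices removed; this is a fixed compact subset of $\SphereExtUni$ containing positive-length sub-arcs of $\tilde e$, of the side of $\tilde f_1$ through $v$, and of the side through $v'$. On $K$ the $\dev_k$ converge uniformly, so the developed geodesic lines carrying these sub-arcs converge; because the limit face $\dev(\tilde f_1)$ is non-degenerate, the angle of $\dev_k(\tilde f_1)$ at each of $v,v'$ converges to a value in $(0,\pi)$, so the relevant pairs of lines are eventually uniformly transverse and their intersection points $\dev_k(v)\to\dev(v)$, $\dev_k(v')\to\dev(v')$ converge. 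Hence $\varphi_{\mathcal{T}}([\HypConeStruct_k])_e=d_{\HH}(\dev_k(v),\dev_k(v'))\to d_{\HH}(\dev(v),\dev(v'))=\varphi_{\mathcal{T}}([\HypConeStruct])_e$, and $\varphi_{\mathcal{T}}$ is continuous.

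\textbf{Main obstacle.} The delicate part is this last step: the triangulation vertices are punctures, so the developing map is undefined there and ``the developed position of a cone point'' must be recovered by extrapolating the developed edges meeting it along their geodesics. Making that recovery continuous is exactly where uniform non-degeneracy of the nearby triangles is used, together with a $k$-independent choice of the compact witness $K$; the companion point to get right --- routine but worth stating carefully --- is the passage from convergence in the quotient $\mathcal{C}^0$ topology to convergence of suitably chosen developing-map representatives, which rests on the openness of the quotient map $\mathcal{D}\to\HypCone{\alpha}(\SphereExtMkTop)$.
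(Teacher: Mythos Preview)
Your proof is correct and follows the same two-sided strategy as the paper: build a continuous family of developing maps from length vectors for one direction, and read off edge lengths from developing maps for the other. On the forward direction you are actually more careful than the paper, which asserts in a single sentence that compact-open convergence of developing maps implies convergence of the developed cone-point positions; your recovery of those positions by extrapolating the developed sides of a lifted face to their intersection is a genuine detail the paper glosses over, and your identification of it as the ``main obstacle'' is apt. Conversely, on the inverse direction your phrase ``place $\tilde f_0$ as $f_0(\ell)$'' hides the one ingredient the paper makes explicit: to get an honest continuous map $\dev_\ell\colon\SphereExtUni\to\HH$ (rather than just a combinatorially consistent collection of triangles in $\HH$) you need edge-compatible parametrizations of each topological face by its model hyperbolic triangle, varying continuously with the side lengths; the paper supplies these via hyperbolic barycentric coordinates (Appendix~\ref{apx:barycentric-coordinates}), which is precisely what turns ``the two triangle models match along their common edge'' into a statement about maps defined on $\SphereExtUni$.
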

\begin{proof}
Fix a triangulation $\mathcal T$ of $\SphereExtMkTop$ and pick a hyperbolic cone structure $\HypConeStruct$ whose isotopy class $[\HypConeStruct]$ lies in $U_\mathcal{T}$. By the definition of $U_\mathcal{T}$, the edges of $\mathcal{T}$ snap tight to unbroken geodesic arcs on the hyperbolic cone sphere determined by $[\HypConeStruct]$. Like we did before, lift $\mathcal{T}$ to a geodesic triangulation $\widetilde{\mathcal T}$ of~$\SphereExtUni$ in which each triangle is mapped isometrically into $\HH$ by the developing map $\dev_{\HypConeStruct}\colon \SphereExtUni\to \HH$.

We start by proving that $\varphi_\mathcal{T}\colon U_\mathcal{T}\to \R_{>0}^{6n-15}$ is continuous at $[\HypConeStruct]$. Pick an edge $e$ of $\mathcal T$ and an edge $\tilde e$ of $\widetilde{\mathcal T}$ that lifts $e$. Let $\pi_e\colon \R_{>0}^{6n-15}\to \R_{>0}$ be the projection that picks out the $e$ component, so $\pi_e\circ\varphi_\mathcal{T}([\HypConeStruct])$ is the $\HypConeStruct$-length of $e$. To show that the $e$ component of $\varphi_\mathcal{T}([\HypConeStruct])$ depends continuously on $[\HypConeStruct]$, recall that $\dev_{\HypConeStruct}$ is a local isometry. The length of the geodesic arc $\dev_\HypConeStruct(\tilde e)$ depends continuously on its two endpoints in $\HH$, and therefore depends continuously on the developing map in the $\mathcal{C}^0$ topology, because convergence in this topology implies pointwise convergence of the images of the cone points.

We now prove that the inverse map is continuous at $\ell=\varphi_{\mathcal T}([\HypConeStruct])$. For every $\ell'\in\varphi_{\mathcal T}(U_{\mathcal T})$ close to $\ell$, our plan is to deform $\dev_{\HypConeStruct}$ into a developing map $\dev_{\HypConeStruct'}$ of a new hyperbolic cone structure $\HypConeStruct'$ representing the isotopy class $\varphi_\mathcal{T}^{-1}(\ell')$. We'll then prove that $\dev_{\HypConeStruct'}$ converges to $\dev_\HypConeStruct$ in the $\mathcal{C}^0$ topology as $\ell'$ goes to $\ell$.

We'll build $\dev_{\HypConeStruct'}$ in pieces, using diffeomorphisms $f_{\widetilde{T},\ell'}$ indexed by the faces $\widetilde{T}$ of the triangulation $\widetilde{\mathcal{T}}$. Let $T$ be the image of $\widetilde T$ under $\dev_{\HypConeStruct}$. The side lengths of $T$ are given by three components of $\ell$. For every $\ell'\in\varphi_{\mathcal T}(U_{\mathcal T})$ close to $\ell$, pick some hyperbolic triangle $T'\subset \HH$ whose side lengths are given by the same three components of $\ell'$. We can get a diffeomorphism $f_{\widetilde{T}, \ell'} \maps T \to T'$ that maps edges to edges by identifying points with the same barycentric coordinates, as defined in Defintion~\ref{defn:barycentric-coordinates} from Appendix~\ref{apx:barycentric-coordinates}. By composing $f_{\widetilde{T}, \ell'}$ with the inclusion $T' \hookrightarrow \HH$, we can think of it as a continuous map $T \to \HH$, which depends continuously on the vertices of $T'$ with respect to to the $\mathcal{C}^0$ topology on $\mathcal{C}^0(T, \HH)$. This continuity property follows from the continuity of the barycentric parameterization with respect to a triangle's vertices. It implies that $f_{\widetilde{T}, \ell'}\colon T\to \HH$ converges to $T\hookrightarrow \HH$ in $\mathcal{C}^0(T, \HH)/\psl$ as $\ell'$ approaches $\ell$.

For every $\ell'$, we build a developing map $\dev_{\HypConeStruct'}$ by patching together the maps $f_{\widetilde T, \ell'}\circ\dev_\HypConeStruct$ over all the triangles $\widetilde T$. We start with an arbitrary first triangle $\widetilde T_0$ of $\mathcal{\widetilde{T}}$, defining $\dev_{\HypConeStruct'}$ on $\widetilde T_0$ to be $f_{\widetilde T_0, \ell'}\circ\dev_{\HypConeStruct}$. Now, for any other triangle $\widetilde T_1$ that shares an edge with $\widetilde T_0$, there's a unique isometry $A_{\widetilde{T_1},\ell'}$ of $\HH$ for which $T_0'$ and $A_{\widetilde{T_1},\ell'} T_1'$ share the same edge. We can extend $\dev_{\HypConeStruct'}$ continuously to $\widetilde T_1$ by $A_{\widetilde{T_1},\ell'}\circ f_{\widetilde T_1, \ell'}\circ\dev_{\HypConeStruct}$, because when two hyperbolic triangles share an edge, their barycentric parameterizations match along that edge. By repeating this patching procedure across every edge of $\widetilde{\mathcal{T}}$, we get a continuous map $\dev_{\HypConeStruct'}\colon \SphereExtUni\to \HH$.\footnote{With respect to the smooth structure on $\SphereExtUni$ induced by the hyperbolic cone structure $\HypConeStruct$, the map $\dev_{\HypConeStruct'}$ will be smooth on the interior of each triangle in $\widetilde{\mathcal{T}}$. However, we can't expect $\dev_{\HypConeStruct'}$ to be globally smooth, because the gluing between triangles is only smooth in exceptional cases.} By construction, $\dev_{\HypConeStruct'}$ is a developing map for a hyperbolic cone structure $\HypConeStruct'$ with length vector $\ell'$, and it converges to $\dev_{\HypConeStruct}$ in $\mathcal{C}^0(\SphereExtUni,\HH)/\psl$ as $\ell'$ approaches $\ell$.
\end{proof}
\begin{rem}
The sets $U_\mathcal{T}$ also define an open covering of $\HypCone{\alpha}(\SphereExtMkSmooth)$ equipped with the $\mathcal{C}^\infty$ topology from Section~\ref{sec:parametrization-using-triangulations}. The maps $\varphi_\mathcal{T}$ can be defined analogously and remain injective and continuous with respect to the $\mathcal{C}^\infty$ topology on $U_{\mathcal{T}}$. However, it's not clear to us that their inverses should be continuous in the finer $\mathcal{C}^\infty$ topology. The developing map $\dev_{\HypConeStruct'}$ that we constructed in the proof of Proposition~\ref{length-homeo} by patching up triangles gives a piecewise smooth map $\SphereExtUniSmooth\to \HH$, but this map generally isn't globally smooth, because gluing along the edges doesn't extend it smoothly across triangles in general.
\end{rem}

The atlas $\{(U_{\mathcal{T}},\varphi_{\mathcal{T}})\}$, indexed by the triangulations of $\SphereExtMkTop$, give $\HypCone{\alpha}(\SphereExtMkTop)$ the structure of a topological manifold. We believe we can give $\HypCone{\alpha}(\SphereExtMkTop)$ a smooth manifold structure that makes each map $\varphi_\mathcal{T}$ a diffeomorphism onto its image. One approach is to first allow the angle defects to vary, defining an augmented space $\HypConeFree(\SphereExtMkTop)$ that comprises all of the spaces $\HypCone{\alpha}(\SphereExtMkTop)$. The maps $\varphi_\mathcal{T}$ extend naturally to $\HypConeFree(\SphereExtMkTop)$, and our proof of Proposition~\ref{length-homeo} generalizes to this extension. The maps $\varphi_\mathcal{T}$ should form a smooth atlas for $\HypConeFree(\SphereExtMkTop)$, giving it a smooth manifold structure. To see that the transition maps are smooth, first recall that any two triangulations of $\SphereExtMkTop$ are connected by a finite sequence of edge flips~\cite[Proposition~3.8]{triangulations}. That means we just need to show that the coordinates change smoothly when we flip an edge. We can do this by observing that the length of a diagonal of a hyperbolic quadrilateral depends smoothly on the lengths of the other diagonal and the four sides.

We can now see each $\HypCone{\alpha}(\SphereExtMkTop)$ as a codimension-$(2n-3)$ slice of $\HypConeFree(\SphereExtMkTop)$, cut out by the equations that make the face angles of a chosen triangulation $\mathcal{T}$ sum to the desired cone angle around each vertex. Since the face angles depend on the edge lengths through the law of cosines, these $2n-3$ equations are non-linear but smooth. For at least some values of $\alpha$, the implicit function theorem should guarantee that $\HypCone{\alpha}(\SphereExtMkTop)$ is a smooth submanifold of $\HypConeFree(\SphereExtMkTop)$, giving it a smooth structure in which $\varphi_\mathcal{T}$ is a diffeomorphim onto its image. This argument works for any triangulation we pick, so it actually gives a smooth structure in which every map $\varphi_\mathcal{T}$ is a diffeomorphism onto its image.
\begin{conj}\label{smooth-atlas}
Under some constraint on cone angles, we can give $\HypCone{\alpha}(\SphereExtMkTop)$ a smooth manifold structure that makes each map $\varphi_\mathcal{T}$ a diffeomorphism onto its image. In particular, we expect this to happen when the first tuple of angle defects $\alpha\in (0, 2\pi)^\mathcal{P}$ satisfies
\[ \sum_{p\in\mathcal{P}}\alpha_p > 2\pi(n-1), \]
as we're assuming; the cone angles at the last $n-3$ marked points are near $4\pi$; and the inequality from Corollary~\ref{cor:troyanov} is satisfied.
\end{conj}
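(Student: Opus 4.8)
The plan is to follow the sketch given just before the statement, fleshing out its three ingredients: realize everything inside an auxiliary ``free'' space $\HypConeFree(\SphereExtMkTop)$ on which the angle defects are unconstrained, show that the edge-length charts $\varphi_{\mathcal T}$ form a \emph{smooth} atlas there, exhibit $\HypCone{\alpha}(\SphereExtMkTop)$ as the zero set of an explicit smooth map, and apply the implicit function theorem. First I would define $\HypConeFree(\SphereExtMkTop)$ as the space of all hyperbolic cone structures on $\SphereExtMkTop$ with cone points at the $2n-3$ marked points, cone angles unconstrained (but with the Gauss--Bonnet inequality of Corollary~\ref{cor:troyanov} in force), topologized by the $\mathcal{C}^0$ topology on developing maps exactly as for $\HypCone{\alpha}(\SphereExtMkTop)$. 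The open sets $U_{\mathcal T}$ and the maps $\varphi_{\mathcal T}\colon U_{\mathcal T}\to\R_{>0}^{6n-15}$ extend verbatim, and the entire proof of Proposition~\ref{length-homeo} goes through unchanged, since the patching construction there never used a constraint on cone angles. Thus $\{(U_{\mathcal T},\varphi_{\mathcal T})\}$ is a topological atlas on $\HypConeFree(\SphereExtMkTop)$, whose expected dimension $6n-15$ agrees with $(2n-3)$ free angle defects plus $\dim_{\R}\Teich(\SphereExtMkTop)=4n-12$; this is the consistency check to keep in mind throughout.

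Next I would prove that the transition maps $\varphi_{\mathcal T'}\circ\varphi_{\mathcal T}^{-1}$ are smooth. By \cite[Proposition~3.8]{triangulations}, any two triangulations with vertex set the marked points are joined by a finite sequence of edge flips, so it suffices to treat a single flip: one replaces a geodesic diagonal $e$ of the embedded geodesic quadrilateral formed by the two faces adjacent to $e$ by the other diagonal $e'$. Hyperbolic trigonometry expresses $\operatorname{length}(e')$ as a smooth function of $\operatorname{length}(e)$ and the four side lengths --- reconstruct the four vertices in $\HH$ from the five lengths (up to isometry) on the open locus where the quadrilateral is nondegenerate, then take a distance --- each step being a composition of the hyperbolic law of cosines and its local inverse, hence smooth with smooth inverse; all other coordinates are untouched. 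This endows $\HypConeFree(\SphereExtMkTop)$ with a smooth $(6n-15)$-manifold structure in which every $\varphi_{\mathcal T}$ is a diffeomorphism onto its image. Now fix one $\mathcal T$. For each marked point $v$, the cone angle at $v$ of a structure in $U_{\mathcal T}$ is the sum of the interior angles at $v$ of the faces of $\mathcal T$ meeting $v$, and each such face angle is, via the hyperbolic law of cosines, a smooth function of three of the coordinates $\varphi_{\mathcal T}$. Collecting these over all $2n-3$ marked points gives a smooth map $\Theta_{\mathcal T}\colon U_{\mathcal T}\to\R^{2n-3}$ recording the cone-angle tuple, and $\HypCone{\alpha}(\SphereExtMkTop)\cap U_{\mathcal T}=\Theta_{\mathcal T}^{-1}(\theta)$ for the target tuple $\theta$ with entries $2\pi-\alpha_p$ at the first $n$ points and $4\pi$ at the extra $n-3$ points.

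The main obstacle is the final step: showing that $\theta$ is a regular value of $\Theta_{\mathcal T}$, i.e.\ that $d\Theta_{\mathcal T}$ has rank $2n-3$ everywhere on $\HypCone{\alpha}(\SphereExtMkTop)\cap U_{\mathcal T}$. This is exactly the ``constraint on cone angles'' that the statement hedges, and I would pursue it two ways. The direct route is an infinitesimal-rigidity argument: differentiate the total-angle functions using the Schl\"{a}fli-type relations for hyperbolic triangles, and show that no nonzero covector on edge-length space annihilates all $2n-3$ differentials; this reduces to a combinatorial-linear-algebra statement about the incidence pattern of $\mathcal T$ together with positivity of certain trigonometric coefficients, which one can hope to verify when $\sum_{p}\alpha_p>2\pi(n-1)$, the extra cone angles are near $4\pi$, and the Troyanov inequality holds --- for instance by perturbing from a symmetric or near-degenerate model where full rank is visible, full rank being an open condition. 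The analytic route is to import smoothness from Corollary~\ref{cor:troyanov}, which identifies $\HypCone{\alpha}(\SphereExtMkSmooth)$ with the smooth manifold $\Teich(\SphereExtMkSmooth)$: if one first resolves the open question of whether the inclusion $\HypCone{\alpha}(\SphereExtMkSmooth)\hookrightarrow\HypCone{\alpha}(\SphereExtMkTop)$ is a homeomorphism, then $\HypCone{\alpha}(\SphereExtMkTop)$ inherits a smooth structure, and it remains only to check that the $\varphi_{\mathcal T}$ are smooth in it --- which follows from real-analytic dependence of developing maps on the conformal structure --- so that $\varphi_{\mathcal T}$ is a diffeomorphism onto its image by a dimension count and invariance of domain. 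In either case, once a single triangulation is settled, the smoothness of the flip transition maps established above shows the resulting smooth structure is independent of $\mathcal T$, so that every $\varphi_{\mathcal T}$ is simultaneously a diffeomorphism onto its image, as desired.
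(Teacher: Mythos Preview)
This statement is a \emph{conjecture} in the paper; there is no proof to compare against, only the heuristic sketch in the paragraphs immediately preceding it. Your proposal is a faithful and somewhat more detailed expansion of exactly that sketch: pass to the free space $\HypConeFree(\SphereExtMkTop)$, show the $\varphi_{\mathcal T}$ form a smooth atlas there via edge-flip transitions, and cut out $\HypCone{\alpha}(\SphereExtMkTop)$ as a level set of the cone-angle map using the implicit function theorem. You correctly isolate the one step the paper does not know how to complete---that the target angle tuple is a regular value of $\Theta_{\mathcal T}$---and this is precisely why the paper states the result as a conjecture.

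Your two suggested routes to close that gap (an infinitesimal-rigidity argument, or importing smoothness from Teichm\"uller theory via Troyanov) go beyond anything the paper offers, but neither is actually carried out: the first is left as ``one can hope to verify'' near a model configuration, and the second is contingent on resolving the open question about the $\mathcal{C}^0$ versus $\mathcal{C}^\infty$ topologies raised earlier in the paper. So your proposal does not constitute a proof either; it is an accurate diagnosis of where the difficulty lies, matching the paper's own assessment.
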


\section{Building hyperbolic cone spheres}\label{sec:building-cone-surfs}
\subsection{Overview}
We present a way of building hyperbolic cone spheres by gluing together elementary pieces, called samosas. The gluing is done along slits that we cut open on each samosa, as described in Section~\ref{sec:conical-cut-paste}. The original inspiration behind this construction was Maret's triangle chain parameterization of DT representations. We'll see that by constructing hyperbolic cone spheres in this way, we get good control on their holonomy. Furthermore, it is general enough to chart the whole space of hyperbolic cone structures on a model punctured sphere.
\subsection{Building blocks}\label{sec:building-blocks}
Let's start with the simplest example of a hyperbolic cone surface.
\begin{defn}\label{defn:samosa}
A {\em samosa} is a hyperbolic sphere with three conical singularities, which we'll call its {\em corners}. The sphere is oriented, and the corners come with a cyclic order.
\end{defn}
\begin{center}
\includegraphics[width=8cm]{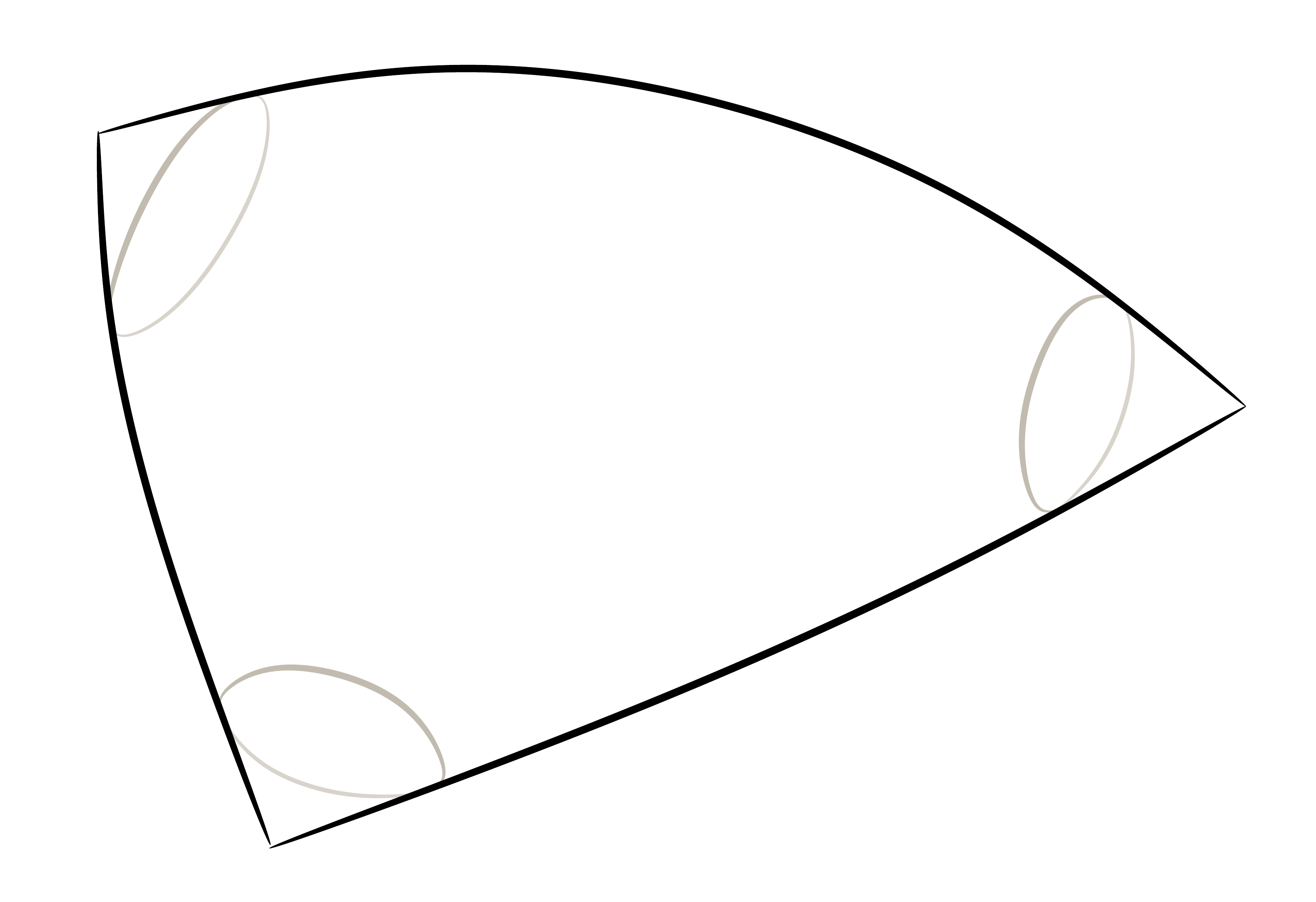}
\end{center}
By the Gauss-Bonnet theorem, the angles of the conical singularities must sum to less than $2\pi$.\footnote{If the sum is equal to $2\pi$, then the samosa is flat, instead of hyperbolic.}  By Corollary~\ref{cor:troyanov}, these {\em corner angles} determine the samosa uniquely, up to isometry. We can build the samosa with corner angles $\theta = (\theta_1, \theta_2, \theta_3)$ by gluing two hyperbolic triangles with interior angles $\theta/2$ along their edges. This construction is the reverse of an intrinsic decomposition of a samosa. A three-punctured sphere has one simple curve, up to isotopy, between each pair of punctures~\cite[Fig.~3]{arc+curve}, so a samosa has one simple geodesic between each pair of corners~\cite[\S 1.2.7]{mcg-primer}. The corners and the simple geodesics between them form a topological circle, which we'll call the samosa's {\em equator}. The equator is oriented to match the cyclic order of the corners. Cutting along the equator decomposes the samosa into a pair of hyperbolic triangles, which are isometric---with opposite orientations---because their sides have equal lengths. We'll call these triangles the samosa's {\em hemispheres}. The equator runs clockwise around the {\em northern} hemisphere, and counterclockwise around the {\em southern} hemisphere. At corner $j$, each hemisphere has a vertex with interior angle $\theta_j/2$.
\begin{center}
\begin{tikzpicture}
\node at (0, 0) {\includegraphics[width=8cm]{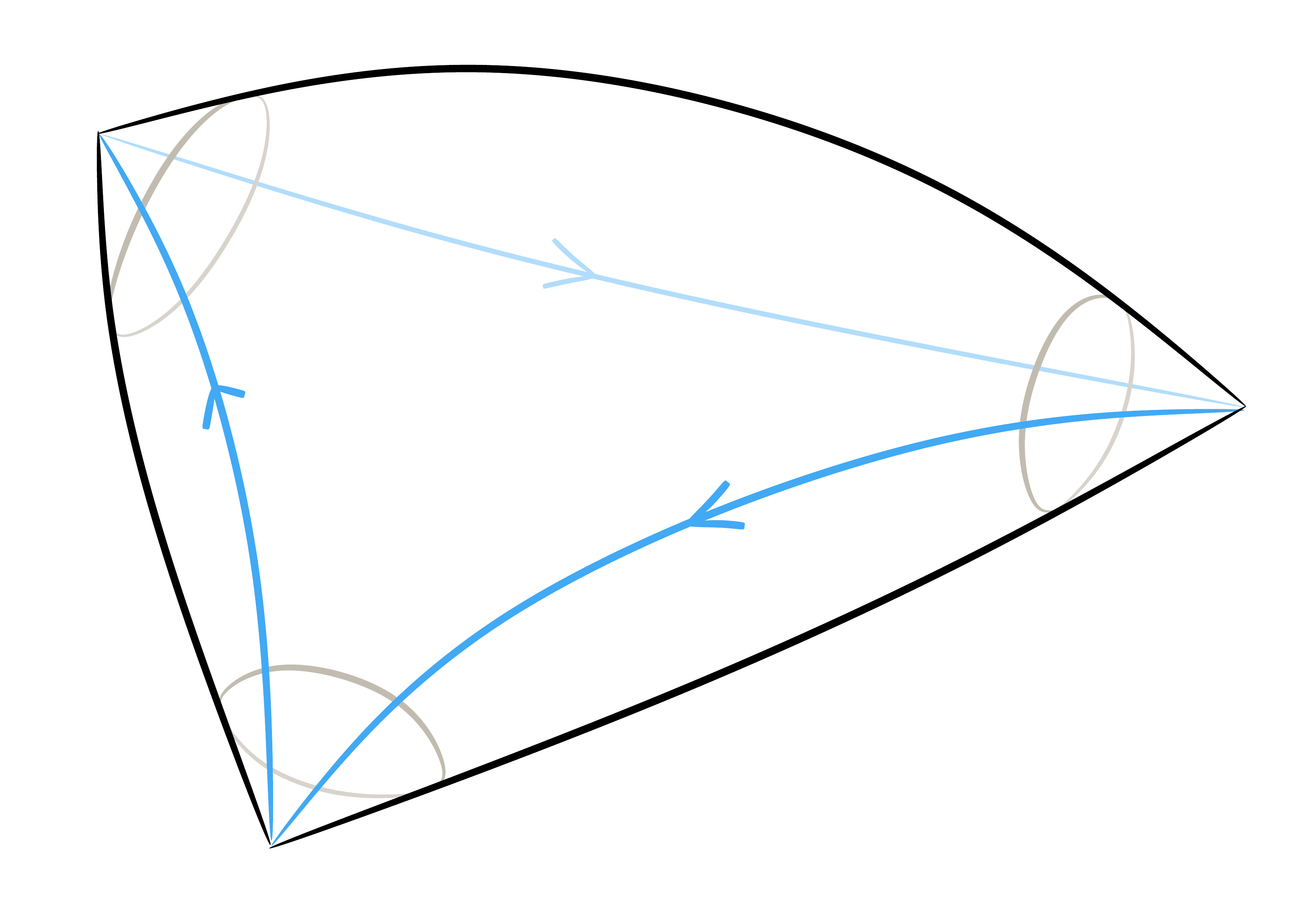}};
\node (N) at (3.9,2.5) {\text{northern hemisphere}};
\draw (N.west) edge[out=180,in=65,->] (0.8, 1.6);
\node (S) at (3.5,-2) {\text{southern hemisphere}};
\draw (S.west) edge[out=180,in=-60,->] (0.2,-1.0);
\end{tikzpicture}
\end{center}

\begin{defn}\label{defn:slit}
A {\em slit} on a samosa is a geodesic segment that starts at a corner and stays within a hemisphere.
\end{defn}
A typical slit ends in the interior of a hemisphere. However, a slit can also end on the equator or at a corner. A slit starting at corner $j$ is parameterized by its length $\ell > 0$ and its angle $\phi$ from the negative side of the equator. The angle $\phi$ is a circle-valued parameter inside $\R/\theta_j\Z$. We usually think of $\phi$ as a number in $[-\tfrac{1}{2} \theta_j, \tfrac{1}{2} \theta_j]$. We call $\ell$ the \emph{slit length} and $\phi$ the \emph{slit angle}.
\begin{center}
\includegraphics[width=8cm]{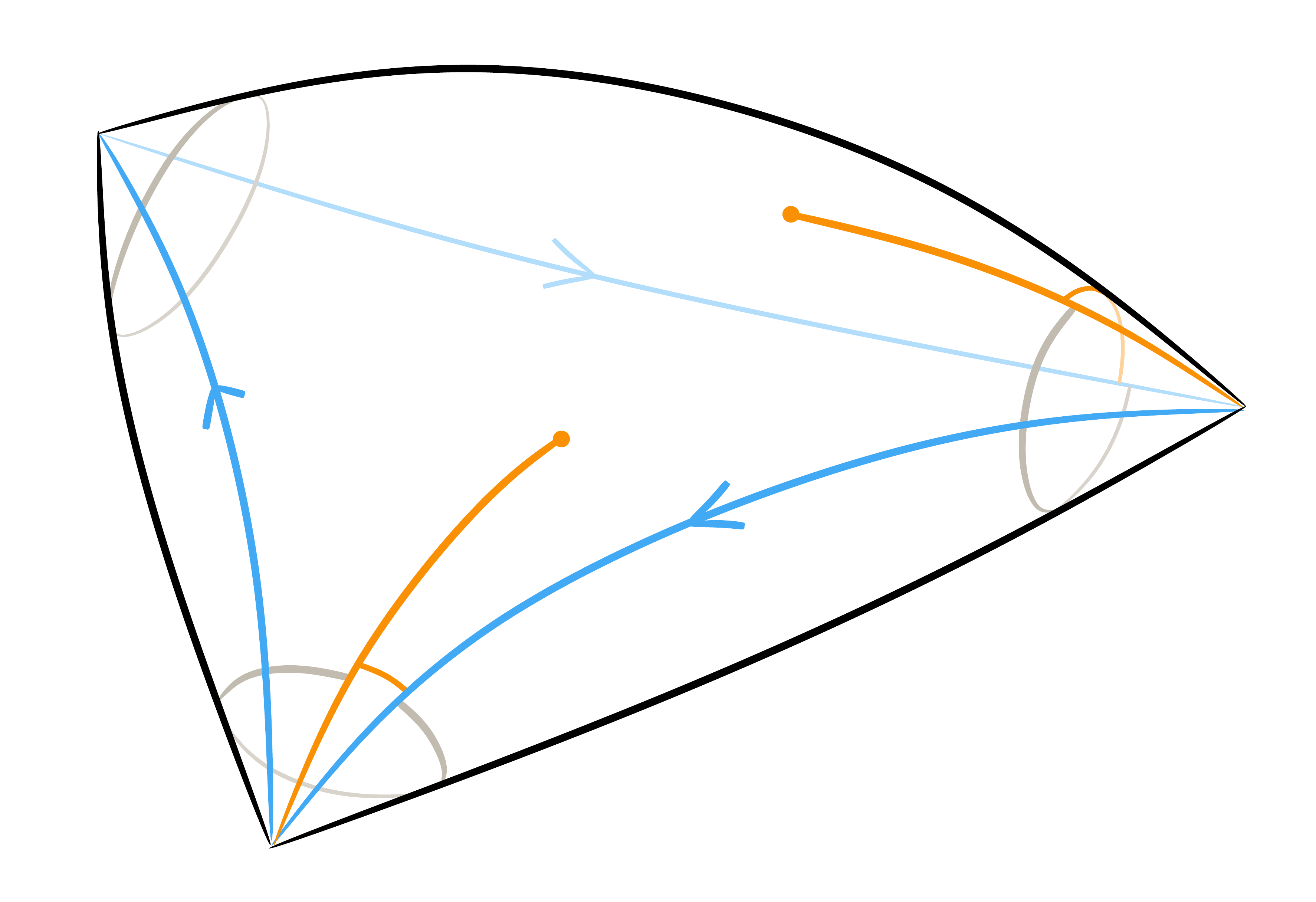}
\end{center}
When $\phi = 0$, the slit runs along the equator in the negative direction. A slit in the northern hemisphere has $\phi \ge 0$, and a slit in the southern hemisphere has $\phi \le 0$.

\begin{lem}\label{lem:bound-on-l-one-slit}
The slit at corner $j$ stays within one hemisphere as long as its length doesn't exceed a certain bound $\ell_\text{max}$, which depends on the slit angle $\phi_j$ and the shape of the samosa. Explicitly, $\ell_\text{max}$ is given by the formula
\[
\coth(\ell_{max})=\frac{1}{\sinh(d_{j-1})}\big(\cos(\phi_j)\cosh(d_{j-1})+|\sin(\phi_j)|\cot(\theta_{j+1}/2)\big),
\]
where $d_{j-1}>0$ is the length of the equator segment joining the corners $j$ and $j+1$, and $\theta_{j+1}/2$ is the angle of the corner $j+1$.
\end{lem}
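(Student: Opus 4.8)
The plan is to push the whole question into one of the samosa's two hemispheres, where it becomes a statement about a single hyperbolic triangle, and then settle it by elementary hyperbolic trigonometry. Cutting the samosa along its equator produces the two hemispheres: isometric hyperbolic triangles whose vertices are the three corners and whose interior angle at corner $i$ is $\theta_i/2$. By Definition~\ref{defn:slit} the slit lies in one of them, say the triangle $T$ forming the northern hemisphere, and it is a geodesic segment issuing from the vertex $V$ of $T$ at corner $j$, making angle $\phi_j$ there with the equator edge $e$ that joins corners $j$ and $j+1$ (the edge of length $d_{j-1}$). Since $T$ is geodesically convex and the slit enters its interior, the slit stays inside $T$ exactly up to the first point where its supporting geodesic meets the edge of $T$ opposite corner $j$ --- the edge from corner $j-1$ to corner $j+1$ --- and any longer geodesic continuation crosses the equator, leaving the hemisphere. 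Thus $\ell_{\mathrm{max}}$ is the distance from $V$, measured along the geodesic carrying the slit, to the complete geodesic $L$ containing that opposite edge. The samosa also carries an isometric involution fixing the equator pointwise and exchanging the two hemispheres (they are isometric triangles glued along their common boundary); it fixes $V$ and the edge $e$ and replaces $\phi_j$ by $-\phi_j$, which is the source of the absolute value $|\sin(\phi_j)|$ in the statement and lets me assume $\phi_j \ge 0$ from here on.

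For the computation, write $W$ for corner $j+1$, $d = d_{j-1}$ for the length of $e$, and $\psi = \theta_{j+1}/2$ for the interior angle of $T$ at $W$, which is also the angle between $e$ and $L$ at $W$. Drop the perpendicular from $V$ to $L$, with foot $N$ and length $m$. In the right triangle with vertices at $V$, $N$, and $W$ (right angle at $N$), the standard hyperbolic right-triangle formulas give $\sinh(m) = \sinh(d)\sin(\psi)$ and, for its angle $\psi^\ast$ at $V$, the relations $\sin(\psi^\ast) = \cos(\psi)/\cosh(m)$ and $\cos(\psi^\ast) = \cosh(d)\sin(\psi)/\cosh(m)$, the second using $\cosh^2(m) - \cos^2(\psi) = \cosh^2(d)\sin^2(\psi)$. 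Now consider the right triangle with vertices at $V$, $N$, and the point $E$ where the slit's geodesic meets $L$: it has its right angle at $N$, hypotenuse the segment of length $\ell_{\mathrm{max}}$ from $V$ to $E$, and angle $\phi_j - \psi^\ast$ at $V$ (up to sign, since the slit and the segment $VN$ make the known angles $\phi_j$ and $\psi^\ast$ with $e$). The relation $\tanh(m) = \tanh(\ell_{\mathrm{max}})\cos(\phi_j - \psi^\ast)$ rearranges to $\coth(\ell_{\mathrm{max}}) = \coth(m)\cos(\phi_j - \psi^\ast)$; expanding the cosine and substituting the values above collapses the two terms to $\coth(m)\cos(\psi^\ast) = \coth(d)$ and $\coth(m)\sin(\psi^\ast) = \cot(\psi)/\sinh(d)$, giving
\[
\coth(\ell_{\mathrm{max}}) = \frac{1}{\sinh(d)}\bigl(\cos(\phi_j)\cosh(d) + \sin(\phi_j)\cot(\psi)\bigr),
\]
which is the asserted formula after restoring $d = d_{j-1}$, $\psi = \theta_{j+1}/2$, and the absolute value.

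I expect the main work to lie not in this identity but in the geometric bookkeeping around it: checking that the perpendicular foot $N$ sits where the argument needs it to --- in particular when $\psi = \theta_{j+1}/2 \ge \pi/2$, which a samosa permits, so that the signs in the two right triangles come out correctly --- that $\phi_j - \psi^\ast$ really is the angle at $V$ in the triangle with vertices $V$, $N$, $E$, and that a slit with $\phi_j \in (0, \theta_j/2)$ cannot escape $T$ across either edge incident to corner $j$, so that the opposite edge is indeed the binding constraint. I would also verify the two degenerate endpoints as consistency checks: at $\phi_j = 0$ the slit runs along $e$ and the formula collapses to $\coth(\ell_{\mathrm{max}}) = \coth(d_{j-1})$, so $\ell_{\mathrm{max}}$ is the distance between corners $j$ and $j+1$; at $\phi_j = \pm\theta_j/2$ the slit runs along the other equator edge at corner $j$, and the formula becomes the classical hyperbolic cotangent rule for $T$ expressing the length of that edge through $d_{j-1}$, $\theta_j/2$, and $\theta_{j+1}/2$. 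Each of these is a short check with the right-triangle formulas, and none of them changes the final answer.
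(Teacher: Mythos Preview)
Your argument is correct, but it takes a different computational route from the paper. The paper identifies the same triangle you do---vertices at corners $j$ and $j+1$ and the point where the extended slit meets the opposite equator edge, with adjacent sides $\ell_{\max}$ and $d_{j-1}$ meeting at angle $|\phi_j|$ and with angle $\theta_{j+1}/2$ at corner $j+1$---and then applies the four-parts formula~\eqref{eq:four-part-formula} in a single stroke to read off the relation for $\coth(\ell_{\max})$. You instead drop the perpendicular from $V$ to the opposite geodesic and solve two right triangles, which is effectively a rederivation of the four-parts formula in this particular instance. Your approach is more elementary, relying only on right-triangle identities, and it makes the geometry (including the degenerate endpoints and the symmetry giving $|\sin\phi_j|$) more transparent; the paper's approach is shorter and sidesteps the bookkeeping you flag about the location of the foot $N$ when $\theta_{j+1}/2 \ge \pi/2$, since the four-parts formula holds for any hyperbolic triangle regardless of whether altitudes land inside it.
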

\begin{proof}
We apply the four-parts formula---formula~\eqref{eq:four-part-formula} from Appendix~\ref{apx:trig-formulae}---to the triangle determined by the slit and the equator segment joining the corners $j$ and $j+1$. In this triangle, sides of length $\ell_{max}$ and $d_{j-1}$ meet at an angle of $\phi_j$. The side of length $\ell_{max}$ is opposite to the corner $j+1$, which has angle $\theta_{j+1}/2$. The absolute value on $\sin(\phi_j)$ makes the formula valid no matter which hemisphere the slit lies in.
\begin{center}
\begin{tikzpicture}[scale=.9, every node/.style={inner sep=0.5mm}]
\node[anchor=south west, inner sep=0mm] at (-3.21, -0.03) {\reflectbox{\includegraphics[width=4.05cm, angle=-90]{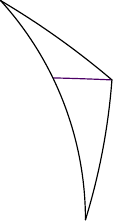}}};
\coordinate (A) at (5.59, 1.06);
\coordinate (B) at (-3.18, 4.46);
\coordinate (M) at (-0.08, 2.36);
\draw ++(14:0.3) arc (14:88:0.3) node[midway, anchor=south west] {$\phi_j$};
\draw (A) ++(180:1) arc (180:193:1) node[midway, inner sep=2.3mm, anchor=5] {$\theta_{j+1}/2$};
\path (0, 0) -- (M) node[plum, inner sep=1mm, pos=0.55, anchor=west] {$\ell_\text{max}$};
\path (0, 0) edge[draw=none, bend right=6] node[inner sep=1.2mm, midway, anchor=120] {$d_{j-1}$} (A);
\node[anchor=north, inner sep=1mm] {$j$};
\node[anchor=west, inner sep=1mm] at (A) {$j+1$};
\node[anchor=south, inner sep=1mm] at (B) {$j-1$};
\end{tikzpicture}
\end{center}
\end{proof}

The hyperbolic law of cosines---formula~\eqref{eq:hyperbolic-law-of-cosines-angles} from Appendix~\ref{apx:trig-formulae}---expresses $\cosh(d_{j-1})$ in terms of the corner angles of the samosa. So, it's also possible to formulate the bound from Lemma~\ref{lem:bound-on-l-one-slit} in terms of the slit angle $\phi_j$ and the corner angles instead of $d_{j-1}$.

\begin{defn}\label{defn:slit-samosa}
A {\em slit samosa} is a samosa with one to three slits, which start at different corners and don't collide. That means none of the slits passes through or ends on another slit.

The slits are allowed to share endpoints, and to end at the corners. We'll say the slit samosa is {\em degenerate} when that happens.
\end{defn}

\subsection{Assembly instructions}\label{sec:assembly-instructions}
We can build more complicated hyperbolic cone spheres by gluing slit samosas together. The gluing operation we'll use is known as the \emph{conical cut and paste}---described, for example, in \cite[§2]{branch-sing}. Section~\ref{sec:conical-cut-paste} explains how to apply this operation to samosas. Gluing data will be organized into an object called a {\em samosa assembly}, defined in Section~\ref{sec:assembly-instructions-parametrization}. The gluing process, described in Section~\ref{sec:assembly-instructions-realization}, can be seen as a geometric realization that turns abstract gluing data into an actual hyperbolic cone sphere. When two samosa assemblies have the same combinatorics, their realizations are typically identified through a distinguished homotopy class of homeomorphisms, as described in Section~\ref{sec:marking}.

\subsubsection{Conical cut and paste for samosas}\label{sec:conical-cut-paste} To join two slit samosas using a {\em conical cut and paste}, we start by cutting along two slits of equal length, one on each of the two samosas. The incision runs from the origin of the slit to its end point, separating it into a left lip and a right lip. We then glue each slit's left lip to the other slit's right lip. In doing so, we trade the conical singularities of angles $\theta$ and $\theta'$ at the slit corners for a conical singularity of angle $\theta+\theta'$ at the origin of the slit and another conical singularity of angle $4\pi$ at the end of the slit. Later, in Section~\ref{sec:assembly-instructions-realization}, we'll primarily be interested in the case where $\theta+\theta'=2\pi$. In that case, the conical cut and paste procedure turns two samosas into one hyperbolic cone sphere with four fractional singularities and one whole singularity. 

\subsubsection{Parameterization}\label{sec:assembly-instructions-parametrization}
Combinatorially, the gluing of a collection of slit samosas is described by a genus-0 {\em pants assembly:} a bunch\footnote{We have finite sets in mind, but our construction should work for a countably infinite sets too. Countability ensures that the pants assembly describes a second-countable surface.} of pairs of pants with a relation that says certain pairs of cuffs should be glued together, forming a connected surface of genus 0. The glued cuffs become pants curves that form a pants decomposition of the assembled surface, and the unglued cuffs are contracted to marked points. To get a sphere with $n$ marked points, we have to glue $n-2$ pairs of pants along $n-3$ pants curves.

For convenience, we'll give our pants assemblies some additional structure.
\begin{itemize}
\item Each pair of pants comes with an orientation and a cyclic ordering of its cuffs. The pants are meant to be glued with matching orientations.
\item The pants curves on the assembled surface are oriented. Since the assembled surface is oriented too, this gives each pants curve a left side and a right side.
\end{itemize}

\begin{defn}\label{defn:chained-pants-decomp}
We'll say a pair of pants in a pants decomposition is \emph{terminal} if two of its cuffs are unglued. Any pants decomposition has at least two terminal pairs of pants. A pants decomposition with only two terminal pairs of pants will be called a \emph{chained} pants decomposition.
\end{defn}
An example of a chained pants decomposition is the standard pants decomposition associated with a geometric presentation of $\pi_1\SpherePk$, introduced in Section~\ref{sec:dt-review}.

\begin{defn}\label{defn:samosa-assembly}
A {\em samosa assembly} is a genus-0 pants assembly with the following extra information, which tells us how to realize the pairs of pants as slit samosas glued along slits.
\begin{itemize}
    \item A tuple of angle defects $\alpha \in (0, 2\pi)^\mathcal{P}$, where ${\mathcal P}$ is the set of marked points on the assembled sphere. When we realize the pairs of pants as samosas, each marked point $p$ becomes an un-slit corner with cone angle $2\pi - \alpha_p$.
    \item A tuple of angles $\beta \in (0, 2\pi)^{\mathcal B}$, where ${\mathcal B}$ is the set of pants curves on the assembled sphere. When we realize the pairs of pants as samosas, the cuff to the left of pants curve $b$ becomes a corner with cone angle $\beta'_b := 2\pi - \beta_b$, and the cuff to the right becomes a corner with cone angle $\beta_b$.
    \item A tuple of slit lengths $\ell \in (0, \infty)^{\mathcal B}$. When we realize the pairs of pants as slit samosas, the two corners coming from the cuffs on both sides of the pants curve $b$ become slit corners with slits of common length $\ell_b$.
    \item A tuple of slit angles $\phi' \in \prod_{b \in {\mathcal B}} (\R/\beta_b'\Z)$ for the cuffs to the left of the pants curves.
    \item A tuple of slit angles $\phi \in \prod_{b \in {\mathcal B}} (\R/\beta_b\Z)$ for the cuffs to the right of the pants curves.
\end{itemize}
These parameters must satisfy the following conditions.
\begin{itemize}
    \item (Cone angle condition) The three cone angles attached to each pair of pants sum to less than $2\pi$. That makes them the corner angles of a unique samosa, as discussed after Definition~\ref{defn:samosa}.
    \item (Slit condition) Once each pair of pants is realized as a slit samosa, its slit parameters specify valid slits that don't collide, as described in and between Definitions~\ref{defn:slit} and~\ref{defn:slit-samosa}. See also Lemma~\ref{lem:bound-on-l-one-slit}. 
\end{itemize}

 A samosa assembly is \emph{degenerate} if it includes a pair of pants which is realized by a degenerate slit samosa, as defined in Definition~\ref{defn:slit-samosa}. A samosa in a samosa assembly is said to be \emph{terminal} if it realizes a terminal pair of pants of the underlying pants assembly, as defined in Definition~\ref{defn:chained-pants-decomp}.
\end{defn}
The cone angle condition takes different forms depending on how the pants cuffs are glued. For example, a pair of pants with two unglued cuffs contracted to marked points $p,q\in \mathcal{P}$ and one cuff glued to the right side of the pants curve $b \in {\mathcal B}$ imposes the inequality
\[(2\pi - \alpha_p) + (2\pi - \alpha_q) + \beta_b < 2\pi. \]
A pair of pants with one unglued cuff contracted to a marked point $p\in\mathcal{P}$, one cuff glued to the left side of $b \in {\mathcal B}$, and one cuff glued to the right side of $a \in {\mathcal B}$ imposes the inequality
\[ (2\pi - \alpha_p) + \beta'_b + \beta_a < 2\pi. \]
Summing these inequalities over all $n-2$ pairs of pants, and recalling that $\beta_b + \beta'_b = 2\pi$ for each $b \in {\mathcal B}$, we get Condition~\eqref{cond:defect-sum}:
\[ 2\pi n - \sum_{p\in\mathcal{P}}\alpha_p + 2\pi(n-3) < 2\pi(n-2). \]

Let $\Assemb_{\alpha, \Upsilon}$ be the space of non-degenerate samosa assemblies with angle defects $\alpha$ and underlying pants assembly $\Upsilon$.\footnote{As we just saw, the definition of a samosa assembly forces $\alpha$ to satisfy Condition~\eqref{cond:defect-sum}. That means $\Assemb_{\alpha, \Upsilon}$ is empty unless Condition~\eqref{cond:defect-sum} holds.} The space $\Assemb_{\alpha, \Upsilon}$ is paramaterized by the $4(n-3)$ parameters $\beta_b, \ell_b, \phi_b, \phi'_b$ indexed by the pants curves $b\in {\mathcal B}$. The range of possible values for $\beta$ is the interior of a polytope in $\R^{n-3}$ which is determined in terms of $\alpha$ by the inequalities above and matches the polytope $\Delta$ of Corollary~\ref{cor:action-angle}. The angles $\phi_b$ and $\phi_b'$ can take any values on their respective circles of length $\beta_b$ and $\beta_b'$. The slit condition in the definition of samosa assemblies (Definition~\ref{defn:samosa-assembly}) can be formulated in terms of an interval of admissible values for $\ell_b$. All small enough positive values of $\ell_b$ give valid slits. The maximum slit lengths depend on $\beta, \phi, \phi'$. Lemma~\ref{lem:bound-on-l-one-slit} gives the necessary and sufficient length bound for a samosa with one slit, and Lemma~\ref{lem:bound-on-l-two-slits} gives a sufficient length bound for a samosa with two slits.
\subsubsection{Realizing a samosa assembly}\label{sec:assembly-instructions-realization} Each non-degenerate samosa assembly in $\Assemb_{\alpha, \Upsilon}$ can be realized as a sphere with a hyperbolic cone metric. This process has two steps.
\begin{enumerate}
    \item Realize each pair of pants as a slit samosa. Its shape is determined, as described in Definition~\ref{defn:samosa-assembly}, by the parameter lists $\beta$, $\ell$, $\phi$, $\phi'$.
    \item Carry out each gluing using a conical cut and paste, as described in Section~\ref{sec:conical-cut-paste}.
\end{enumerate}
For each pants curve $b\in {\mathcal B}$, the assembled surface has a pair of samosas glued along slits of length $\ell_b$. The corners where the slits begin have cone angles $\beta_b' = 2\pi - \beta_b$ and $\beta_b$, so they fit together into a non-singular point. The points where the slits end are non-singular, so they fit together into a whole singularity: a point with cone angle $4\pi$.
\begin{center}
\begin{tikzpicture}[every node/.style={inner sep=0.5mm}]
\node[anchor=south west, inner sep=0mm] at (-6.12, 4.05) {\includegraphics[width=6cm]{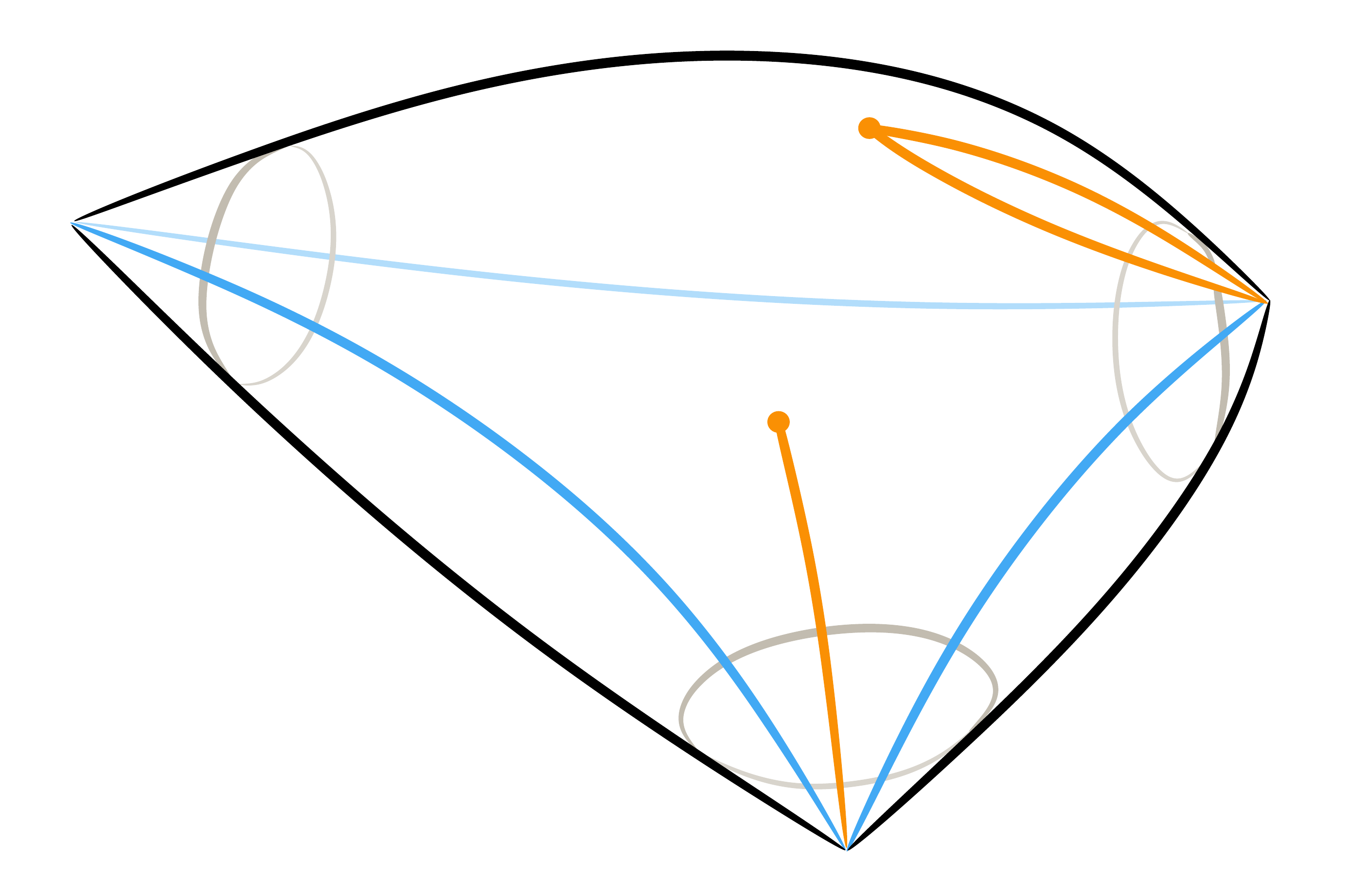}};
\node[anchor=south west, inner sep=0mm] at (-0.27, 7.65) {\includegraphics[width=6cm]{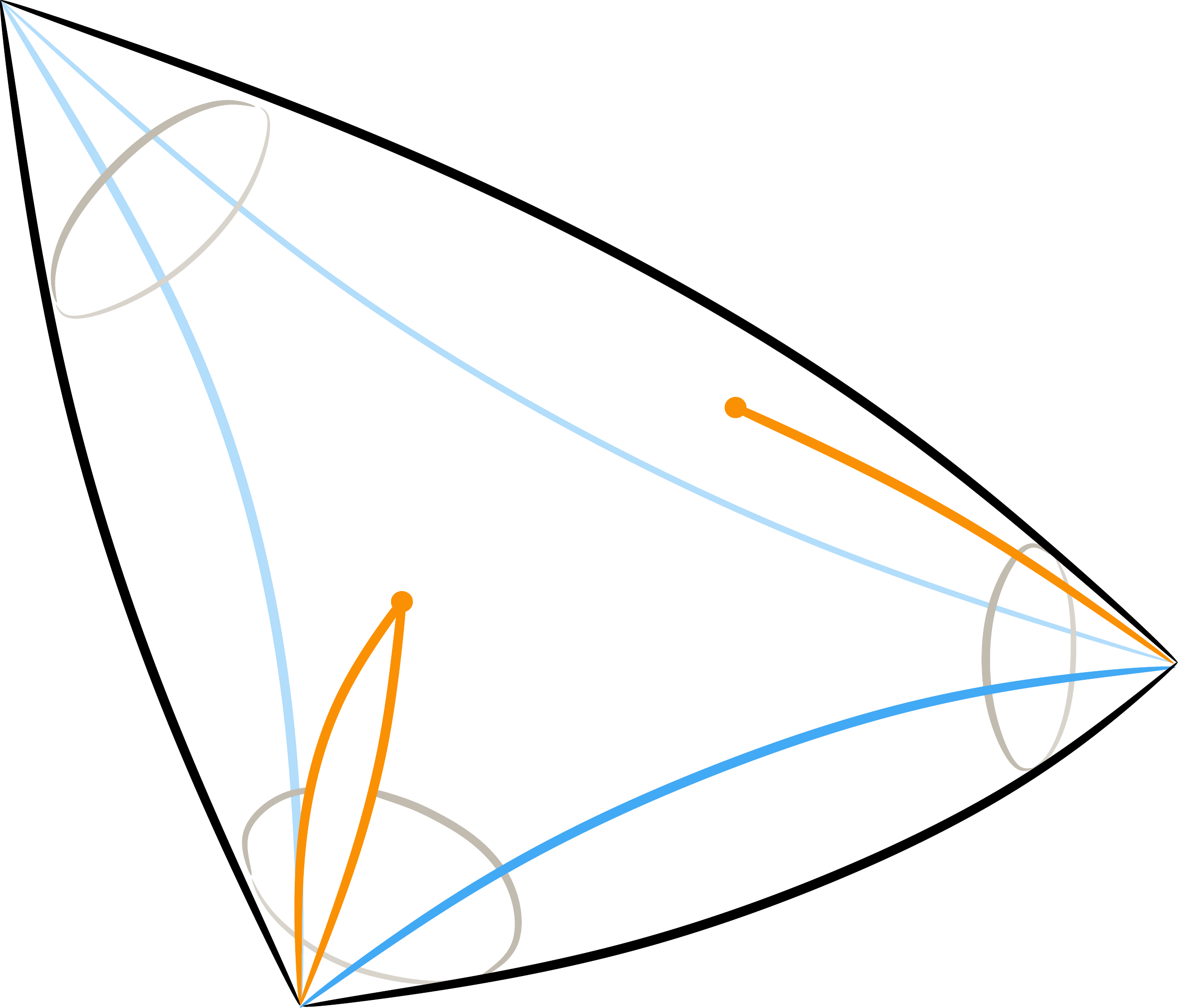}};
\draw[apricot, <->] (-1.7, 7.37) to[bend left=15] (1.43, 9.3);
\draw[apricot, <->] (-1.4, 7.03) to[bend left=15] (1.60, 8.82);
\end{tikzpicture}
\end{center}
For our illustrations, we've chosen samosas from the class defined in Section~\ref{sec:hamantash}, which can be neatly cut and unfolded onto the hyperbolic plane. This view makes it particularly easy to see how the beginnings and endings of slits become non-singular points and whole singularities, respectively. Keep in mind that not all samosas can be unfolded in this way.
\begin{center}
\begin{tikzpicture}[scale=.9, every node/.style={inner sep=0.5mm}]
\node[anchor=south west, inner sep=0mm] at (-4.96, -3.07) {\reflectbox{\includegraphics[width=9cm]{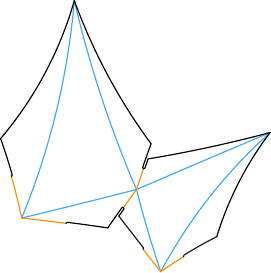}}};
\coordinate (S) at (-0.25, 0.75);
\coordinate (T) at (0.48, -0.67);
\path (0, 0) -- (S) node[apricot, pos=0.6, anchor=east] {$\ell_b$};
\path (0, 0) -- (T) node[apricot, inner sep=0.2mm, pos=0.6, anchor=north east] {$\ell_b$};
\draw ++(-47:0.3) arc (-47:103:0.3) node[pos=0.4, inner sep=0.25mm, anchor=south west] {$\beta_b$};
\node[align=center, anchor=north] at (0.5, -3.4) {\footnotesize The two geodesic arcs of length $\ell_b$ form a loop representing the pants curve $b$.
};
\end{tikzpicture}
\end{center}

For each unglued cuff $p$, the assembled surface has a fractional singularity, with angle defect $\alpha_p$. The glued edges of the slits associated with a pants curve $b \in \mathcal{B}$ form a closed curve representing $b$---a piecewise geodesic curve of length $2\ell_b$ that runs through the whole singularity, with angle $2\pi$ between the in- and out-going segments, and turns by $\beta_b - \pi$ at the non-singular point where the corners of the samosas were glued.

\subsubsection{Marking}\label{sec:marking}
If $\Upsilon$ is a chained pants assembly (Definition~\ref{defn:chained-pants-decomp}), the realizations of most samosa assemblies in $\Assemb_{\alpha, \Upsilon}$ are homeomorphic, up to isotopy, in a canonical way.\footnote{In fact, these realizations should even be diffeomorphic in a canonical way, allowing us to see $\Assemb_{\alpha, \Upsilon}$ as parameterizing different hyperbolic cone metrics on the same smooth sphere. However, these canonical diffeomorphisms seem tricky to construct and work with, so we'll stick to homeomorphisms for simplicity.} In other words, they each come with a distinguished isotopy class of homeomorphisms to a ``generic realization'' sphere $\AssembSphereExt{\Upsilon}$, which corresponds to an element of $\HypCone{\alpha}(\AssembSphereExt{\Upsilon})$. The distinguished isotopy classes come from a disconnected ribbon graph, called the {\em skeleton}, that sits within the pieces of a chained samosa assembly. When you realize a chained samosa assembly as a hyperbolic cone sphere, the pieces of the skeleton fit together into a connected ribbon graph that divides the sphere into disks. We'll first describe the skeleton of a generic samosa assembly, which we'll use to construct $\AssembSphereExt{\Upsilon}$. Then we'll examine the samosa assemblies whose skeletons aren't isomorphic to the generic one.

Within a non-terminal samosa, the skeleton has six vertices: the three corners of the samosa, the two points where the slits end, and one point on the equator segment connecting the slit corners. These are connected by five edges. Two edges, which belong to what we'll call the {\em spine}, run along the equator segment connecting the slit corners. Two more edges, called {\em half-ribs}, run along the slits. The last edge runs along the southern hemisphere from the middle vertex of the spine to the un-slit corner.
\begin{center}
\includegraphics[width=8cm]{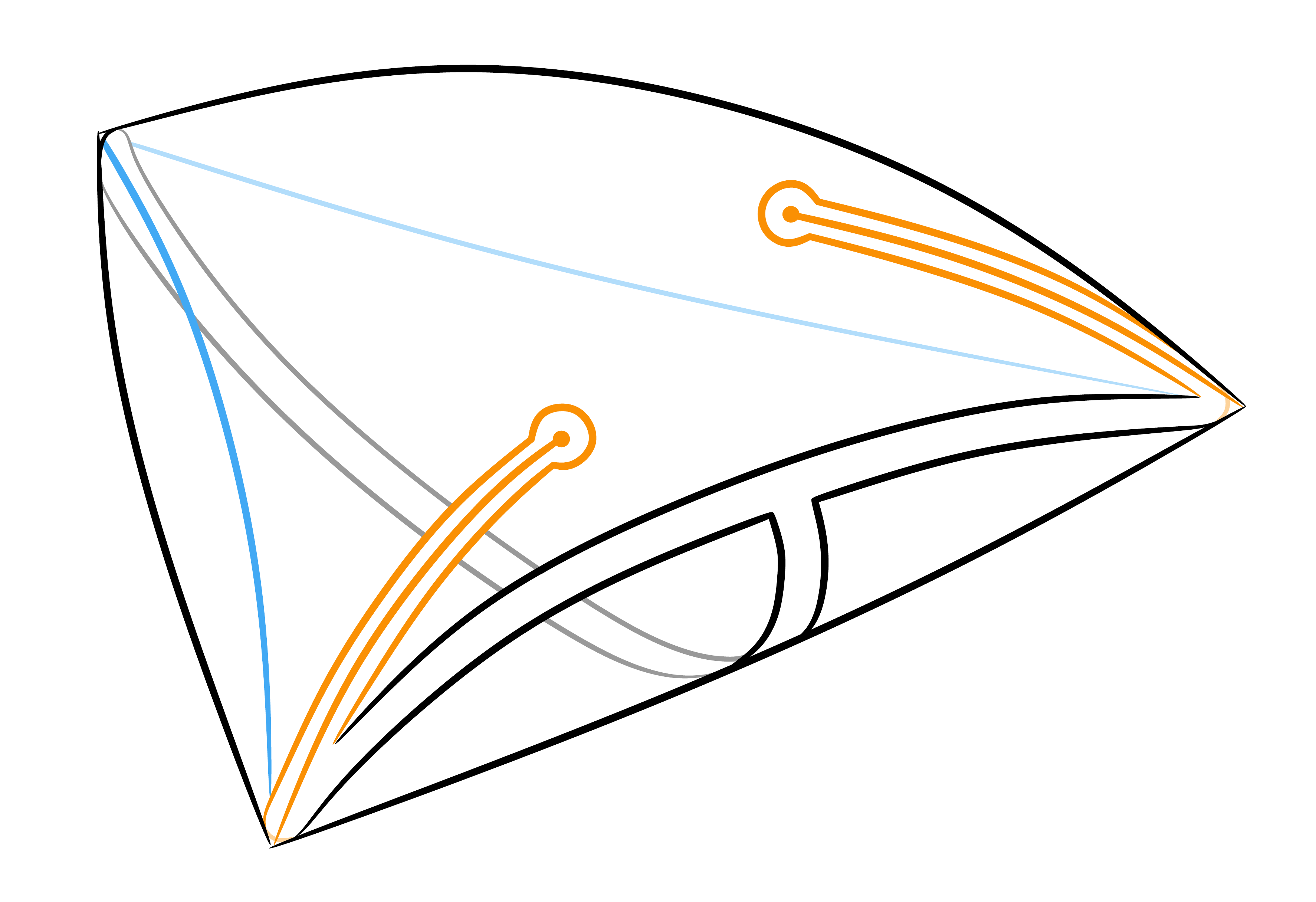}
\end{center}

Within a terminal samosa, the skeleton has four vertices: the three corners of the samosa and the point where the slit ends. These are connected by three edges: one half-rib, which runs along the slit, and two other edges, which run along the equator segments connecting the slit vertex to the un-slit vertices.
\begin{center}
\includegraphics[width=8cm]{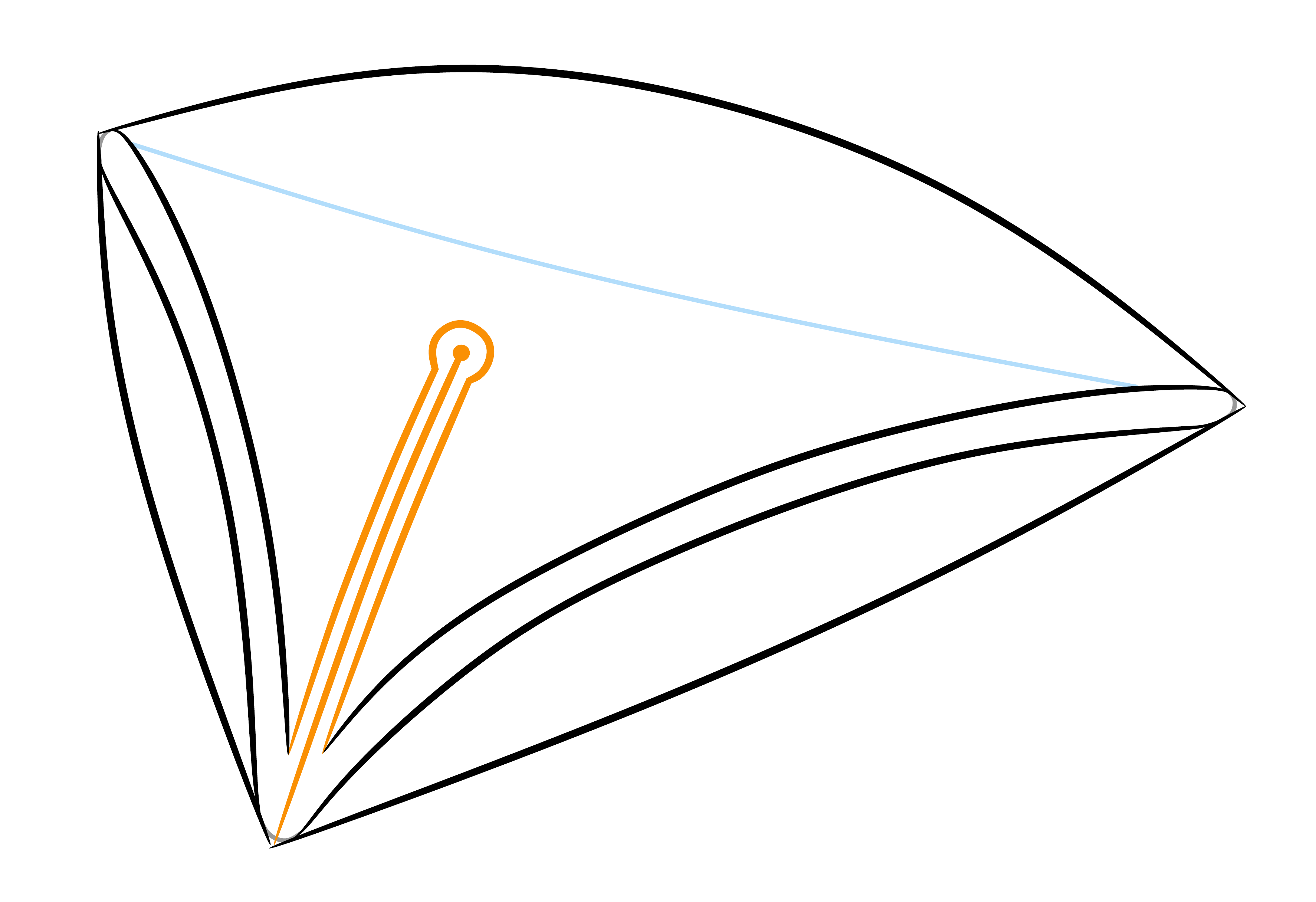}
\end{center}

To realize a samosa assembly, we first cut along the slits. This opens each half-rib into a half-ribbon loop, which looks like one of the halves you'd get if you took a loop made of two ribbon segments and cut it lengthwise down the middle. When we glue the samosas together, each gluing fuses two half-ribs into a {\em rib}, which really is a loop made of two ribbon segments. This joins the pieces of the skeleton into a connected ribbon graph on the assembled sphere.

The construction above only looks at combinatorial features of the samosa assembly. If we apply it to all of the samosa assemblies in $\Assemb_{\alpha, \Upsilon}$ that it can be applied to, we'll end up putting together vertices and edges in the same way each time. That gives us commuting isomorphisms between all of the resulting skeletons. We can thus treat these skeletons like a single ``generic skeleton'' for samosa assemblies modeled on $\Upsilon$.

The skeleton of a realized samosa assembly divides the realization into disks---one for each samosa. That means we can reconstruct the realization, up to isotopy, from the skeleton. More formally, the realization comes with an isotopy class of homeomorphisms to the sphere created from the skeleton by gluing a disk along each boundary component.

Guided by the last two paragraphs, let's build a {\em generic realization} sphere $\AssembSphereExt{\Upsilon}$ by gluing a disk along each boundary component of the generic skeleton for samosa assemblies modeled on $\Upsilon$. If the skeleton of a samosa assembly in $\Assemb_{\alpha, \Upsilon}$ can be constructed using the generic process described earlier, the realization of that samosa assembly comes with an isotopy class of homeomorphisms to $\AssembSphereExt{\Upsilon}$.
\tikzset{%
  on layer/.code={
    \pgfonlayer{#1}\begingroup
    \aftergroup\endpgfonlayer
    \aftergroup\endgroup
  },
  ribbon/.style={
    postaction={
      draw, white,
      line width=0.9mm,
      on layer=foreground
    }
  }
}
\newcommand{\ribbonsing}[2]{
  \fill[#2] (#1) circle (0.165);
  \fill[white, on layer=foreground] (#1) circle (0.12);
}
\begin{center}
\vspace{4mm}
\begin{tikzpicture}[scale=0.9]
  \pgfmathsetmacro{\rearth}{4}
  \pgfmathsetmacro{\tilt}{5}
  \pgfmathsetmacro{\aleaf}{20}
  \pgfmathsetmacro{\awhole}{-30}
  \pgfmathsetmacro{\aspine}{30}
  
  \pgfdeclarelayer{background}
  \pgfdeclarelayer{foreground}
  \pgfsetlayers{background, main, foreground}

  \newcommand{\spherept}[3]{\coordinate (#1) at ({-\rearth*sin((#2)+\tilt)*(1-abs(sin((#3)+\tilt))*sin(\tilt))}, {-\rearth*sin((#3)+\tilt)*cos((#2)+\tilt)});}
  
  \newcommand{\draweq}{\draw ellipse ({\rearth} and {\rearth*sin(\tilt)});}
  \pgfmathsetmacro{\nudge}{-0.015*\rearth*sin(\aspine)}
  \spherept{spine right}{-\aspine}{0}
  \spherept{C1}{55}{30}
  \spherept{C2}{55}{-30}
  \spherept{Cm1}{-55}{30}
  \spherept{Cm2}{-55}{-30}
  \begin{scope}[line width=1.8mm]
    \draw[domain=-\aspine+\tilt:\aspine+\tilt, ribbon] plot ({-\rearth*sin(\x)}, {-\rearth*cos(\x)*sin(\tilt)}) coordinate (spine left);
    \draw[ribbon] (C1) to[bend left=2] (spine left) to[bend left=4] (C2);
    \draw[ribbon] (Cm2) to[bend left=4] (spine right) to[bend right=2] (Cm1);
    
    \foreach \pt in {C1, C2, Cm2, Cm1} {
      \ribbonsing{\pt}{black}
    }
  \end{scope}
  
  \newcommand{\drawcirc}[1]{\draw[#1] ({\zcirc+\nudge}, 0) ellipse ({\rcirc*sin(\tilt)} and {\rcirc});}
  \foreach \lat in {30, 0, -30} {
    \pgfmathsetmacro{\rcirc}{\rearth*cos(\lat)}
    \pgfmathsetmacro{\zcirc}{\rearth*sin(\lat)}
    \pgfmathsetmacro{\nudge}{-0.015*\zcirc}
    
    \coordinate (W) at ({\zcirc-\rcirc*cos(\awhole)*sin(\tilt)+\nudge}, {-\rearth*sin(\awhole)});
    \ribbonsing{W}{apricot}
    
    \begin{pgfonlayer}{background}
    \begin{scope}[lightapricot, line width=0.45mm]
      \clip ({\zcirc+\nudge}, -\rcirc) rectangle (\rearth, \rcirc);
      \drawcirc{double distance=0.6mm}
    \end{scope}
    \end{pgfonlayer}
    \begin{scope}[apricot, line width=0.45mm]
      \clip ({\zcirc+\nudge}, -\rcirc) rectangle (-\rearth, \rcirc);
      \drawcirc{double distance=0.9mm}
    \end{scope}
  }
  
  \foreach \lat in {15, -15} {
    \pgfmathsetmacro{\rcirc}{\rearth*cos(\lat)}
    \pgfmathsetmacro{\zcirc}{\rearth*sin(\lat)}
    \pgfmathsetmacro{\xleaf}{\rearth*sin(\aleaf+\tilt)}
    \pgfmathsetmacro{\zleaf}{\rearth*cos(\aleaf)}
    \pgfmathsetmacro{\nudge}{-0.015*\zcirc}
    
    \draw[domain=0:\aleaf, ribbon, line width=1.8mm] plot ({\zcirc-\rcirc*cos(\x)*sin(\tilt)+\nudge}, {-\rearth*sin(\x+\tilt)}) coordinate (leaf);
    
    
    \ribbonsing{leaf}{}
  }
  
  \draw[line width=0.45mm, on layer=foreground] circle (\rearth);

  \node[align=center, anchor=north] at (0, {-(\rearth+0.5)}) {\footnotesize The generic realization sphere $\AssembSphereExt{\Upsilon}$ for a chained pants assembly $\Upsilon$ with four pairs of pants.};
\end{tikzpicture}
\vspace{4mm}
\end{center}

Now, let's look at the samosa assemblies whose skeletons can't be constructed using the generic process described earlier. These are the ones where a slit lies on an equator segment opposite from an un-slit corner, which would cause a half-rib to collide with another ribbon in the usual construction. To build a skeleton for one of these samosas, we first place the half-ribs along the slits, as usual. Then, whenever an equatorial ribbon would collide with a half-rib, we shorten it to avoid the collision. The offending half-rib ends up attached the wrong way around: the shortened ribbon meets it at the point where the slit ends, rather than at the corner where the slit starts. The construction and assembly of the skeleton then proceeds as usual.

Homotopically, we can connect this construction to the generic one by sliding the attachment point of each shortened ribbon along the boundary of the offending half-rib, bringing it to the corner where the slit starts.
\begin{center}
\vspace{6mm}
\newcommand{\ribbonpoints}{
  \coordinate (C2) at (-1.6, 0);
  \coordinate (A) at (0, 0);
  \coordinate (A') at (2.2, 0);
}
\newcommand{\ribbonvertices}{
  \ribbonsing{C2}{apricot}
  \ribbonsing{A}{apricot}
  \ribbonsing{A'}{black}
}
\begin{tikzpicture}[line width=1.8mm, every edge/.append style={line width=1.8mm}]
\pgfdeclarelayer{background}
\pgfdeclarelayer{foreground}
\pgfsetlayers{background, main, foreground}

\ribbonpoints
\draw[ribbon, apricot] (C2) -- (A);
\draw[ribbon] (A) -- (A');
\ribbonvertices

\begin{scope}[shift={(-3, -1.6)}, line width=1.8mm]
\ribbonpoints
\draw[ribbon, apricot] (C2) -- (A);
\draw (C2) edge[ribbon, out=40,in=160] (A');
\ribbonvertices
\path (C2) ++(16:0.6) coordinate (slideto);
\draw[->, thin, looseness=0.9] (A) ++(20:0.3) to[out=110, in=16] (slideto);
\end{scope}

\begin{scope}[shift={(3, -1.6)}, line width=1.8mm]
\ribbonpoints
\draw[ribbon, apricot] (C2) -- (A);
\draw (C2) edge[ribbon, out=-40,in=200] (A');
\ribbonvertices
\path (C2) ++(-16:0.6) coordinate (slideto);
\draw[->, thin, looseness=0.9] (A) ++(-20:0.3) to[out=-110, in=-16] (slideto);
\end{scope}
\end{tikzpicture}
\end{center}
That's equivalent to doing a half Dehn twist along the corresponding rib of the assembled skeleton. We can slide the attachment point along either side of the offending rib; equivalently, we can do the half Dehn twist in either direction. Choosing a half-twist direction for each offending half-rib picks out an isotopy class of homeomorphisms from the realized samosa assembly to the generic realization $\AssembSphereExt{\Upsilon}$. We can switch any of the chosen half-twist directions by sliding the attachment point back up the half-rib and down the other side, changing the isotopy class of homeomorphisms by a full Dehn twist along the corresponding rib. As another way to visualize the choice of identification, you can imagine pushing each offending slit infinitesimally off the equator, in one direction or the other. The equatorial ribbon can then extend to the corner like it does in the generic construction.

Let's say $\Upsilon$ has $n$ unglued cuffs, so $\AssembSphereExt{\Upsilon}$ has $n + (n-3)$ marked points. Erasing the $n-3$ marked points on the ribs gives a sphere $\AssembSphere{\Upsilon}$ with $n$ marked points, which comes with an inclusion $\AssembSphereExt{\Upsilon} \hookrightarrow \AssembSphere{\Upsilon}$. The generic skeleton picks out a distinguished geometric presentation of $\pi_1\AssembSpherePk{\Upsilon}$. To see it, first construct $\pi_1\AssembSpherePk{\Upsilon}$ using the spine as an extended base point. We can do this because the spine is compact and simply connected. We can now draw a counterclockwise loop around each puncture by following the boundary of the ribbon that connect the vertex at that puncture to the spine. Ordering the set of loops counterclockwise around the spine will pick out a distinguished geometric presentation $c_1, \ldots, c_n$ of $\pi_1\AssembSpherePk{\Upsilon}$, once we refine that cyclic ordering to a linear one. We choose the unique linear ordering in which the first two and last two loops enclose the punctures on the terminal samosas. Each loop encloses a different puncture of $\AssembSphere{\Upsilon}$, identifying the set of punctures with $\{1, \ldots, n\}$.

Recall when we realize a genus-0 pants assembly by gluing pairs of pants together, the glued cuffs become pants curves that form a pants decomposition of the assembled sphere. Correspondingly, the ribs form a pants decomposition of $\AssembSpherePk{\Upsilon}$, which turns out to be the standard pants decomposition associated with the distinguished geometric presentation of $\pi_1\AssembSpherePk{\Upsilon}$ described above.

We can now think of the realization process as a {\em realization map}
\[ R \maps \Assemb^\text{gen}_{\alpha, \Upsilon} \to \HypCone{\alpha}(\AssembSphereExt{\Upsilon}), \]
where $\Assemb^\text{gen}_{\alpha, \Upsilon} \subset\Assemb_{\alpha, \Upsilon}$ is the space of samosa assemblies in which no slit lies on an equator segment opposite from an un-slit corner and $\HypCone{\alpha}(\AssembSphereExt{\Upsilon})$ is the space of hyperbolic cone structures on $\AssembSphereExt{\Upsilon}$ introduced in Section~\ref{sec:hyperbolic-cone-structures}. This formalizes the intuition that samosa assemblies parameterize hyperbolic cone structures.
\subsection{Parameterizing hyperbolic cone structures}
We'd like to understand the realization map as a parameterization of hyperbolic cone structures (Definition~\ref{defn:hyperbolic-cone-structure}). From now on, for simplicity, we'll only consider the case where $\Upsilon$ is a chained pants assembly (Definition~\ref{defn:chained-pants-decomp}). That will be enough to accomplish the original goal of understanding Maret's action-angle coordinate system.

We expect the realization map $R$ to be injective on its whole domain $\Assemb^\text{gen}_{\alpha, \Upsilon}$, but we won't try to prove this. For our purposes, it's enough to show that $R$ is injective on each of the open subsets $\Assemb_{\alpha, \Upsilon}^\varepsilon\subset \Assemb^\text{gen}_{\alpha, \Upsilon}$ determined by making a choice of hemisphere interior for each slit, represented by a tuple $\varepsilon\in\{\text{north},\,\text{south}\}^{2(n-3)}$. The hemisphere choice $\varepsilon$ limits the domain of each slit angle to positive or negative values.

The $4(n-3)$ parameters that define a samosa assembly give a natural inclusion of $\Assemb_{\alpha, \Upsilon}$ into a product of $\R^{2(n-3)}$ with $2(n-3)$ circles, whose lengths depend on $\beta$. The real-valued parameters are $\beta$ and $\ell$, and the circle-valued ones are $\phi$ and $\phi'$. Restricting each slit to the interior of a particular hemisphere refines this inclusion to $\Assemb_{\alpha, \Upsilon}^\varepsilon\subset \R^{4(n-3)}$. From this perspective, the realization map $R\colon \Assemb_{\alpha, \Upsilon}^\varepsilon \to \HypCone{\alpha}(\AssembSphereExt{\Upsilon})$ turns out to be a chart for $\HypCone{\alpha}(\AssembSphereExt{\Upsilon})$.
\begin{thm}\label{thm:realization-map-local-homeo}
For any chained pants assembly $\Upsilon$, the realization map $R\colon \Assemb_{\alpha, \Upsilon}^\varepsilon \to \HypCone{\alpha}(\AssembSphereExt{\Upsilon})$ is a homeomorphsim onto its image with respect to the $\mathcal{C}^0$ topology on $\HypCone{\alpha}(\AssembSphereExt{\Upsilon})$, defined in Section~\ref{sec:space-of-hyperbolic-cone-structures}. Moreover, if Conjecture~\ref{smooth-atlas} holds, $R$ is a diffeomorphism onto its image.
\end{thm}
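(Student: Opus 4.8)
The plan is to factor $R$ through the triangulation charts $\varphi_{\mathcal T}$ of Proposition~\ref{length-homeo}, which present the $\mathcal C^0$ topology on $\HypCone{\alpha}(\AssembSphereExt{\Upsilon})$ in terms of finitely many geodesic arc lengths. Since each $\varphi_{\mathcal T}$ is a homeomorphism onto its image and the sets $U_{\mathcal T}$ cover $\HypCone{\alpha}(\AssembSphereExt{\Upsilon})$, it suffices to show that $R$ is injective and that, near each $\sigma\in\Assemb_{\alpha,\Upsilon}^\varepsilon$, there is a triangulation $\mathcal T$ of $\AssembSphereExt{\Upsilon}$ and a neighborhood $V$ of $\sigma$ with $R(V)\subset U_{\mathcal T}$ for which $\varphi_{\mathcal T}\circ R|_V$ is a homeomorphism onto its image in $\R_{>0}^{6n-15}$; then $R|_V=\varphi_{\mathcal T}^{-1}\circ(\varphi_{\mathcal T}\circ R|_V)$ is a composition of topological embeddings, and these local embeddings patch because $\HypCone{\alpha}(\AssembSphereExt{\Upsilon})$ is Hausdorff and $R$ is globally injective. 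We build $\mathcal T$ from the skeleton: each face is a geodesic triangle contained in a single cut-open samosa, and each edge is one of the arcs whose length is controlled by that samosa's data---an equator segment, a slit, the geodesic from a slit endpoint back to a corner, or a diagonal of the (at most) pentagon obtained by cutting a samosa along its slit(s). Because a merged slit-start point is non-singular, an equator segment that would terminate at such a point is continued geodesically into the adjacent samosa until it meets a cone point; the terminal samosas, which carry fewer skeleton vertices, and the non-generic-skeleton configurations, are triangulated separately. The non-collision hypotheses of Definitions~\ref{defn:slit}--\ref{defn:slit-samosa}, the bounds of Lemmas~\ref{lem:bound-on-l-one-slit} and~\ref{lem:bound-on-l-two-slits}, and the hemisphere restriction imposed by $\varepsilon$ guarantee that, on a neighborhood of $\sigma$, all of these arcs are realized as unbroken geodesics between cone points, so indeed $R(V)\subset U_{\mathcal T}$.

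With $\mathcal T$ fixed, $\varphi_{\mathcal T}\circ R$ is computed by elementary hyperbolic trigonometry, one samosa at a time. The corner angles of the samosa realizing a given pair of pants are the fixed values $2\pi-\alpha_p$ together with $\beta_b$ or $\beta_b'=2\pi-\beta_b$, and these determine the lengths of its equator segments through the hyperbolic law of cosines~\eqref{eq:hyperbolic-law-of-cosines-angles}. A slit contributes its own length $\ell_b$ directly, and the length of an arc from a slit endpoint to a corner of the same samosa is obtained from the triangle formed by the slit and an equator segment, using the four-parts formula~\eqref{eq:four-part-formula} and the law of cosines exactly as in the proof of Lemma~\ref{lem:bound-on-l-one-slit}; the remaining diagonal lengths are similar. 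All of these are smooth functions of the $4(n-3)$ parameters $(\beta,\ell,\phi,\phi')$, so $\varphi_{\mathcal T}\circ R$ is smooth, hence continuous.

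To see that $\varphi_{\mathcal T}\circ R$ is a homeomorphism onto its image we exhibit a continuous (indeed smooth) left inverse $\psi$ defined on a neighborhood of the image in $\R_{>0}^{6n-15}$. From the equator-segment lengths of each samosa we recover its corner angles by inverting~\eqref{eq:hyperbolic-law-of-cosines-angles}, hence recover $\beta$; the slit length $\ell_b$ appears as an edge length; and from the length of the arc joining a slit endpoint to a corner, together with $\ell_b$ and the relevant equator-segment length, we recover $\cos\phi_b$ from the law of cosines, and then $\phi_b$ itself because the sign of $\sin\phi_b$ is fixed by the hemisphere choice $\varepsilon$---and likewise for $\phi_b'$. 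Thus $\psi\circ\varphi_{\mathcal T}\circ R=\id$ on $V$, so $\varphi_{\mathcal T}\circ R$ is a topological embedding and $R$ is a homeomorphism onto its image. These reconstruction formulas are geometric identities valid in any hyperbolic cone metric, so the parameters $(\beta,\ell,\phi,\phi')$ of a samosa assembly are determined by arc lengths of $R(\sigma)$; hence $R$ is globally injective (and, both spaces having dimension $4(n-3)$, its image is open). For the last sentence, assume Conjecture~\ref{smooth-atlas}: then each $\varphi_{\mathcal T}$ is a diffeomorphism onto its image, the above formulas make $\varphi_{\mathcal T}\circ R$ a smooth map with the smooth left inverse $\psi$, so it is a smooth embedding, and $R=\varphi_{\mathcal T}^{-1}\circ(\varphi_{\mathcal T}\circ R)$ is a diffeomorphism onto its image.

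\textbf{Main obstacle.} The delicate part is the first step: choosing, for each region of $\Assemb_{\alpha,\Upsilon}^\varepsilon$, a triangulation whose edges are \emph{provably} unbroken geodesics between cone points. This forces one to control where slits, continued equator segments, and the new diagonals run relative to the whole singularities and the merged non-singular points, to treat the terminal samosas and the non-generic-skeleton phenomena of Section~\ref{sec:marking} on their own terms, and to check that finitely many such triangulations cover $\Assemb_{\alpha,\Upsilon}^\varepsilon$. Once this combinatorial model is pinned down, the trigonometric identities and the construction of $\psi$ are routine.
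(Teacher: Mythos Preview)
Your overall strategy---factor $R$ through the length charts $\varphi_{\mathcal T}$ of Proposition~\ref{length-homeo} and reduce everything to hyperbolic trigonometry---is exactly the paper's strategy. The gap is in the specific triangulation you propose, and it propagates to your recovery of the parameters.

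The triangulations used in Section~\ref{sec:parametrization-using-triangulations} have their vertices at the $2n-3$ \emph{marked points} of $\AssembSphereExt{\Upsilon}$: the $n$ un-slit corners (fractional singularities) and the $n-3$ slit \emph{endpoints} (whole singularities). The slit \emph{start} corners are not marked: after gluing, the two corners at pants curve $b$ have angles $\beta_b$ and $\beta_b'=2\pi-\beta_b$, so they merge into an ordinary non-singular point. Consequently neither a slit (which runs from that non-singular point to a whole singularity) nor an equator segment adjacent to a slit corner is an arc between marked points, so neither can be a triangulation edge. Your fix of ``continuing geodesically into the adjacent samosa until it meets a cone point'' is not controllable: the direction of entry into the neighboring samosa depends on $\phi_b,\phi_b'$, and the continuation need not hit a marked point before winding around or crossing another slit. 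This also breaks your inverse: you recover $\ell_b$ by reading the slit length off as an edge of $\mathcal T$, and you recover $\beta$ from equator-segment lengths via~\eqref{eq:hyperbolic-law-of-cosines-angles}, but neither quantity is an edge length in any admissible $\mathcal T$.

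The paper's remedy is to triangulate with arcs whose endpoints are genuinely marked. It first cuts $\AssembSphereExt{\Upsilon}$ along, for each $b$, the shortest loop $c_b$ based at the whole singularity in the class of $b$ (a \emph{kinked} geodesic with kink angle $\kappa_b$), obtaining two joker's hats and $n-4$ V-pieces; these pieces are then triangulated by arcs from whole singularities to un-slit corners and by loops around un-slit corners. All endpoints are marked points. The parameters are recovered not from slits and equator segments but from the intrinsic pair $(c_b,\kappa_b)$ via Lemma~\ref{lem:kink-angle} (formulas~\eqref{eq:angle-beta}--\eqref{eq:length-ell} give $\beta_b,\ell_b$), and then the slit angles from Lemma~\ref{lem:slit-angle} using the arcs $\lambda_a,\lambda_b$ (and, in the overlapping case $\text{V}_2$, the extra arc $\xi$ and angle $\eta$). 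Your identification of the ``main obstacle'' is spot on; what is missing is this particular choice of arcs and the kink-angle trick that makes the trigonometry close up.
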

\begin{proof}
The proof unfolds across several sections. We start by proving in Section~\ref{sec:proof-thm-2-injectivity} that the realization map is injective. Next, in Section~\ref{sec:proof-thm-2-continuity}, we show that the realization map is continuous, using an explicit triangulation of $\AssembSphereExt{\Upsilon}$ that we describe in Sections~\ref{sec:proof-thm-2-triangulating-joker's-hats}--\ref{sec:proof-thm-2-triangulating-V-pieces-case-V2}. Finally, we show in Section~\ref{sec:proof-thm-2-continuity-inverse-map} that the inverse of the realization map is also continuous, completing the proof that the realization map is a homeomorphism onto its image. We go on to show in Section~\ref{sec:proof-thm-2-smoothness} that if Conjecture~\ref{smooth-atlas} holds, the realization map is a diffeomorphism onto its image. Several of our arguments will rely on two trigonometric lemmas, proven in Section~\ref{sec:proof-thm-2-two-trigonometric-lemmas}.
\end{proof}
\subsubsection{Two trigonometric lemmas}\label{sec:proof-thm-2-two-trigonometric-lemmas}
To prove Theorem \ref{thm:realization-map-local-homeo}, we need the following lemmas.
\begin{lem}\label{lem:kink-angle}
A pants curve $b$ on the assembled sphere
is associated with a slit of length $\ell_b$ on each adjacent samosa. Gluing along these slits creates a whole singularity. Let $c_b$ be the length of the shortest representative of $b$ that goes through the whole singularity. This curve can, and typically will, be kinked at the whole singularity.\footnote{This curve is homotopic to, but different from, the piecewise geodesic curve formed by the edges of the slit, which we discussed in Section~\ref{sec:assembly-instructions-realization}.} Let $\kappa_b\in (-\pi,\pi)$ be the kink angle---signed positive if the curve turns left at the singularity, and negative if the curve turns right.\footnote{In the illustration, the curve turns left.}
\begin{center}
\begin{tikzpicture}[every node/.style={inner sep=0.5mm}]
\node[anchor=south west, inner sep=0mm] at (-3.76, -3.06) {\includegraphics[width=6cm]{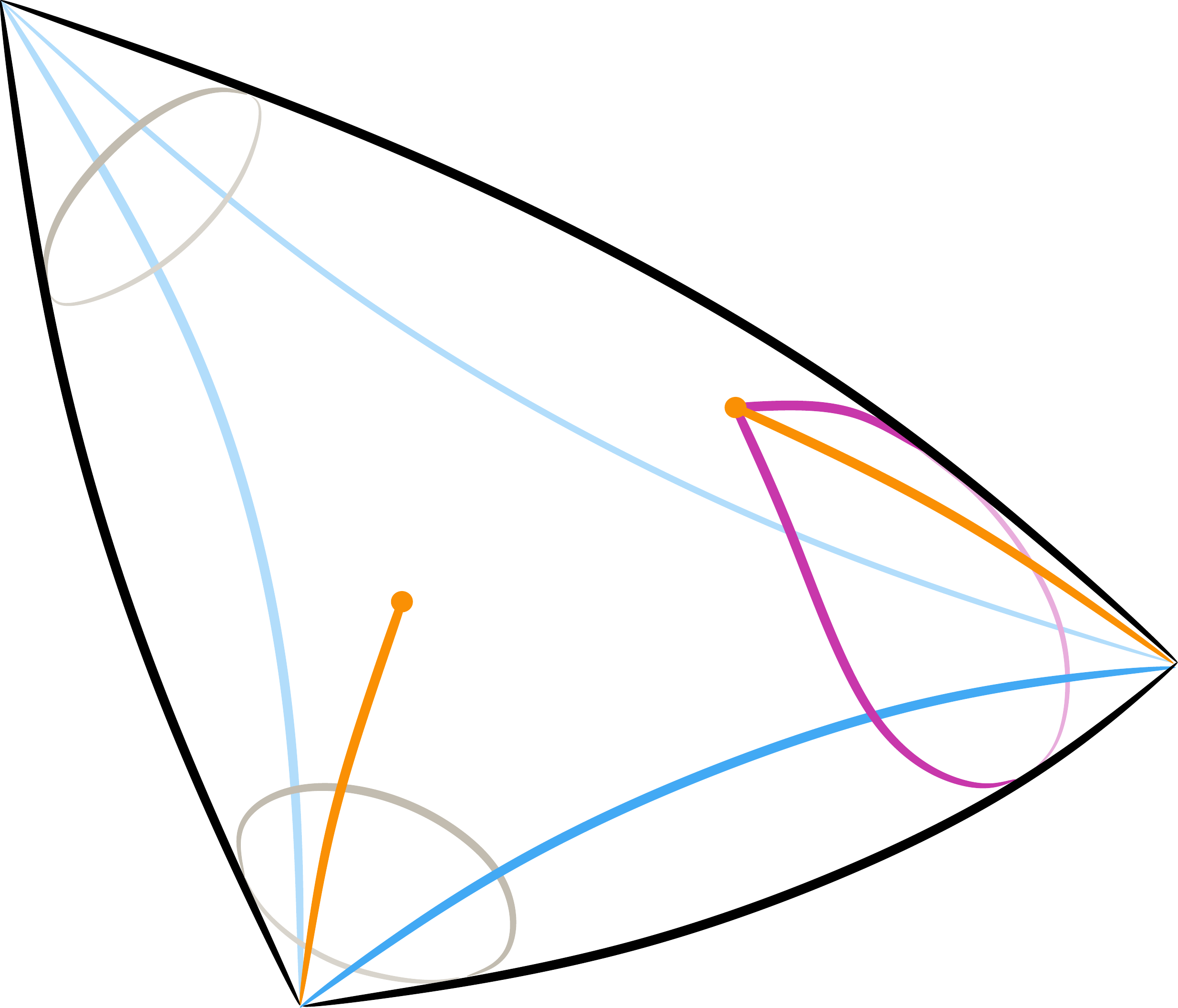}};
\node[anchor=south west, inner sep=0mm] at (2, -0.5) {\includegraphics[width=8cm]{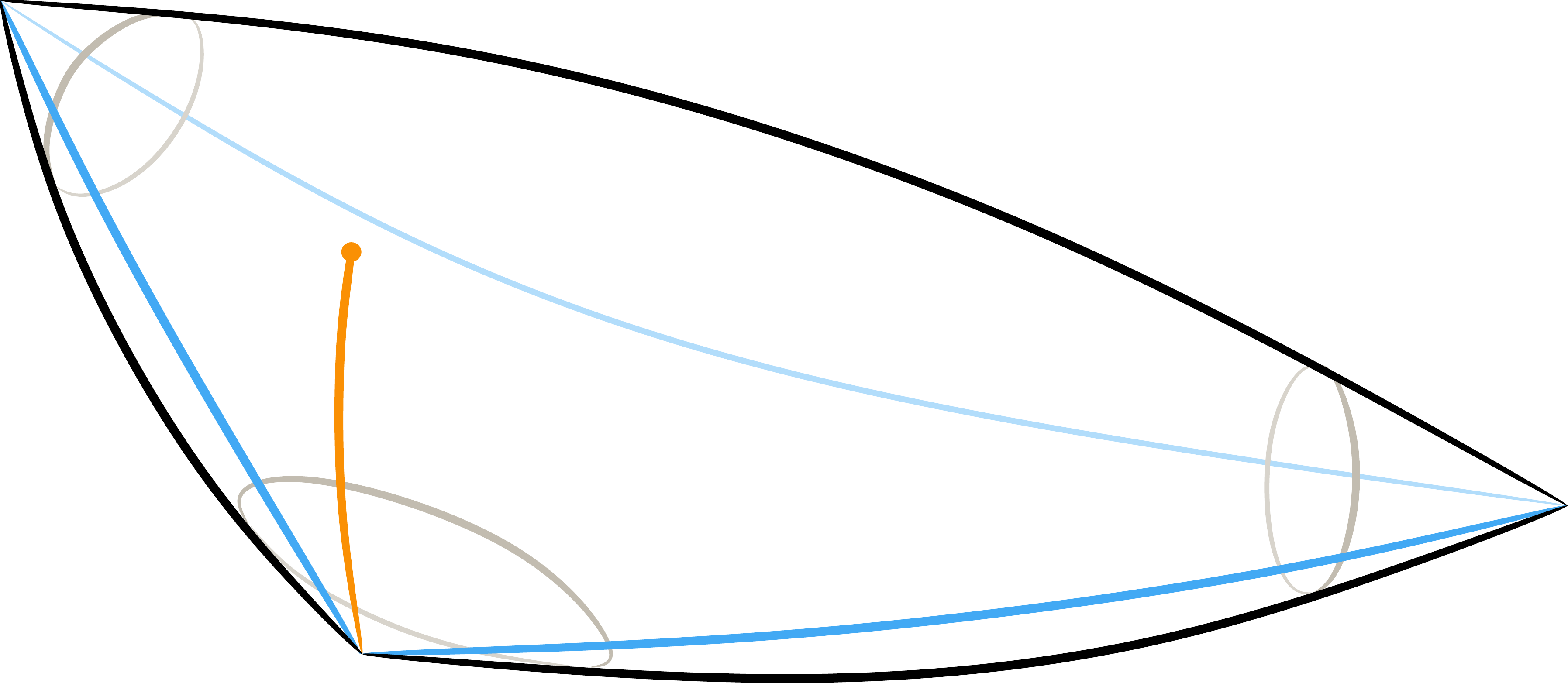}};

\coordinate (B) at (2.24, -1.3);
\draw[mauve] ++(-1:0.3) arc (-1:-60:0.3) node[inner sep=0.2mm, outer sep=0.5mm, fill=white, rounded corners=0.5mm, anchor=30] {$\kappa_b$};
\node[mauve] at (0.3, -1.3) {$c_b$};
\path (0, 0) edge[draw=none, bend left=6] node[apricot, inner sep=0.2mm, outer sep=1.2mm, fill=white, rounded corners=0.5mm, midway, anchor=north] {$\ell_b$} (B);

\draw[apricot] (1.35, -0.1) edge[<->, bend left=10] node[circle, anchor=-50] {\textnormal{glue}} (2.5, 0.7);
\end{tikzpicture}

\begin{tikzpicture}[scale=.8, every node/.style={inner sep=0.5mm}]
\node[anchor=south west, inner sep=0mm] at (-9.01, -5.38) {\reflectbox{\includegraphics[width=8cm, angle=-90]{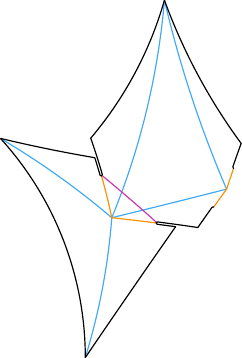}}};
\coordinate (A) at (-1.77, 0.44);
\coordinate (B) at (0.23, -1.88);
\path (A) -- (B) node[mauve, midway, anchor=north east] {$c_b$};
\path (0, 0) -- (A) node[apricot, pos=0.6, anchor=south west] {$\ell_b$};
\path (0, 0) -- (B) node[apricot, pos=0.45, anchor=west] {$\ell_b$};
\draw ++(160:0.3) arc (160:-75:0.3) node[midway, inner sep=1.2mm, anchor=-150] {$\beta_b$};
\draw[mauve] (A) ++(-20:0.4) arc (-20:-46:0.4);
\draw[mauve] (A) ++(-46:0.4) node[anchor=north east] {$\kappa_b/2$};
\draw[mauve] (B) ++(102:0.4) arc (102:127:0.4);
\draw[mauve] (B) ++(102:0.4) node[inner sep=1.2mm, anchor=west] {$\kappa_b/2$};
\end{tikzpicture}
\end{center}
The samosa assembly parameters $\ell_b$, $\beta_b$ are related to $c_b$, $\kappa_b$ by the equations
\begin{align}
\cosh (c_b) & = \cosh (\ell_b)^2-\sinh(\ell_b)^2\cos(\beta_b)\label{eq:length-kinked-geodesic}\\
\tan (\kappa_b/2) & = \frac{1+\cos(\beta_b)}{\sin(\beta_b)\cosh(\ell_b)}. \label{eq:kinked-angle}
\end{align}
If $\beta_b=\pi$, then $c_b=2\ell_b$ and $\kappa_b =0$.
\end{lem}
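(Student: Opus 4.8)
The plan is to extract both identities from a single hyperbolic triangle living in the developing image of a collar neighbourhood of the pants curve $b$. The conical cut and paste of Section~\ref{sec:conical-cut-paste} glues a neighbourhood of the slit in one adjacent samosa to a neighbourhood of the slit in the other, producing a hyperbolic cone surface---a collar of $b$---whose intrinsic geometry depends only on $\ell_b$ and $\beta_b$; the slit angles $\phi_b,\phi_b'$ and the remaining shape of the samosas control how this collar sits inside the assembled sphere, but not its metric. As recorded in Section~\ref{sec:assembly-instructions-realization}, after the gluing the two slit edges become geodesic arcs of length $\ell_b$ joining the non-singular point $P$ (where the $\beta_b$- and $\beta_b'$-corners were fused) to the whole singularity $W$ (cone angle $4\pi$); together they form a piecewise geodesic loop representing $b$ that makes angle $\beta_b$ at $P$ and runs straight through $W$. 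Developing this loop gives two segments of length $\ell_b$ from a lift $\tilde W$ of $W$ to a lift $\tilde P$ of $P$ and on to $\rho(b)\tilde W$, meeting at $\tilde P$ with angle $\beta_b$.

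The curve $c_b$ is, by definition, the shortest loop freely homotopic to $b$ that passes through $W$; it is geodesic away from $W$, and since the only lift of a cone point strictly between two consecutive lifts of $W$ in the developed collar is a lift of $P$, one period of its lift is the single geodesic segment $\tilde W\,\rho(b)\tilde W$---routing through $\tilde P$ would merely return the longer piecewise geodesic loop of length $2\ell_b$. Hence $c_b$ is the base of the isosceles hyperbolic triangle $T$ with apex $\tilde P$, apex angle $\beta_b$, and equal legs $\ell_b$. The hyperbolic law of cosines applied to $T$ yields
\[ \cosh(c_b)=\cosh(\ell_b)^2-\sinh(\ell_b)^2\cos(\beta_b), \]
which is \eqref{eq:length-kinked-geodesic}; when $\beta_b=\pi$ the apex is straight, so $\tilde W,\tilde P,\rho(b)\tilde W$ are collinear and $c_b=2\ell_b$.

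For the kink angle, let $\psi$ be the common base angle of $T$. Each of the two ends of $c_b$ at $W$ runs alongside one of the two slit edges, making angle $\psi$ with it; and the two slit-edge directions sit exactly $2\pi$ apart in the $4\pi$ tangent cone at $W$, because each of the two $2\pi$-sectors of that cone comes from one of the two glued samosas. Consequently the two ends of $c_b$ subtend $2\pi+2\psi$ on one side of $W$ and $2\pi-2\psi$ on the other; since a curve passes straight through a $4\pi$-cone point precisely when it subtends $2\pi$ on each side, this forces $\kappa_b=2\psi$ up to sign, the sign being the one that makes $\kappa_b$ agree with the stated formula---it is determined by which side of $P$ the base of $T$ lies on, equivalently by the sign of $\sin\beta_b$. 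To compute $\psi$, drop the perpendicular from $\tilde P$ to the midpoint of the base of $T$; this splits $T$ into two congruent hyperbolic right triangles with hypotenuse $\ell_b$ and acute angles $\psi$ and $\beta_b/2$, so the right-angled identity $\cosh(\ell_b)=\cot(\psi)\cot(\beta_b/2)$ gives $\tan(\psi)=\cot(\beta_b/2)/\cosh(\ell_b)$. Combining this with the half-angle identity $\cot(\beta_b/2)=(1+\cos\beta_b)/\sin\beta_b$ and $\kappa_b=2\psi$ gives
\[ \tan(\kappa_b/2)=\frac{1+\cos(\beta_b)}{\sin(\beta_b)\cosh(\ell_b)}, \]
which is \eqref{eq:kinked-angle}; for $\beta_b=\pi$ the right-hand side is $0$, consistent with $\psi=0$ and hence $\kappa_b=0$.

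The one genuinely delicate point is the first paragraph---identifying the intrinsic geometry of the collar of $b$ and, in particular, checking that the collar wedge at $P$ has angle $\beta_b$ (independently of the slit angles) and that the two slit-edge directions are antipodal in the $4\pi$ tangent cone at $W$. This is a careful but elementary bookkeeping of the conical cut and paste identifications of Section~\ref{sec:conical-cut-paste}, carried out on a fundamental domain of the collar's universal cover. Once that local picture is in place, everything else is plane hyperbolic trigonometry.
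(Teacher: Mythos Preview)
Your argument is correct and follows the same route as the paper: both identify the isosceles hyperbolic triangle with legs $\ell_b$ obtained by developing the slit-edge loop, and both read off \eqref{eq:length-kinked-geodesic} from the first law of cosines. For \eqref{eq:kinked-angle} the paper instead applies the second (angle) law of cosines directly to the isosceles triangle and then removes absolute values via the sign convention on $\kappa_b$, whereas you bisect the triangle and use the right-angled identity $\cosh(\ell_b)=\cot\psi\,\cot(\beta_b/2)$---an equivalent and equally short derivation. One small imprecision worth flagging: in the paper's development the interior apex angle is $2\pi-\beta_b$, not $\beta_b$, so your ``half-apex $\beta_b/2$'' should strictly read $\pi-\beta_b/2$ when $\beta_b>\pi$; but since $\cos(2\pi-\beta_b)=\cos\beta_b$ and you absorb the sign of $\kappa_b$ into the sign of $\sin\beta_b$ anyway, the final formulas are unaffected.
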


\begin{proof}
We have an isosceles triangle with two sides of length $\ell_b$, which meet at an interior angle of $2\pi-\beta_b$. The third side has length $c_b$, and it makes an angle of $\kappa_b/2$ with each of the other two sides. The hyperbolic laws of cosines---formulas~\eqref{eq:hyperbolic-law-of-cosines-lengths} and~\eqref{eq:hyperbolic-law-of-cosines-angles} from Appendix~\ref{apx:trig-formulae}---give
\begin{align*}
\cosh(c_b) & = \cosh (\ell_b)^2-\sinh(\ell_b)^2\cos(\beta_b) \\
\sin (|\kappa_b|/2)\,|\sin (\beta_b)|\,\cosh(\ell_b) & = \cos (\kappa_b/2) +\cos (\kappa_b/2)\cos(\beta_b).
\end{align*}
Our convention for the sign of the kink angle ensures that $\kappa_b$ is positive when $\beta_b<\pi$ and negative when $\beta_b>\pi$. That lets us rewrite the second relation without the absolute values:
\[ \sin (\kappa_b/2) \sin (\beta_b) \cosh(\ell_b)=\cos (\kappa_b/2) +\cos (\kappa_b/2)\cos(\beta_b). \]
Dividing by $\cos (\kappa_b/2)$ gives the desired formula.
\end{proof}

\begin{rem}\label{rem:angle-beta-and-length-ell}
Equations \eqref{eq:length-kinked-geodesic} and \eqref{eq:kinked-angle} can be rewritten to express the samosa assembly parameters $\ell_b$, $\beta_b$ in terms of $c_b$, $\kappa_b$ instead. This can be seen directly using hyperbolic trigonometry in the same isosceles triangles. The equations are
\begin{align}
\cos(\beta_b) & = \sin(\kappa_b/2)^2\cosh(c_b)-\cos(\kappa_b/2)^2. \label{eq:angle-beta}
\end{align}
Plugging the value of $\cos(\beta_b)$ into~\eqref{eq:length-kinked-geodesic} and simplifying shows that
\begin{align}
 \cosh(\ell_b)^2& = \frac{\cos(\kappa_b/2)^2}{\cos(\kappa_b/2)^2+\tanh (c_b/2)^2}. \label{eq:length-ell}   
\end{align}
If we know $c_b$, $\kappa_b$, we can determine $\beta_b$ from the sign of $\kappa_b$ and Equation~\eqref{eq:angle-beta}, and we can determine $\ell_b$ from Equation~\eqref{eq:length-ell}. The dependence of $\beta_b$ on the sign of $\kappa_b$ will be made explicit later, in Equation~\eqref{eq:beta-as-function-of-h}.
\end{rem}

\begin{lem}\label{lem:slit-angle}
On a samosa hemisphere containing one or two slits, the following formulas relate the slit angles to the slit lengths, the lengths of equator segments, and the distances from the slit endpoints to un-slit corners and to each other.
\begin{itemize}
    \item First, consider a hemisphere containing a single slit at a corner $b$. The slit angle $\phi_b$ is related to the length $d_{b}$ of the equator segment joining the slit corner to an un-slit corner with apex $p\in\mathcal{P}$, the slit length $\ell_b$, and length $\lambda_b$ of the shortest geodesic arc joining the endpoint of the slit to $p$ by the equation
    \begin{equation}\label{eq:slit-angle-case-one-slit}
    \cos(\phi_b)=\frac{\cosh(d_{b})\cosh(\ell_b)-\cosh(\lambda_b)}{\sinh(d_{b})\sinh(\ell_b)}.
    \end{equation}
    \begin{center}
    \begin{tikzpicture}[scale=.9, every node/.style={inner sep=0.5mm}]
    \node[anchor=south west, inner sep=0mm] at (-3.21, -0.03) {\reflectbox{\includegraphics[width=4.05cm, angle=-90]{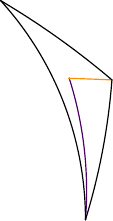}}};
    \coordinate (A) at (5.59, 1.06);
    \coordinate (B) at (-3.18, 4.46);
    \coordinate (S) at (-0.06, 1.7);
    \draw[apricot] ++(14:0.3) arc (14:88:0.3) node[midway, anchor=south west] {$\phi_b$};
    \path (0, 0) -- (S) node[apricot, inner sep=1.2mm, pos=0.55, anchor=east] {$\ell_b$};
    \path (0, 0) edge[draw=none, bend right=6] node[inner sep=1.2mm, midway, anchor=120] {$d_{b}$} (A);
    \path (S) edge[draw=none, bend right=9] node[plum, inner sep=1.2mm, pos=0.4, anchor=80] {$\lambda_b$} (A);
    \node[anchor=west, inner sep=1mm] at (A) {$p$};
    \end{tikzpicture}
    \end{center}
    \item Now, consider a hemisphere containing two slits, at two corners $a$, $b$ with cone angles $\beta_a$, $\beta_b'$. We'll distinguish two configurations, both shown in the figure below. In the first configuration, neither of the two geodesic arcs joining the un-slit corner to one of the slit endpoints crosses the other slit. In that case, the slit angles $\phi_a$, $\phi_b'$ are related to $d$, $\ell$, and $\lambda$ by the equations
    \begin{align}
    \cos\left(\beta_a/2-|\phi_a|\right)&=\frac{\cosh(d_a)\cosh(\ell_a)-\cosh(\lambda_a)}{\sinh(d_a)\sinh(\ell_a)} \label{eq:second-slit-angle-case-two-slits}\\
    \cos(\phi_b')&=\frac{\cosh(d_b)\cosh(\ell_b)-\cosh(\lambda_b)}{\sinh(d_b)\sinh(\ell_b)}. \label{eq:first-slit-angle-case-two-slits}    
    \end{align} 
    For the second configuration, we'll assume that the geodesic arc joining the un-slit corner to the endpoint of the slit at corner $b$ crosses the other slit. In that case, we prefer the following alternative formula to compute $\phi_b'$. We introduce two extra parameters: the length $\xi$ of the shortest geodesic arc joining the endpoints of the two slits, and the angle $\eta$ between the two geodesic arcs from the endpoint of the slit based at corner $a$ to the un-slit corner and to the endpoint of the other slit.
    \begin{equation}
        \cos(\phi_b')=\frac{\cosh(d_b)\cosh(\ell_b)-\cosh(\xi)\cosh(\lambda_a)+\sinh(\xi)\sinh(\lambda_a)\cos(\eta)}{\sinh(d_b)\sinh(\ell_b)}. \label{eq:first-slit-angle-special-case-two-slits}
    \end{equation}
    \begin{center}
    \begin{tikzpicture}[scale=.9, every node/.style={inner sep=1mm}]
    \node[anchor=south west, inner sep=0mm] at (-2.44, -8.47) {\reflectbox{\includegraphics[width=4.05cm]{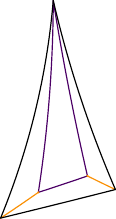}}};
    \coordinate (A) at (-2.4, -7.33);
    \coordinate (B) at (2.04, -8.43);
    \coordinate (sA) at (-1.32, -6.8);
    \coordinate (sB) at (0.59, -7.44);
    
    \draw[apricot] (A) ++(22:0.6) arc (22:-10:0.6) node[inner sep=1.5mm, anchor=100] {$\phi_a$};
    \draw (A) ++(-9:0.3) arc (-9:61:0.3) node[anchor=-10] {$\beta_a/2$};
    \draw (B) ++(116:0.3) arc (116:163:0.3) node[anchor=60] {$\beta_b'/2$};
    \draw[apricot] (B) ++(144:0.6) arc (144:115:0.6) node[anchor=200] {$\phi_b'$};
    
    \path (A) edge[draw=none, bend right=4] node[midway, inner sep=1.2mm, anchor=east] {$d_a$} (0, 0);
    \path (B) edge[draw=none, bend left=7] node[midway, inner sep=1.2mm, anchor=west] {$d_b$} (0, 0);
    \path (sA) edge[draw=none, bend right=2] node[plum, inner sep=0.5mm, pos=0.3, anchor=west] {$\lambda_a$} (0, 0);
    \path (sB) edge[draw=none, bend left=4] node[plum, inner sep=0.5mm, pos=0.3, anchor=east] {$\lambda_b$} (0, 0);
    \path (sA) -- (sB) node[plum, midway, inner sep=0.8mm, anchor=north east] {$\xi$};
    \path[apricot] (A) -- (sA) node[inner sep=0.3mm, pos=0.8, anchor=south east] {$\ell_a$};
    \path[apricot] (B) -- (sB) node[midway, anchor=south] {$\ell_b$};
    
    \begin{scope}[shift={(8, 0)}]
    \node[anchor=south west, inner sep=0mm] at (-2.44, -8.47) {\reflectbox{\includegraphics[width=4.05cm]{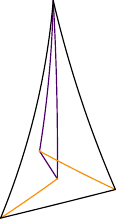}}};
    \coordinate (A) at (-2.4, -7.33);
    \coordinate (B) at (2.04, -8.43);
    \coordinate (sA) at (0.53, -5.84);
    \coordinate (sB) at (-0.15, -6.92);
    
    \draw[apricot] (A) ++(22:0.6) arc (22:-10:0.6) node[inner sep=1.5mm, anchor=100] {$\phi_a$};
    \draw (A) ++(-9:0.3) arc (-9:61:0.3) node[anchor=-10] {$\beta_a/2$};
    \draw (B) ++(116:0.3) arc (116:163:0.3) node[anchor=60] {$\beta_b'/2$};
    \draw[apricot] (B) ++(144:0.6) arc (144:115:0.6) node[anchor=200] {$\phi_b'$};
    \draw[plum] (sA) ++(105:0.3) arc (105:232:0.3) node[midway, anchor=-70] {$\eta$};
    
    \path (A) edge[draw=none, bend right=4] node[midway, inner sep=1.2mm, anchor=east] {$d_a$} (0, 0);
    \path (B) edge[draw=none, bend left=7] node[midway, inner sep=1.2mm, anchor=west] {$d_b$} (0, 0);
    \path (sA) edge[draw=none, bend right=2] node[plum, inner sep=0.4mm, pos=0.1, anchor=west] {$\lambda_a$} (0, 0);
    \path (sA) -- (sB) node[plum, midway, inner sep=0.5mm, anchor=135] {$\xi$};
    \path[apricot] (A) -- (sA) node[inner sep=0.3mm, pos=0.55, anchor=south east] {$\ell_a$};
    \path[apricot] (B) -- (sB) node[midway, anchor=south] {$\ell_b$};
    \end{scope}
    \end{tikzpicture}
    \end{center}
    \end{itemize}
    The hyperbolic law of cosines---formula~\eqref{eq:hyperbolic-law-of-cosines-angles} from Appendix~\ref{apx:trig-formulae}---expresses the hyperbolic cosine of an equator segment in terms of the corner angles of the samosa. So, similarly as for Lemma~\ref{lem:bound-on-l-one-slit}, it's possible to formulate Equations~\eqref{eq:slit-angle-case-one-slit}, \eqref{eq:second-slit-angle-case-two-slits}, \eqref{eq:first-slit-angle-case-two-slits}, \eqref{eq:first-slit-angle-special-case-two-slits} in terms of the corner angles of the samosa rather than the lengths of the equator segments.
\end{lem}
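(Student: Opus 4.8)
The plan is to obtain every identity by applying the ordinary hyperbolic law of cosines---formula~\eqref{eq:hyperbolic-law-of-cosines-lengths} from Appendix~\ref{apx:trig-formulae}---to an explicitly identified geodesic triangle on the samosa, in the same spirit as the proof of Lemma~\ref{lem:bound-on-l-one-slit}. For the single-slit formula~\eqref{eq:slit-angle-case-one-slit}, I would use the triangle with vertices the slit corner $b$, the endpoint $S_b$ of the slit, and the un-slit corner $p$. Its two sides meeting at $b$ are the slit, of length $\ell_b$, and the equator segment of length $d_b$; by Definition~\ref{defn:slit} the slit angle is measured from the negative side of the equator, which here is the segment toward $p$, so the interior angle at $b$ is $\phi_b$ (only $\cos\phi_b$ enters, so the sign is immaterial). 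The third side is the shortest geodesic arc from $S_b$ to $p$, of length $\lambda_b$. The law of cosines gives
\[ \cosh(\lambda_b)=\cosh(d_b)\cosh(\ell_b)-\sinh(d_b)\sinh(\ell_b)\cos(\phi_b), \]
and solving for $\cos(\phi_b)$ yields~\eqref{eq:slit-angle-case-one-slit}.

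For the two-slit case in the first configuration, formula~\eqref{eq:first-slit-angle-case-two-slits} at the corner $b$ follows exactly as above, since by hypothesis the arc $\lambda_b$ does not meet the slit at $a$ and so is genuinely the third side of the corresponding triangle. At the corner $a$, the one point requiring care is the interior angle at $a$ of the triangle with vertices $a$, the endpoint $S_a$ of its slit, and the un-slit corner $p$: the slit angle $|\phi_a|$ is measured from the negative side of the equator at $a$, which---given the cyclic order of the three corners---is the equator segment toward $b$ rather than the segment $d_a$ toward $p$, and within this hemisphere the two equator segments at $a$ subtend the angle $\beta_a/2$; hence the angle between the slit and $d_a$ is $\beta_a/2-|\phi_a|$. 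The law of cosines then gives
\[ \cosh(\lambda_a)=\cosh(d_a)\cosh(\ell_a)-\sinh(d_a)\sinh(\ell_a)\cos(\beta_a/2-|\phi_a|), \]
which rearranges to~\eqref{eq:second-slit-angle-case-two-slits}.

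In the second configuration the arc from $S_b$ to $p$ crosses the slit at $a$, so it is awkward to keep using $\lambda_b$ directly; instead I would compute the hyperbolic cosine of the length of this arc from the auxiliary triangle with vertices $S_a$, $S_b$, and $p$, in which the sides at $S_a$ have lengths $\lambda_a$ and $\xi$ and include the angle $\eta$. The law of cosines there gives $\cosh(\xi)\cosh(\lambda_a)-\sinh(\xi)\sinh(\lambda_a)\cos(\eta)$ for that $\cosh$, and substituting it for $\cosh(\lambda_b)$ in the relation coming from the triangle at $b$,
\[ \cosh(d_b)\cosh(\ell_b)-\sinh(d_b)\sinh(\ell_b)\cos(\phi_b')=\cosh(\xi)\cosh(\lambda_a)-\sinh(\xi)\sinh(\lambda_a)\cos(\eta), \]
and solving for $\cos(\phi_b')$ yields~\eqref{eq:first-slit-angle-special-case-two-slits}. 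The closing remark of the lemma, recasting all four identities in terms of the corner angles of the samosa, then follows by replacing each $\cosh(d)$ using formula~\eqref{eq:hyperbolic-law-of-cosines-angles}, exactly as noted after Lemma~\ref{lem:bound-on-l-one-slit}.

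Each displayed identity is a one-line application of the law of cosines, so the real work---and the main obstacle---is the bookkeeping of configurations. Concretely: (i) justifying, from the figures and the conventions of Definition~\ref{defn:slit}, why the interior angle at a slit corner is $\phi$ in some cases and $\beta/2-|\phi|$ in others, which comes down to the cyclic order of the three corners and the orientation of the equator relative to the hemisphere containing the slits; and (ii) in the second configuration, checking that the sub-arcs at $S_a$ really assemble into a genuine geodesic triangle sharing the arc from $S_b$ to $p$ with the triangle at $b$, so that the two law-of-cosines relations can be matched. Once these configuration questions are settled, the formulas drop out immediately.
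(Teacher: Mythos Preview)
Your proposal is correct and matches the paper's approach exactly: the paper's own proof is the single sentence ``All the statements are a direct application of the hyperbolic law of cosines---formula~\eqref{eq:hyperbolic-law-of-cosines-lengths} from Appendix~\ref{apx:trig-formulae}.'' You have simply made explicit the triangles and the angle bookkeeping (in particular the $\beta_a/2-|\phi_a|$ at corner~$a$ and the auxiliary triangle $S_aS_bp$ in the second configuration) that the paper leaves to the reader and the figures.
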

\begin{proof}
Each statement is a direct application of the hyperbolic law of cosines---formula~\eqref{eq:hyperbolic-law-of-cosines-lengths} from Appendix~\ref{apx:trig-formulae}. 
\end{proof}

\begin{rem}\label{rem:sign-slit-angle}
Equations~\eqref{eq:slit-angle-case-one-slit}, \eqref{eq:second-slit-angle-case-two-slits}, \eqref{eq:first-slit-angle-case-two-slits}, \eqref{eq:first-slit-angle-special-case-two-slits} determine the absolute values of the slit angles. The sign of each slit angle is determined by which hemisphere it lies in. 
\end{rem}
\subsubsection{Injectivity of the realization map}\label{sec:proof-thm-2-injectivity}
With Lemmas \ref{lem:kink-angle} and \ref{lem:slit-angle} in hand, we're ready to tackle Theorem~\ref{thm:realization-map-local-homeo}. We start by proving that the realization map $R\colon \Assemb_{\alpha, \Upsilon}^\varepsilon \to \HypCone{\alpha}(\AssembSphereExt{\Upsilon})$ is injective. In other words, we show that parameters of a samosa assembly are uniquely determined by the associated isotopy class $[\HypConeStruct]\in R(\Assemb_{\alpha, \Upsilon}^\varepsilon)$ of hyperbolic cone structures on $\AssembSphereExt{\Upsilon}$. Our strategy is to express each of the samosa assembly parameters as a function of \emph{intrinsic parameters} of $[\HypConeStruct]$, by which we mean quantities that can be measured using the metric on the realized cone sphere without knowing how the realization was assembled from samosas.

We first show that $\ell$ and $\beta$ are determined by $[\HypConeStruct]$. Each pants curve $b\in {\mathcal B}$ is represented by a unique kinked geodesic from the associated whole singularity to itself, as described in Lemma~\ref{lem:kink-angle}. Its length $c_b$ and kink angle $\kappa_b$ are both intrinsic parameters of $[\HypConeStruct]$. Using~\eqref{eq:length-ell}, we can express the value of $\ell_b$ in terms of $c_b$, $\kappa_b$:
\begin{equation}\label{eq:slit-length-as-function-of-h}
    \ell_b = \arccosh{\sqrt{\frac{\cos(\kappa_b/2)^2}{\cos(\kappa_b/2)^2+\tanh (c_b/2)^2}}}. 
\end{equation}
The value of $\cos(\beta_b)$ is determined by~\eqref{eq:angle-beta}, and the value of $\beta_b$ is then determined by the sign of~$\kappa_b$: 
\begin{equation}\label{eq:beta-as-function-of-h}
    \beta_b = \left\{\begin{array}{ll}
    \arccos\big(\sin(\kappa_b/2)^2\cosh(c_b)-\cos(\kappa_b/2)^2\big),&\text{ if } \kappa_b\geq 0,\\
    2\pi-\arccos\big(\sin(\kappa_b/2)^2\cosh(c_b)-\cos(\kappa_b/2)^2\big),&\text{ if } \kappa_b\leq 0.
\end{array}\right.
\end{equation}
We now see that both $\beta_b$ and $\ell_b$ are uniquely determined by $[\HypConeStruct]$ for every pants curve $b\in {\mathcal B}$.
    
Next, we prove that $[\HypConeStruct]$ also determines the slit angles. We're assuming that the underlying pants assembly $\Upsilon$ is chained, so there are never more than two slits on a given samosa. That means we can find expressions for all the slit angles using Lemma~\ref{lem:slit-angle}:
\begin{align}
    \phi_b'&=\pm\arccos \left(\frac{\cosh(d_b)\cosh(\ell_b)-\cosh(\lambda_b)}{\sinh(d_b)\sinh(\ell_b)}\right) \label{eq:slit-angle-1-as-function-of-h} \\
    \phi_a&=\pm\left(\beta_a/2-\arccos \left(\frac{\cosh(d_a)\cosh(\ell_a)-\cosh(\lambda_a)}{\sinh(d_a)\sinh(\ell_a)}\right)\right).\label{eq:slit-angle-2-as-function-of-h}
\end{align}
The sign ambiguity in these formulas comes from the one thing Lemma~\ref{lem:slit-angle} can't tell us: which hemisphere a given slit is in. We get that information from $\varepsilon$, as explained in Remark~\ref{rem:sign-slit-angle}.

Now we just need to confirm that $[\HypConeStruct]$ determines all of the lengths and angles that appear in our expressions for the slit angles. Recall from Lemma~\ref{lem:slit-angle} that $d_a$ and $d_b$ can be expressed in terms of the corner angles $\beta$ and $\alpha$ of the samosas. On a hemisphere with a single slit, or a hemisphere with two slits in which neither slit blocks the arc from the end of the other slit to the un-slit corner, $\lambda_a$ and $\lambda_b$ can both be measured as the lengths of geodesic arcs between cone points. This makes them intrinsic parameters of $[\HypConeStruct]$. On the other hand, if each slit does block the arc from the end other slit, only one of $\lambda_a$ and $\lambda_b$ can be measured directly in this way. In this case, fortunately, the other quantity can be expressed in terms of $\xi$ and $\eta$, as described in Lemma~\ref{lem:slit-angle}. Both $\xi$ and $\eta$ are intrinsic parameters of $[\HypConeStruct]$, so we conclude in either case that $[\HypConeStruct]$ determines the values of $\lambda_a$ and $\lambda_b$. Finally, we've already shown that $\ell$ and $\beta$ are determined by $[\HypConeStruct]$. This concludes the proof that the values of the slit angles are determined by $[\HypConeStruct]$, which in turn completes the proof that the realization map is injective.

\subsubsection{Continuity of the realization map}\label{sec:proof-thm-2-continuity}
Next, we prove that the realization map is continuous. In other words, we prove that when we realize a samosa assembly $\sigma \in \Assemb_{\alpha, \Upsilon}^\varepsilon$, its image $[\HypConeStruct] = R(\sigma)$ depends continuously on the samosa assembly parameters $(\beta_b,\ell_b,\phi_b,\phi'_b)_{b\in {\mathcal B}}$. We use the cone structure $\HypConeStruct$ to represent $R(\sigma)$, inducing a smooth structure and a hyperbolic cone metric $h_\HypConeStruct$ on $\AssembSphereExt{\Upsilon}$. Identifying the realization of $\sigma$ with the cone sphere $(\AssembSphereExt{\Upsilon}, h_\HypConeStruct)$ allows us to talk about geodesics on $\AssembSphereExt{\Upsilon}$. Working locally near some initial choice of $\sigma$, we'll introduce a triangulation $\mathcal{T}$ of $\AssembSphereExt{\Upsilon}$ for which $[\HypConeStruct]$ lies in the domain of the parameterization $\varphi_\mathcal{T}\colon U_\mathcal{T}\to \R_{>0}^{6n-15}$ from Section~\ref{sec:parametrization-using-triangulations}. We'll then prove that the composition $\varphi_{\mathcal{T}}\circ R$ is continuous at $\sigma$. This implies that $R$ is continuous at $\sigma$, because by Proposition~\ref{length-homeo} the map $\varphi_{\mathcal T}$ is a homeomorphism onto its image.

We'll construct $\mathcal{T}$ in two steps. First, we'll decompose $\AssembSphereExt{\Upsilon}$ into elementary pieces whose boundaries are made of valid triangulation edges. Then, we'll triangulate each piece individually. The decomposition goes like this. For each pants curve $b\in {\mathcal B}$, take the simple closed geodesic that starts and ends at the whole singularity and lies in the homotopy class of $b$, as described in Lemma~\ref{lem:kink-angle}. Its length is $c_b$. Cutting $\AssembSphereExt{\Upsilon}$ along each of these geodesics splits it into two \emph{joker's hats} and $n-4$ \emph{V-pieces}, as defined for instance in~\cite{DryPar}. Equation \eqref{eq:length-kinked-geodesic} shows that $c_b$ depends continuously on the samosa assembly parameters $\ell_b$ and $\beta_b$ since we can write
\begin{equation}\label{eq:length-kinked-geodesic-isolated}
c_b=\arccosh\big( \cosh(\ell_b)^2-\sinh(\ell_b)^2\cos(\beta_b)\big).
\end{equation}

\subsubsection{Triangulating joker's hats}\label{sec:proof-thm-2-triangulating-joker's-hats}
We triangulate each joker's hat using three hyperbolic triangles. Here's a picture of a joker's hat, which is already triangulated as a preview.
\begin{center}
\vspace{2mm}
\begin{tikzpicture}[scale=1.1, every node/.style={inner sep=0.2mm}]
\node[anchor=south west, inner sep=0mm] {\includegraphics[width=8.8cm]{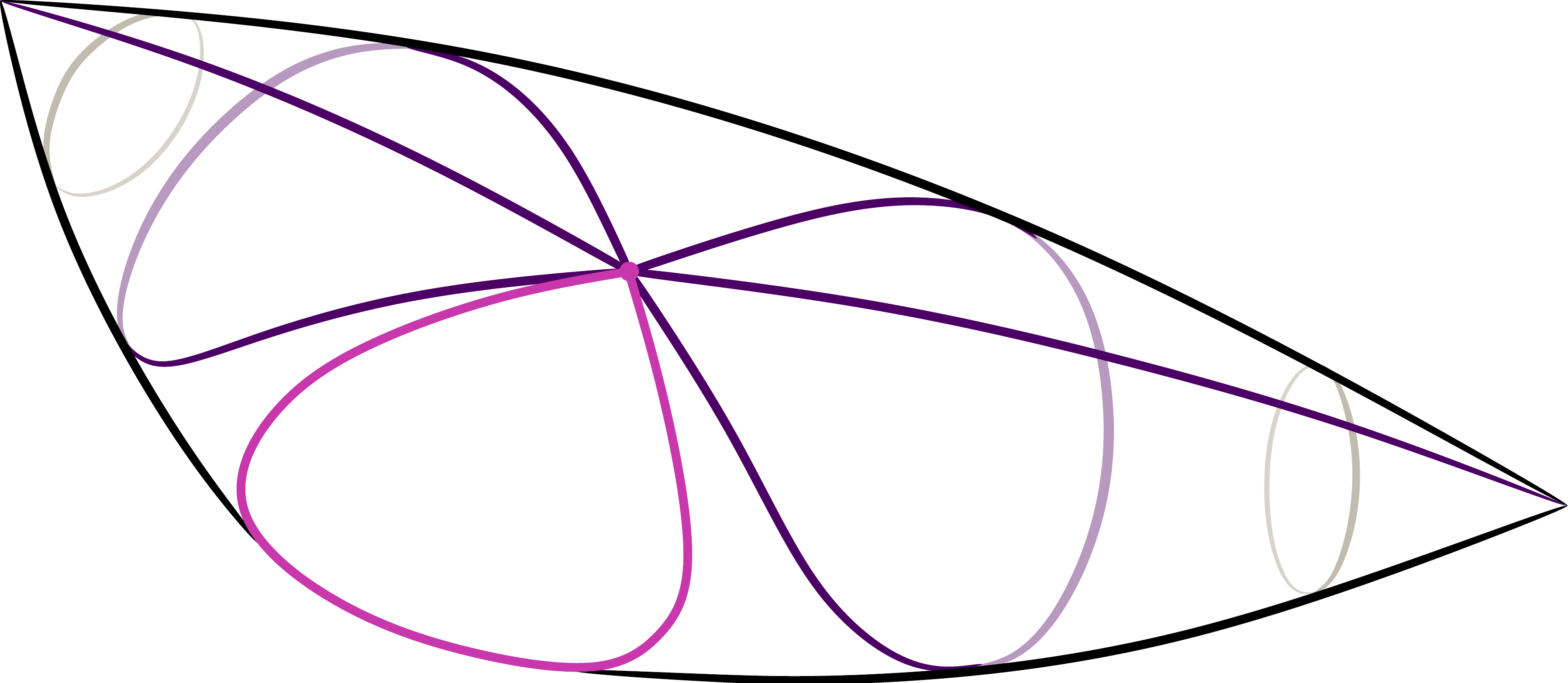}};

\node[black, anchor=east] at (-0.05, 3.55) {$q$};
\node[black, anchor=west] at (8.05, 0.85) {$p$};

\node[plum, anchor=north] at (4.8, 1.85) {$\lambda_p$};
\node[plum, anchor=north] at (1.9, 2.75) {$\lambda_q$};
\node[plum, anchor=north] at (4.4, 0.8) {$\delta_p$};
\node[plum, anchor=north] at (0.9, 2.15) {$\delta_q$};
\node[mauve, anchor=north east] at (2, 0.3) {$c_b$};

\node[darksmoked, anchor=north east] at (0.2, 2.7) {$2\pi - \alpha_q$};
\node[darksmoked, anchor=north west] at (6.8, 0.4) {$2\pi - \alpha_p$};
\end{tikzpicture}
\vspace{2mm}
\end{center}
We label the un-slit corners $p$ and $q$. The boundary is a geodesic homotpic to the pants curve $b\in {\mathcal B}$. The first two edges of the triangulation are the geodesic loops that enclose the corners $p$ and $q$, starting and ending at the whole singularity. Let's call their lengths $\delta_{p}$ and $\delta_{q}$, respectively. They split the joker's hat into two cones, with cone angles $2\pi - \alpha_p$ and $2\pi - \alpha_q$, and one triangle, with side lengths, $(\delta_p, \delta_q, c_b)$. The other two edges are the geodesic segments that run from the whole singularity to $p$ and $q$. Let's call their lengths $\lambda_p$ and $\lambda_q$, respectively. The edge to $p$ cuts the cone around $p$ into an isosceles triangle with side lengths $(\lambda_p, \lambda_p, \delta_p)$, where the matching sides make an angle of $2\pi - \alpha_p$. We get an analogous triangle at corner $q$.

The geodesics from the whole singularity to the corners $p$ and $q$ lie within a single samosa, even though the joker's hat might spill over the boundary of that samosa a bit. Their lengths, $\lambda_p$ and $\lambda_q$, are determined by the parameters of the samosa they lie in. Specifically, their lengths are determined by the trigonometry of the hemisphere they lie in, shown below.
\begin{center}
\vspace{1mm}
\begin{tikzpicture}[scale=.9, every node/.style={inner sep=1.2mm}]
\node[anchor=south west, inner sep=0mm] at (-3.21, -0.03) {\reflectbox{\includegraphics[width=4.05cm, angle=-90]{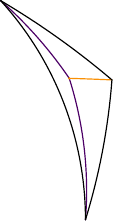}}};
\coordinate (P) at (5.59, 1.06);
\coordinate (Q) at (-3.18, 4.46);
\coordinate (S) at (-0.06, 1.7);
\draw[apricot] ++(12:0.45) arc (12:88:0.45) node[apricot, midway, anchor=south west] {$\phi_b$};
\draw ++(14:0.3) arc (14:121:0.3) node[anchor=15, inner sep=1.5mm] {$\beta_b/2$};
\path (0, 0) -- (S) node[apricot, pos=0.55, anchor=east] {$\ell_b$};
\path (0, 0) edge[draw=none, bend right=6] node[midway, anchor=120] {$d_q$} (P);
\path (0, 0) edge[draw=none, bend left=4.5] node[pos=0.6, anchor=60] {$d_p$} (Q);
\path (S) edge[draw=none, bend right=9] node[plum, pos=0.4, anchor=80] {$\lambda_p$} (P);
\path (S) edge[draw=none, bend left=6.5] node[plum, pos=0.25, anchor=70, inner sep=0.5mm] {$\lambda_q$} (Q);
\node[anchor=west] at (P) {$p$};
\node[anchor=east] at (Q) {$q$};
\end{tikzpicture}
\end{center}

With a calculation similar to the one we did in Lemma~\ref{lem:slit-angle}, we can now write manifestly continuous formulas for $\lambda_p$ and $\lambda_q$ in terms of the samosa assembly parameters:
\begin{align}
\lambda_p&=\arccosh\big( \cosh(d_{q})\cosh(\ell_b)-\cos(\phi_b)\sinh(d_{q})\sinh(\ell_b)\big ) \label{eq:length-lambda-geodesic-1-isolated} \\
\lambda_q&=\arccosh\big( \cosh(d_{p})\cosh(\ell_b)-\cos(\beta_b/2-|\phi_b|)\sinh(d_{p})\sinh(\ell_b)\big ). \label{eq:length-lambda-geodesic-2-isolated}
\end{align}
Similarly, the hyperbolic law of cosines---formula~\eqref{eq:hyperbolic-law-of-cosines-lengths} in Appendix~\ref{apx:trig-formulae}---applied to the isosceles triangle with side lengths $(\lambda_p, \lambda_p, \delta_p)$ gives
\begin{equation}
\delta_p=\arccosh\big( \cosh(\lambda_p)^2-\cos(\alpha_p)\sinh(\lambda_p)^2\big).\label{eq:length-delta-geodesic-isolated}
\end{equation}
The analogous formula for $\delta_q$ holds too. We conclude that the length of each arc used to triangulate each joker's hat depends continuously on the samosa assembly parameters.

\subsubsection{Triangulating V-pieces}\label{sec:proof-thm-2-triangulating-V-pieces} We triangulate each V-piece using four hyperbolic triangles. A V-piece sits mostly or entirely within a samosa, and the triangulation we use depends on the shape of that samosa. Let's say $a, b \in \mathcal{B}$ are the pants curves homotopic to the boundary components of the V-piece. We look at four geodesic arcs connecting the whole singularities to the un-slit corner:
\begin{itemize}
\item[] $\zeta_a$, $\zeta_b$. The arcs that stay within the same hemisphere.
\item[] $\zeta'_a$, $\zeta'_b$. The arcs that cross into the other hemisphere and stay there.
\end{itemize}
Each arc is labeled with the pants curve associated with the whole singularity it starts at. The arrangement of these four arcs determines which triangulation we use. There are two cases to consider:
\begin{itemize}
\item[] \textbf{Case~$\text{V}_1$.} The slits don't block any of the arcs described above.
\item[] \textbf{Case~$\text{V}_2$.} Each slit blocks one of the arcs described above.
\end{itemize}
When the slits are in the same hemisphere, these cases correspond to the two samosa configurations illustrated below Equation~\eqref{eq:first-slit-angle-special-case-two-slits} in the statement of Lemma~\ref{lem:slit-angle}. We'll describe the triangulation carefully in Case~$\text{V}_1$, and we'll sketch it in Case~$\text{V}_2$.

\subsubsection{Triangulating a V-piece in Case~$\text{V}_1$}\label{sec:proof-thm-2-triangulating-V-pieces-case-V1} Let's start with Case~$\text{V}_1$. The first two edges of the triangulation are the boundary components; call their lengths $c_a$, $c_b$. The next four edges are $\zeta_a$, $\zeta_b$, $\zeta'_a$, $\zeta'_b$; call their lengths $\lambda_{a}$,$\lambda_b$, $\lambda'_a$, $\lambda'_b$. The last edge is the shortest geodesic arc connecting the two whole singularities; call its length $\xi$.
\begin{center}
\begin{tikzpicture}[scale=1.2, every node/.style={inner sep=0.2mm}]
\node[anchor=south west, inner sep=0mm] {\includegraphics[width=7.2cm]{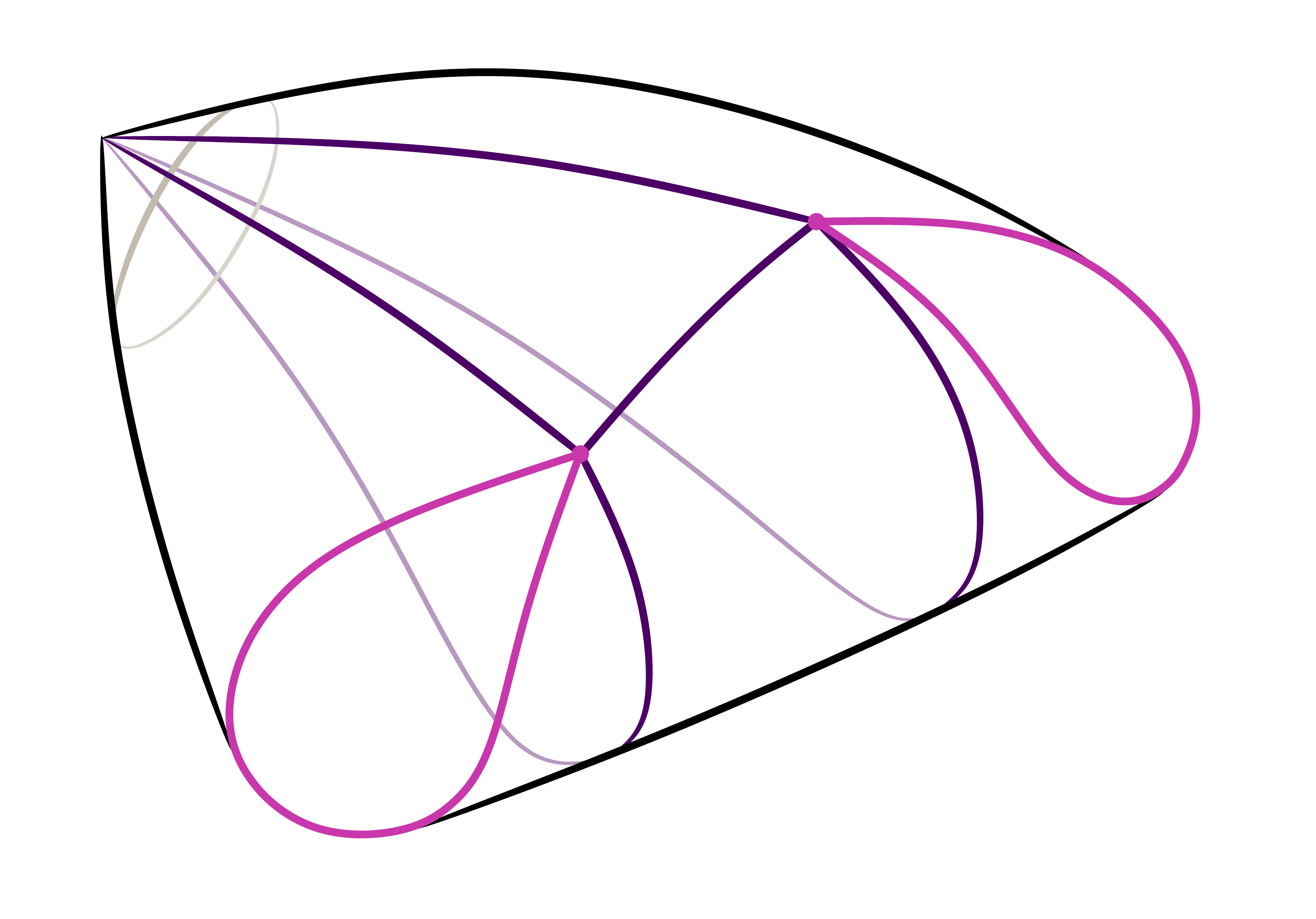}};

\node[plum, anchor=north east] at (1.95, 2.75) {$\lambda_a$};
\node[plum, anchor=north east] at (2.9, 3.35) {$\lambda_b$};
\node[plum, anchor=north west] at (3.2, 2.75) {$\xi$};
\node[plum, anchor=south west] at (3.0, 1.2) {$\lambda'_a$};
\node[plum, anchor=north east] at (4.35, 2.4) {$\lambda'_b$};
\node[mauve, anchor=north east] at (1.5, 0.4) {$c_a$};
\node[mauve, anchor=north east] at (5.7, 2.8) {$c_b$};

\begin{scope}[shift={(5.6, -0.27)}]
\node[anchor=south west, inner sep=0mm] {\includegraphics[width=7.41cm]{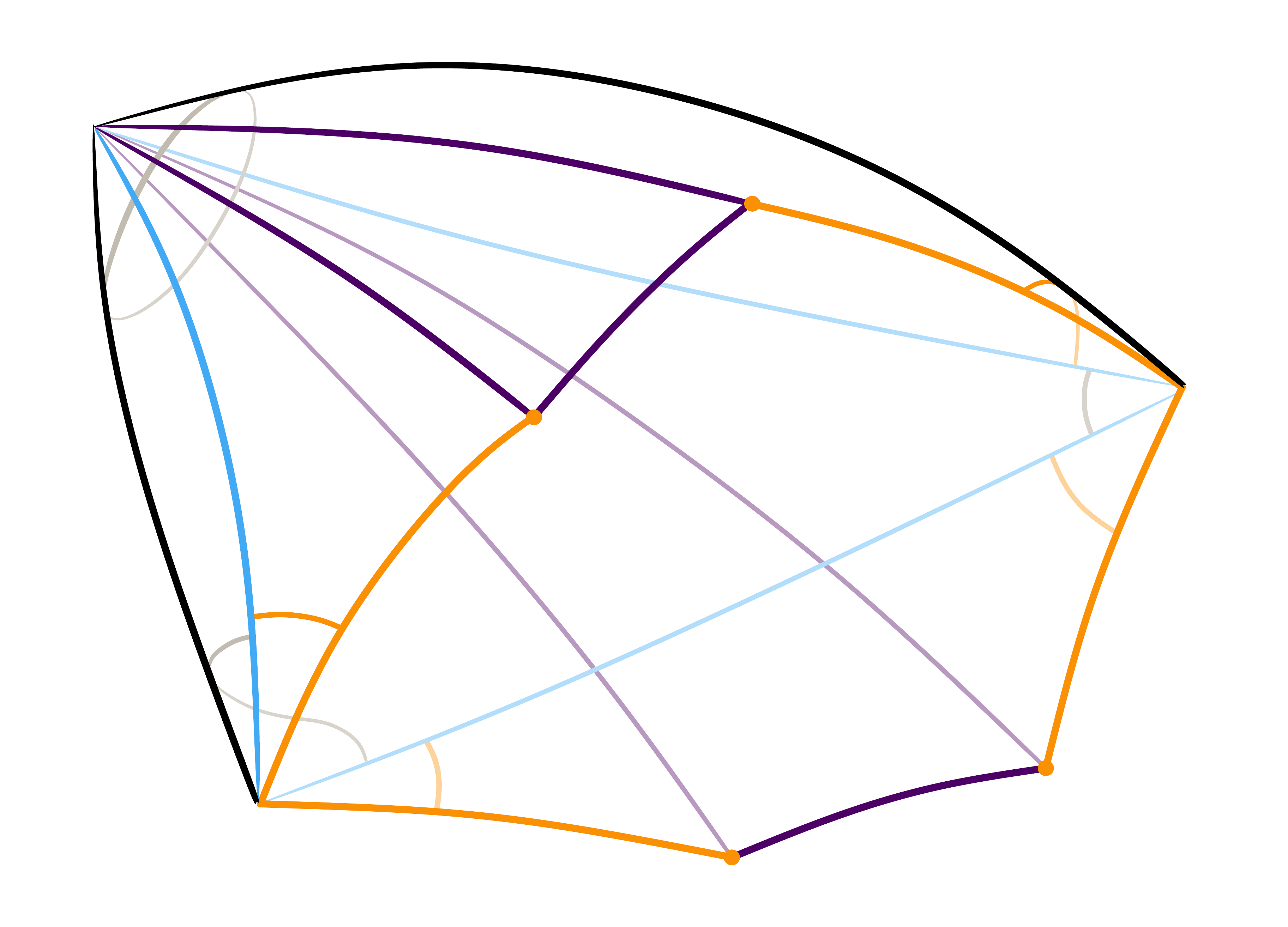}};

\begin{scope}[shift={(-0.1, 0.3)}]
\node[plum, anchor=north east] at (1.95, 2.75) {$\lambda_a$};
\node[plum, anchor=north east] at (2.9, 3.35) {$\lambda_b$};
\node[plum, anchor=north west] at (3.2, 2.75) {$\xi$};
\end{scope}
\node[lightplum, anchor=south west] at (3.2, 0.75) {$\lambda'_a$};
\node[lightplum, anchor=north east] at (4.35, 1.45) {$\lambda'_b$};
\node[plum, anchor=north west] at (4.2, 0.55) {$\xi$};
\node[apricot, anchor=south east] at (2, 1.95) {$\ell_a$};
\node[apricot, anchor=north east] at (4.19, 3.7) {$\ell_b$};
\node[apricot, anchor=south east] at (2.85, 0.1) {$\ell_a$};
\node[apricot, anchor=north east] at (5.62, 1.5) {$\ell_b$};
\node[sky, anchor=north east] at (1.05, 2.7) {$d_a$};
\node[lightsky, anchor=north east] at (4, 2.9) {$d_b$};
\node[lightsky, anchor=north east] at (3.45, 1.85) {$d_{ab}$};

\node[lightapricot, anchor=west] at (2.15, 0.77) {$\phi_a$};
\node[apricot, anchor=west] at (1.6, 1.31) {\tiny $\beta_a/2-\phi_a$};
\node[darksmoked, anchor=east] at (0.87, 1.23) {\small $\beta_a/2$};
\node[apricot, anchor=west] at (5.2, 3.2) {$\phi_b'$};
\node[lightapricot, anchor=east] at (6.6, 1.9) {\small $\beta_b'/2-\phi_b'$};
\node[darksmoked, anchor=east] at (5.15, 2.5) {\small $\beta_b'/2$};
\end{scope}
\end{tikzpicture}
\end{center}
We already know from~\eqref{eq:length-kinked-geodesic-isolated} that $c_{a}$ and $c_{b}$ depend continuously on the samosa assembly parameters. A trigonometric computation, analogous to the one in Lemma~\ref{lem:slit-angle}, shows that
\begin{align*}
\lambda_a&=\arccosh\big (\cosh(d_a)\cosh(\ell_{a})-\cos(\beta_a/2-|\phi_{a}|)\sinh(d_a)\sinh(\ell_{a})\big) \\
    \lambda_b&=\arccosh\big (\cosh(d_b)\cosh(\ell_{b})-\cos(\phi_{b}')\sinh(d_b)\sinh(\ell_{b})\big).
\end{align*}
We can find $\lambda_a'$ and $\lambda_b'$, using similar trigonometry, after cutting the samosa along the slits and the arc joining the whole singularities. This lets us flatten out the triangle whose sides are $\zeta'_a$, $\zeta'_b$, and the arc joining the whole singularities. Studying the resulting picture, we can figure out that
\begin{align*}
\lambda_a'&=\arccosh\big (\cosh(d_a)\cosh(\ell_{a})-\cos(\beta_{a}/2+|\phi_{a}|)\sinh(d_a)\sinh(\ell_{a})\big) \\
    \lambda_b'&=\arccosh\big (\cosh(d_b)\cosh(\ell_{b})-\cos(\beta_{b}'-|\phi_{b}'|)\sinh(d_b)\sinh(\ell_{b})\big).
\end{align*}
The value of $\xi$ is one the four side lengths of a hyperbolic quadrilateral. We can write a manifestly continuous formula for it in terms of the lengths $\ell_{a}, \ell_{b}$, $d_{ab}$ of the three other sides, and the two interior angles $\phi_{a}$ and $\beta_b'/2-\phi_{b}'$. Doing these computations, which we omit, proves that all of the edge lengths in the triangulation of each V-pieces depend continuously on the samosa assembly parameters.

\subsubsection{Triangulating a V-piece in Case~$\text{V}_2$}\label{sec:proof-thm-2-triangulating-V-pieces-case-V2}
If, instead, we're in Case~$\text{V}_2$, then either the arcs $\zeta_a$, $\zeta'_b$ or the arcs $\zeta'_a$, $\zeta_b$ are blocked by the slits, and therefore not realized as unbroken geodesic arcs on $\AssembSphereExt{\Upsilon}$. We replace the two broken arcs with the geodesic arcs that connect the two whole singularities by enclosing the boundary components.
\begin{center}
\begin{tikzpicture}[scale=1.33, every node/.style={inner sep=0.2mm}]

\node[anchor=south west, inner sep=0mm] {\includegraphics[width=8cm]{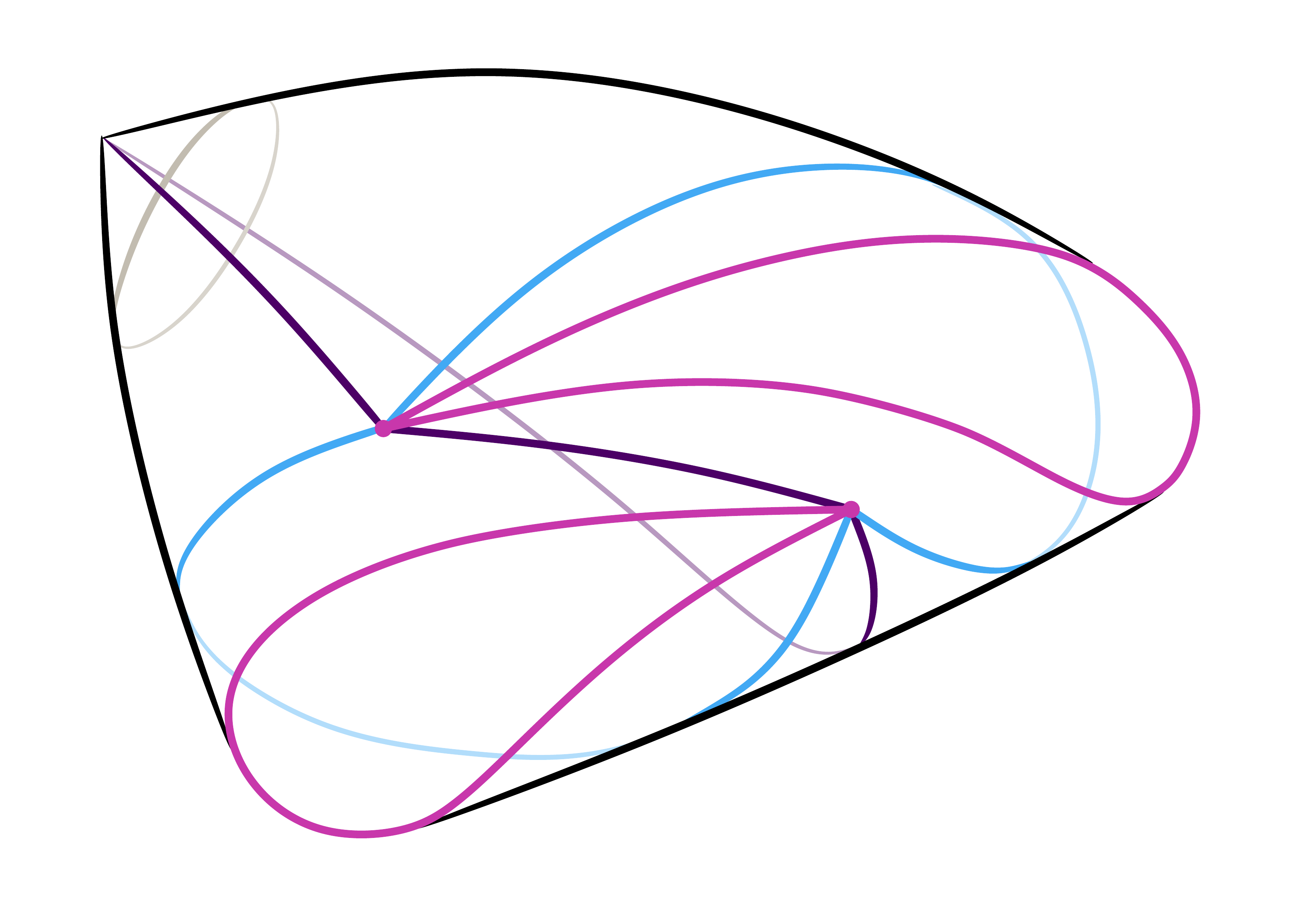}};

\node[mauve, anchor=north east] at (1.5, 0.4) {$c_a$};
\node[mauve, anchor=north east] at (5.9, 2.5) {$c_b$};
\node[plum, anchor=north east] at (1.4, 2.75) {$\lambda_b$};
\node[plum, anchor=north west] at (4.0, 1.3) {$\lambda_a'$};
\node[plum, anchor=north west] at (2.0, 2.17) {$\xi$};
\end{tikzpicture}
\end{center}
You can find the lengths of these new triangulation edges through trigonometric calculations, similar to the ones we did in Case~$\text{V}_1$. After some algebra, you'll again find manifestly continuous formulas for the lengths in terms of the samosa assembly parameters. This is enough to show that $R$ is continuous at the samosa assembly $\sigma$ that we started with.

\subsubsection{Continuity of the inverse map}\label{sec:proof-thm-2-continuity-inverse-map}
We now prove that the inverse map of the realization map, defined on $R(\Assemb_{\alpha, \Upsilon}^\varepsilon)\subset\HypCone{\alpha}(\AssembSphereExt{\Upsilon})$, is also continuous. To do so, we show that the samosa assembly parameters $(\beta_b,\ell_b,\phi_b,\phi'_b)_{b\in {\mathcal B}}$ of $R^{-1}([\HypConeStruct])$ depend continuously on $[\HypConeStruct]\in R(\Assemb_{\alpha, \Upsilon}^\varepsilon)$. We can relate $\ell_b$ to $c_b$ and $\kappa_b$ by~\eqref{eq:slit-length-as-function-of-h}. Proposition~\ref{length-homeo} says that $c_b$ depends continuously on $[\HypConeStruct]$. We can use hyperbolic trigonometry to express the kink angle $\kappa_b$ in terms of triangulation edge lengths, so Proposition~\ref{length-homeo} implies that it too depends continuously on $[\HypConeStruct]$. This shows that $\ell_b$ depends continuously on $[\HypConeStruct]$ for every $b\in {\mathcal B}$. The value of $\beta_b$ is expressed continuously in terms of $c_b$ and $\kappa_b$ by~\eqref{eq:beta-as-function-of-h}, showing that $\beta_b$ varies continuously in $[\HypConeStruct]$.

Finally, to prove that the slit angles $\phi_b$ and $\phi_b'$ depend continuously on $[\HypConeStruct]$, we use~\eqref{eq:slit-angle-1-as-function-of-h} and~\eqref{eq:slit-angle-2-as-function-of-h}. These equations express the slit angles as continuous functions of $\alpha$, $\beta$, $\ell$, $\lambda$, and sometimes $\xi$ and $\eta$. We already established the continuous dependence of $\ell_b$ and $\beta_b$ on $[\HypConeStruct]$. From Proposition~\ref{length-homeo}, we deduce that both $\lambda$ and $\xi$ depend continuously on $[\HypConeStruct]$. Since $\eta$ is an angle of the triangulation, the hyperbolic law of cosines---formula~\eqref{eq:hyperbolic-law-of-cosines-lengths} from Appendix~\ref{apx:trig-formulae}---shows that it too depends continuously on $[\HypConeStruct]$. This finishes the proof that $R^{-1}$ is continuous.

\subsubsection{Smoothness of the realization map and its inverse}\label{sec:proof-thm-2-smoothness}
It remains to be shown that if Conjecture~\ref{smooth-atlas} holds, the realization map is a diffeomorphism onto its image. The argument is similar to the proof that the realization map is continuous.

Using Conjecture~\ref{smooth-atlas}, we can prove that $R$ is smooth by showing that the edge lengths of various triangulations depend smoothly on the samosa parameters. We use~\eqref{eq:length-kinked-geodesic-isolated}, \eqref{eq:length-lambda-geodesic-1-isolated}, \eqref{eq:length-lambda-geodesic-2-isolated}, \eqref{eq:length-delta-geodesic-isolated} for that. The non-smoothness of \eqref{eq:length-lambda-geodesic-2-isolated} at $\phi_b = 0$ does no harm because we're working on the domain $\Assemb_{\alpha, \Upsilon}^\varepsilon$, which prescribes a hemisphere for each slit, and hence a sign for each slit angle.

To prove that the inverse map is smooth, it's enough to express the samosa parameters as smooth functions of the same triangulation edge lengths. Here, we use~\eqref{eq:slit-length-as-function-of-h}, \eqref{eq:beta-as-function-of-h}, \eqref{eq:slit-angle-1-as-function-of-h}, \eqref{eq:slit-angle-2-as-function-of-h}. For instance, we see from~\eqref{eq:beta-as-function-of-h} that $\beta_b$ depends smoothly on the parameters $\kappa_b$ and $c_b$, except maybe when $\kappa_b=0$ or when
\[ \sin(\kappa_b/2)^2\cosh(c_b)-\cos(\kappa_b/2)^2=-1. \]
Using trigonometric identities, we can see that $\sin(\kappa_b/2)^2\cosh(c_b)-\cos(\kappa_b/2)^2\geq -1$, with equality if and only if $\kappa_b=0$. To investigate the situation at $\kappa_b = 0$, we can write
\[ \sin(\beta_b)=\sin(\kappa_b/2)\frac{\sinh(c_b)}{\sinh(\ell_b)} \]
using the hyperbolic law of sines---formula~\eqref{eq:hyperbolic-law-of-sines} from Appendix~\ref{apx:trig-formulae}. This alternate expression for $\beta_b$ shows that it depends smoothly on $\kappa_b$, $c_b$ and $\ell_b$ around $\kappa_b=0$ too, because $c_b$ and $\ell_b$ stay bounded away from zero in this region. The treatment of the other variables is similar.

\section{Unfolding hyperbolic cone spheres}\label{sec:unfolding}
\subsection{Overview}
Sometimes, a samosa assembly can be unfolded onto the hyperbolic plane, giving a convenient net for the hyperbolic cone sphere that the samosa assembly represents. This net, as we'll see in the next section, is related to Maret's triangle chain parameterization of DT representations. It may also be convenient for numerical work, because it gives a fundamental domain for the hyperbolic cone sphere and enables explicit computations of the holonomy map through the triangle chain construction.
\subsection{Unfolding samosas}\label{sec:hamantash}
Some samosas can be cut and unfolded onto the hyperbolic plane in a particularly neat and consistent way.
\begin{defn}\label{defn:hamantash}
A {\em hamantash} is a slit samosa that can be unfolded in a certain way. It's defined by the following properties.
\begin{itemize}
\item All of its slits are in the same hemisphere. Recall that the equator is in both hemispheres.
\item Each slit can be extended geodesically to the equator without hitting another slit.
\end{itemize}
\end{defn}
Intuitively, a hamantash is a slit samosa whose slits lie in the same hemisphere and are short enough to avoid each other's continuations. We can make this intuition precise by recasting the continuation condition as an upper bound on each slit length. The bounds depend on of the number of slits. In this paper, we'll only consider hamantashen with one or two slits. For one slit, the bound is given by Lemma \ref{lem:bound-on-l-one-slit}.

\begin{lem}\label{lem:bound-on-l-two-slits}
A samosa with two slits in the same hemisphere is a hamantash as long as its slit angles satisfy $\phi_a\neq 0$ and $\phi_b'\neq \pm\beta_b'/2$, and its slit lengths $\ell_a$ and $\ell_b$ don't exceed the respective bounds $\ell_{a,\textup{max}}$ and $\ell_{b,\textup{max}}$ given by the formulas
\begin{align*}
\coth (\ell_{a,\textup{max}})&=\frac{1}{\sinh(d)}\big(\cos(\phi_a)\cosh(d)+|\sin(\phi_a)|\cot(\beta_b'/2-|\phi_b'|)\big) \\
\coth (\ell_{b,\textup{max}})&=\frac{1}{\sinh(d)}\big(\cos(\beta_b'/2-|\phi_b'|)\cosh(d)+\sin(\beta_b'/2-|\phi_b'|)\cot(|\phi_a|)\big),
\end{align*}
where $d$ is the length of the equator segment joining the two slit corners. If $\phi_a=0$ or $\phi_b'= \pm\beta_b'/2$, then one of the slit lies on an equator segment and its continuation meets the other slit at its starting point.
\end{lem}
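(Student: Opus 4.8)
\emph{Proof plan.} The plan is to translate the hamantash condition into a statement about the hyperbolic triangle forming the hemisphere that contains the two slits, and then read off the bounds from the four-parts formula exactly as in the proof of Lemma~\ref{lem:bound-on-l-one-slit}. Realize this hemisphere as a hyperbolic triangle with vertices the two slit corners $a$, $b$ and the un-slit corner, interior angles $\beta_a/2$ and $\beta_b'/2$ at $a$ and $b$, and equator segment $[a,b]$ of length $d$. Each slit is a geodesic ray issuing from its corner into the (closed) hemisphere. The first step is to identify the angles these rays make with $[a,b]$: with the sign conventions of Section~\ref{sec:building-blocks} for measuring slit angles from the negative side of the equator, the ray at $a$ makes angle $|\phi_a|$ with $[a,b]$, and the ray at $b$ makes angle $\beta_b'/2 - |\phi_b'|$ with $[b,a]$ (note $\beta_b'/2 - |\phi_b'|\ge 0$ since $\phi_b'\in[-\beta_b'/2,\beta_b'/2]$). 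Pinning these down is a matter of chasing the orientation of the equator and the cyclic order of the corners.

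Next, let $Y$ be the point where the geodesic line carrying the slit at $a$ meets the geodesic line carrying the slit at $b$. The key geometric observation is that $Y$ lies in the closed hemisphere triangle, and in fact on both slit rays: extending the slit at $a$ to a full chord, it runs from the vertex $a$ to a point of the opposite side; that chord splits the triangle into one piece containing $b$ and one piece containing the whole of the side opposite $b$, hence containing the far endpoint of the chord extending the slit at $b$; so the second chord must cross the first, and the crossing point can only be $Y$. Granting this, I claim the hamantash condition is equivalent to $\ell_a \le d(a,Y)$ and $\ell_b \le d(b,Y)$: if both hold, each slit is contained in the sub-chord $[a,Y]$, resp.\ $[b,Y]$, and since the two carrier lines meet only at $Y$, the continuation of one slit to the equator (which passes through $Y$) can meet the other slit at most at the single point $Y$, and then only when the latter slit reaches all the way to its end of that chord; conversely, if for instance $\ell_b > d(b,Y)$, the continuation of the slit at $a$ crosses the interior of the slit at $b$. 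Since $Y$ is interior to the hemisphere, it is closer to $a$ (resp.\ $b$) than the point where that slit's continuation meets the equator, so these bounds automatically imply the one-slit bounds of Lemma~\ref{lem:bound-on-l-one-slit}, and the slits genuinely stay in one hemisphere.

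It remains to compute $d(a,Y)$ and $d(b,Y)$, which are the sides opposite $b$ and opposite $a$ in the triangle $abY$ whose third side $[a,b]$ has length $d$ and whose angles at $a$ and $b$ are $|\phi_a|$ and $\beta_b'/2-|\phi_b'|$. Feeding this triangle into the four-parts formula~\eqref{eq:four-part-formula} from Appendix~\ref{apx:trig-formulae}, just as in Lemma~\ref{lem:bound-on-l-one-slit}, yields exactly the stated expressions for $\coth(\ell_{a,max})$ and $\coth(\ell_{b,max})$. The degenerate cases are then handled by inspection: if $\phi_a = 0$ the slit at $a$ lies along the equator segment $[a,b]$ and its continuation runs straight into the corner $b$, the basepoint of the other slit; if $\phi_b' = \pm\beta_b'/2$ the slit at $b$ lies along $[b,a]$ and its continuation runs into $a$ — in either case the hamantash condition fails, and consistently one of the two formulas collapses ($\beta_b'/2-|\phi_b'|=0$, resp.\ $|\phi_a|=0$, sends the corresponding bound to $0$). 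The main obstacle is the second step: carefully justifying, under the sign conventions, that the angles of the auxiliary triangle $abY$ are $|\phi_a|$ and $\beta_b'/2-|\phi_b'|$, and verifying that $Y$ really does sit inside the hemisphere so that the bounds it defines are the operative ones; the trigonometry itself is then a routine application of the four-parts formula.
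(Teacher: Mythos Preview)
Your proposal is correct and follows essentially the same approach as the paper: form the triangle with vertices the two slit corners and the intersection point of the extended slits, then apply the four-parts formula~\eqref{eq:four-part-formula} to read off $\ell_{a,\max}$ and $\ell_{b,\max}$. The paper's own proof is a one-liner (``Apply the four-parts formula'') accompanied by a figure of that very triangle; you have simply written out the geometric justification that the figure is meant to convey, including the verification that the intersection point lies in the hemisphere and that the angles at the two slit corners are $|\phi_a|$ and $\beta_b'/2-|\phi_b'|$.
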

\begin{proof}
Apply the four-parts formula, listed as formula~\eqref{eq:four-part-formula} in Appendix~\ref{apx:trig-formulae}.
\begin{center}
\begin{tikzpicture}[scale=1, every node/.style={inner sep=1mm}]
\node[anchor=south west, inner sep=0mm] at (-2.44, -8.47) {\reflectbox{\includegraphics[width=4.5cm]{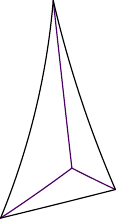}}};
\coordinate (A) at (-2.4, -7.33);
\coordinate (B) at (2.04, -8.43);
\coordinate (M) at (-0.72, -6.5);

\draw[apricot] (A) ++(22:0.6) arc (22:-10:0.6) node[inner sep=1.5mm, anchor=100] {$\phi_a$};
\draw (A) ++(-9:0.3) arc (-9:61:0.3) node[anchor=-10] {$\beta_a/2$};
\draw (B) ++(116:0.3) arc (116:163:0.3) node[anchor=60] {$\beta_b'/2$};
\draw[apricot] (B) ++(144:0.6) arc (144:115:0.6) node[anchor=200] {$\phi_b'$};

\path (A) edge[draw=none, bend right=1] node[pos=0.45, inner sep=1.3mm, anchor=north] {$d$} (B);
\path[plum] (A) -- (M) node[inner sep=0.5mm, pos=0.5, anchor=north west] {$\ell_{a,\text{max}}$};
\path[plum] (B) -- (M) node[inner sep=0mm, pos=0.6, anchor=south west] {$\ell_{b,\text{max}}$};
\end{tikzpicture}
\end{center}
\end{proof}

Unfolding a hamantash produces a geodesic polygon (not necessarily convex). To unfold a hamantash with one slit, we extend the slit geodesically until it hits the equator. Then we cut the hamantash along the extended slit and the equator segment that it hits. This splits the hemisphere containing the slit into two flaps, which we flip over the equator, producing a hyperbolic pentagon. In the limiting case where the slit runs along the equator, we end up cutting along two segments of the equator and flipping an entire hemisphere over the third segment, unfolding the hamantash into a hyperbolic quadrilateral.
\begin{center}
\vspace{2mm}
\begin{tikzpicture}[scale=0.9]
\node[anchor=south west, inner sep=0mm] {\includegraphics[width=7.2cm]{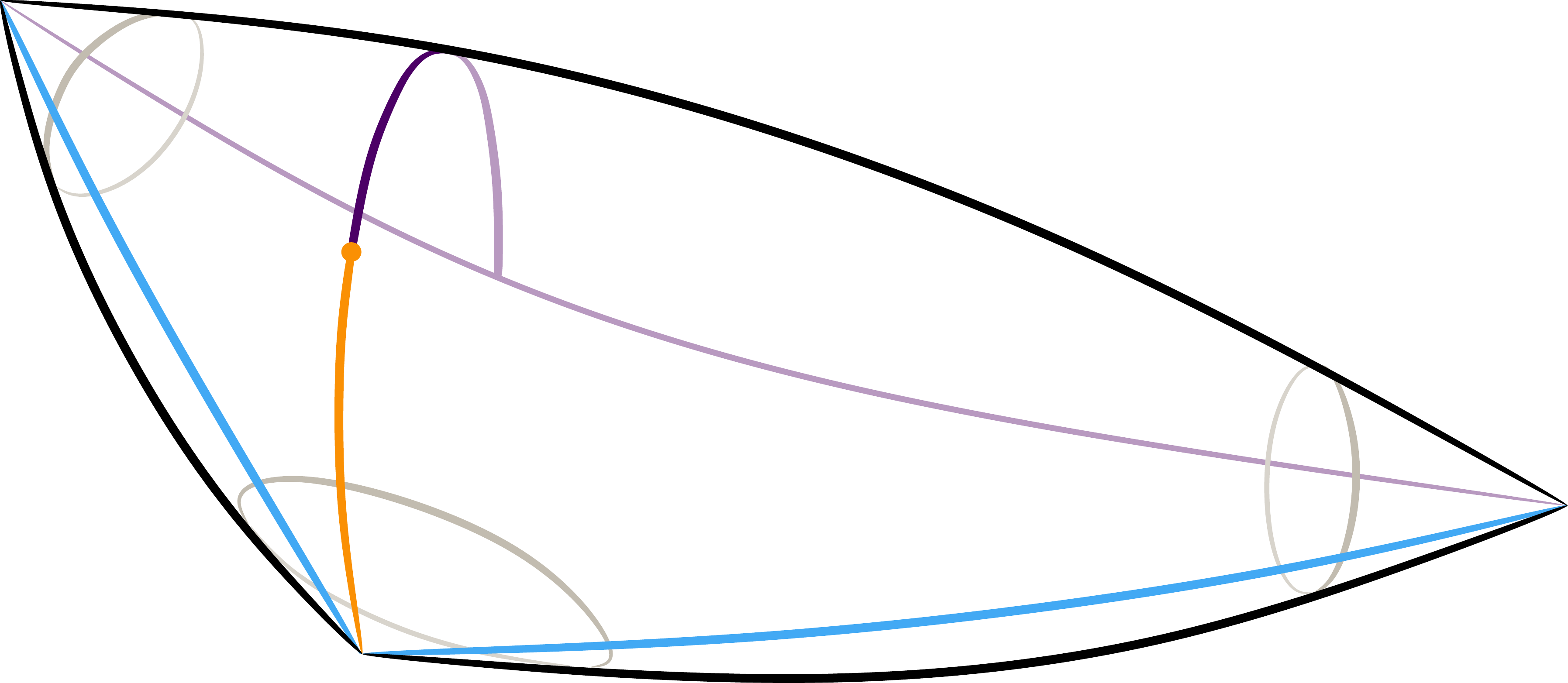}};
\node[anchor=south west, inner sep=0mm] at (-2.5, -6) {\reflectbox{\includegraphics[width=5.4cm, angle=-90]{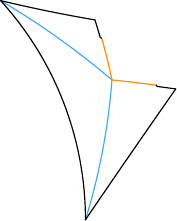}}};
\draw[plum] (0.6, 1) edge[->, bend right=10] node[circle, anchor=-45] {unfold} (-0.9, -0.7);
\end{tikzpicture}
\end{center}

To unfold a hamantash with two slits, we extend the slits geodesically until they meet at a point, and we add a geodesic segment from the meeting point to the un-slit corner. Then we cut the hamantash along the extended slits and the added segment. This splits the hemisphere containing the slits into three flaps, which we flip over the equator, producing a hyperbolic hexagon. In the special case where one or both of the slits run along the equator, the hamantash unfolds into a hyperbolic pentagon or quadrilateral, respectively.
\begin{center}
\vspace{2mm}
\begin{tikzpicture}[scale=0.9]
\node[anchor=south west, inner sep=0mm] {\includegraphics[width=5.4cm]{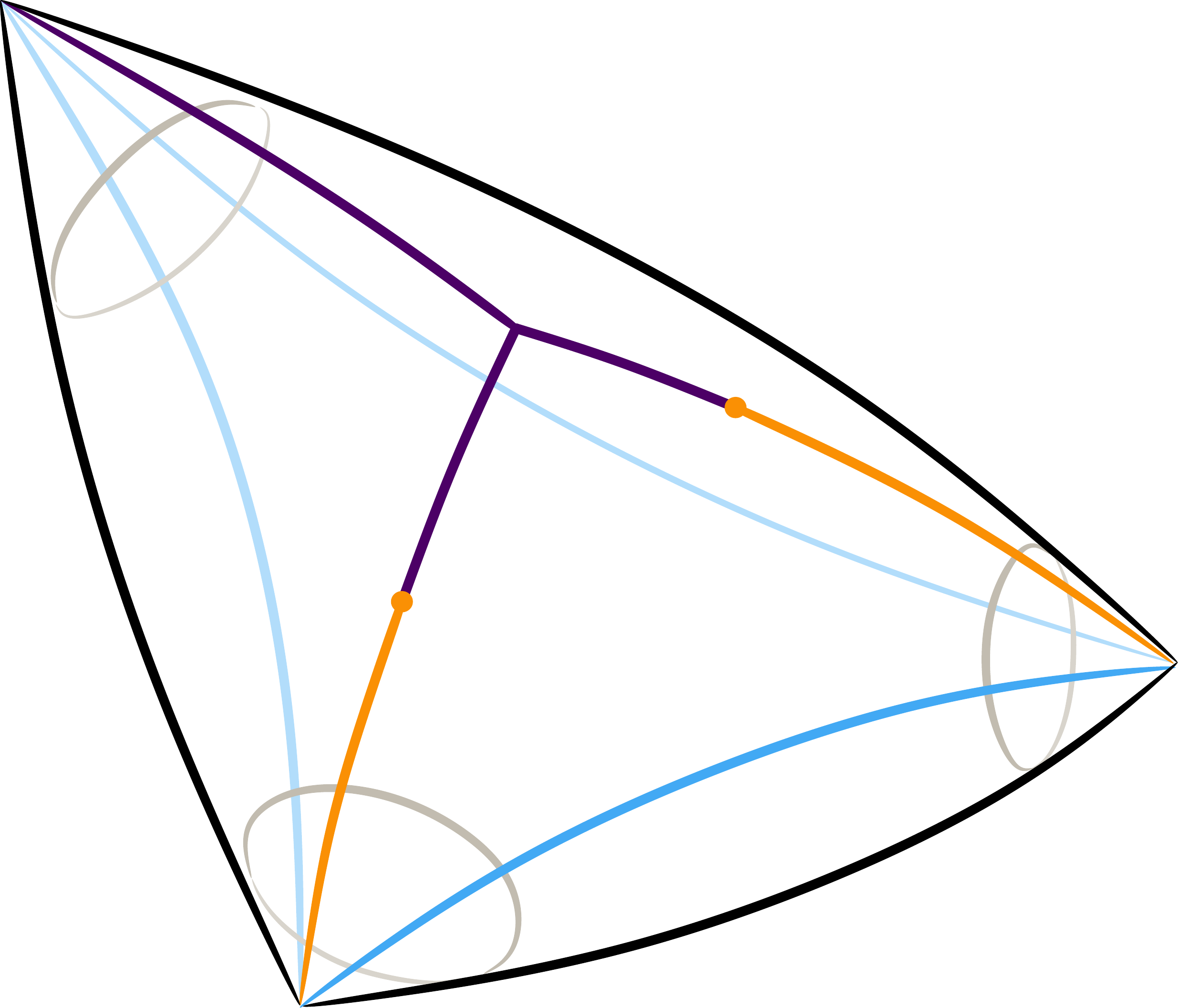}};
\node[anchor=south west, inner sep=0mm] at (-6, -1) {\reflectbox{\includegraphics[width=4.05cm]{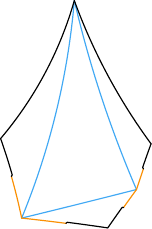}}};
\draw[plum] (-0.25, 3.6) edge[->, bend right=10] node[circle, anchor=-70] {\textnormal{unfold}} (-2, 3);
\end{tikzpicture}
\vspace{1mm}
\end{center}

We embed an unfolded hamentash in the hyperbolic plane by putting its interior side face-up. Equivalently, we embed it by an orientation-reversing local isometry.

\begin{rem}
If a hamantash has its slits in the northern hemisphere, the southern hemisphere stays intact when we unfold. By definition, the equator runs counterclockwise around the southern hemisphere as seen from the outside of the hamantash, so our orientation-reversing isometry embeds it as a clockwise loop in the hyperbolic plane. Similarly, if a hamantash has its slits in the southern hemisphere, the equator gets embedded as a counterclockwise loop in the hyperbolic plane.

If a hamantash has all of its slits on the equator, both of the conditions above hold. Both hemispheres stay intact when we unfold, embedding two copies of the equator in the hyperbolic plane---one clockwise and one counterclockwise.
\end{rem}
\subsection{Unfolding samosa assemblies}\label{sec:hamantash-assemblies}
Now that we know how to unfold a single samosa, we can say what it means for a samosa assembly to be unfoldable.
\begin{defn}\label{defn:hamantash-assembly}
A samosa assembly is called a {\em hamantash assembly} if the realization of each pair of pants into a slit samosa actually produces a hamantash (Definition~\ref{defn:hamantash}).
\end{defn}
The subspace of hamantash assemblies is denoted $\Haman_{\alpha, \Upsilon}\subset \Assemb_{\alpha, \Upsilon}$. A hamantash assembly can be unfolded to a collection of hyperbolic polygons embedded in the hyperbolic plane. Using side identifications, all the pieces can be glued back together to produce the realization of the hamantash assembly. The process of gluing the hamantashen along their slits commutes with the processes of unfolding and re-folding the hamantashen. In particular, we can glue the hamantashen along their slits while they're laid out, unfolded, on the hyperbolic plane. This produces a net for the realization---although the net might overlap itself.
\begin{center}
\vspace{2mm}
\reflectbox{\includegraphics[width=9cm]{fig/net-chain.pdf}}
\vspace{2mm}
\end{center}
When none of the slits lie on equator segments opposite from un-slit corners, as discussed in Section~\ref{sec:marking}, the realized hamantash assembly comes with a canonical isotopy class of homeorphisms to the generic realization $\AssembSphereExt{\Upsilon}$. Thus, the process of unfolding, gluing along the slits, and then re-folding is the restriction $R \colon \Haman_{\alpha, \Upsilon}^\text{gen} \to \HypCone{\alpha}(\AssembSphereExt{\Upsilon})$ of the realization map, where $\Haman_{\alpha, \Upsilon}^\text{gen} = \Assemb_{\alpha, \Upsilon}^\text{gen} \cap \Haman_{\alpha, \Upsilon}$.

\section{Parameterizing DT representations}\label{chap:parametrizing-DT}
\subsection{The holonomies of hamantashen}\label{sec:holonomy-hamantash}
When we use hamantash assemblies to parameterize hyperbolic cone structures on $\AssembSphereExt{\Upsilon}$, the holonomy map has a nice description. It just erases some details of the unfolded hamantash assembly, turning it into a triangle chain that specifies a DT representation as per Section~\ref{sec:dt-review}.

Given a chained pants assembly $\Upsilon$, let $\Haman_{\alpha, \Upsilon}^{\textrm{north}}\subset \Haman_{\alpha, \Upsilon}$ be the subset consisting of hamantash assemblies whose slits all lie in northern hemispheres. The realization map extends continuously from $\Haman_{\alpha, \Upsilon}^\text{gen}$ to $\Haman_{\alpha, \Upsilon}^\text{north}$. To see why, first recall from Section~\ref{sec:marking} why the realization map fails to extend from $\Assemb_{\alpha, \Upsilon}^\text{gen}$ to $\Assemb_{\alpha, \Upsilon}$. The problem is that a non-generic samosa assembly comes with several isometry classes of identifications with $\AssembSphereExt{\Upsilon}$, and the choice between them can't be made continuously throughout $\Assemb_{\alpha, \Upsilon}$. In $\Haman_{\alpha, \Upsilon}^\text{north}$, we can make a continuous choice of identification by pushing all the slits infinitesimally into the interior of the northern hemisphere.

The holonomy map
\[ \Haman_{\alpha, \Upsilon}^{\textrm{north}}\overset{R}{\longrightarrow}\HypCone{\alpha}(\AssembSphereExt{\Upsilon}) \overset{\hol}{\longrightarrow} \RepDT{\alpha}(\AssembSpherePk{\Upsilon}). \]
can be understood as follows. Unfolding a hamantash assembly produces a collection of hyperbolic polygons embedded in the hyperbolic plane. These polygons are glued together as described after Definition~\ref{defn:hamantash-assembly}. An unfolded hamantash in $\Haman_{\alpha, \Upsilon}^{\textrm{north}}$ consists of a triangle, which is the southern hemisphere, and one to three flaps, which are the pieces of the northern hemisphere. For every pants curve in $\mathcal{B}$, there are two triangles meeting at a shared vertex, with the adjacent flaps glued together along the cut edges of a slit. Erasing the flaps gives a triangle chain, laid out along the spine of the hamantash assembly, with the geometric features described in Section~\ref{sec:dt-review}. Since all the triangles in the chain are southern hemispheres, they all lie on the same side of the spine introduced in Section~\ref{sec:marking}.
\begin{center}
\begin{tikzpicture}
\node[anchor=west] at (0.5, 0) {\reflectbox{\includegraphics[width=6cm]{fig/net-chain.pdf}}};
\node[anchor=east] at (-0.5, 0) {\reflectbox{\includegraphics[width=6cm]{fig/triangle-chain.pdf}}};

\draw (1.5,1.5) edge[bend right=30,->] node[midway, anchor=south] {erase the flaps} (-2,1.5);
\end{tikzpicture}
\end{center}
We'll show in Theorem~\ref{thm:holonomy-hamantash} that for each $b\in \mathcal{B}$, the action and angle coordinates $\beta_b$ and $\gamma_b$ of the triangle chain can be written in terms of hamantash assembly parameters as $\beta_b$ and $\phi'_b+\beta_b/2-\phi_b$, respectively. These hamantash assembly parameters normally can't be added to each other, because $\beta_b$ is real-valued, while $\phi_b$ and $\phi'_b$ take values in circles of different circumferences. On $\Haman_{\alpha, \Upsilon}^\textnormal{north}$, however, we can take $\phi_b$ and $\phi'_b$ to be valued in the real intervals $[0, \beta_b/2]$ and $[0, \beta'_b/2]$, respectively.
\begin{center}
\begin{tikzpicture}[scale=.9, every node/.style={inner sep=0.5mm}]
\node[anchor=south west, inner sep=0mm] at (-4.96, -3.07) {\reflectbox{\includegraphics[width=9cm]{fig/net-gluing.pdf}}};
\coordinate (S) at (-0.25, 0.75);
\coordinate (T) at (0.48, -0.67);
\draw[apricot] ++(68:0.3) arc (68:103:0.3) node[at start, right] {\small $\beta_b/2-\phi_b$};
\draw[apricot] ++(109:0.4) arc (109:158:0.4) node[at end, below left] {\small $\phi_b'$};
\draw[sky] ++(68:0.67) arc (68:155:0.67) node[near end, above left] {$\gamma_b$};
\draw[apricot] ++(-20:0.4) arc (-20:-50:0.4) node[near start, below right] {\small $\phi_b$};
\end{tikzpicture}
\end{center}
\begin{thm}\label{thm:holonomy-hamantash}
Let $\Upsilon$ be a chained pants assembly, and $\mathcal{B}$ its set of pants curves. Recall from Section~\ref{sec:marking} that $\pi_1\AssembSpherePk{\Upsilon}$ comes with a distinguished geometric presentation, whose standard pants decomposition is $\mathcal{B}$. Use the associated action-angle coordinates to parameterize $\IntRepDT{\alpha}{\mathcal B}(\AssembSpherePk{\Upsilon})$, as described in Corollary~\ref{cor:action-angle} and the paragraphs before it.

Under these conditions, the diagram below is well-defined and commutative. This implies, among other things, that the image of the holonomy map falls within $\IntRepDT{\alpha}{\mathcal B}(\AssembSpherePk{\Upsilon})$. Furthermore, the map $\Haman_{\alpha, \Upsilon}^\textnormal{north} \to \overset{\circ}{\Delta} \times [0, \pi]^\mathcal{B}$ is surjective.
\begin{center}
\begin{tikzcd}[row sep=1.5cm]
\Haman_{\alpha, \Upsilon}^{\textnormal{north}} \arrow[rr, bend left, "\textnormal{holonomy map}"] \arrow[r, "R"'] \arrow[d, "{(\beta,\;\phi'+\beta/2-\phi)}"'] & \HypCone{\alpha}(\AssembSphereExt{\Upsilon}) \arrow[r, "\hol"'] & \RepDT{\alpha}(\AssembSpherePk{\Upsilon}) \arrow[d, "{(\beta,\;\gamma)}"] \\
\overset{\circ}{\Delta} \times [0, \pi]^\mathcal{B} \arrow[rr] & & \Delta \times (\R/2\pi\Z)^\mathcal{B}
\end{tikzcd}
\end{center}
On the right-hand side of the diagram, $\beta$, $\gamma$ are the action-angle coordinates associated with $\mathcal{B}$. On the left-hand side, $\beta$, $\phi$, $\phi'$ are hamantash assembly parameters, which we treat as real-valued in the way we described before stating this theorem.
\end{thm}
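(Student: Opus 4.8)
The plan is to reduce everything to one geometric identification: for a hamantash assembly $\sigma \in \Haman_{\alpha, \Upsilon}^{\textnormal{north}}$, the holonomy $\rho := \hol(R(\sigma))$ is exactly the DT representation that the recipe of Section~\ref{sec:dt-review} attaches to the triangle chain obtained from the unfolded $\sigma$ by erasing the flaps. Granting this, the rest is bookkeeping. The bottom arrow of the diagram is the tautological inclusion (the identity on the polytope factor, $[0,\pi]\hookrightarrow\R/2\pi\Z$ on each circle factor), so commutativity amounts to computing the action--angle coordinates of $\rho$ directly from the chain. Well-definedness of the right-hand vertical map on the image of $\hol$ is the assertion that $\rho$ avoids degenerate triangles, i.e.\ lies in $\IntRepDT{\alpha}{\mathcal B}(\AssembSpherePk{\Upsilon})$; and the left-hand vertical map lands in $\overset{\circ}{\Delta}\times[0,\pi]^{\mathcal B}$ because $\beta$ ranges over $\overset{\circ}{\Delta}$ by the cone-angle conditions (Section~\ref{sec:assembly-instructions-parametrization}) while, using the real representatives $\phi_b\in[0,\beta_b/2]$, $\phi'_b\in[0,\pi-\beta_b/2]$ available on $\Haman_{\alpha,\Upsilon}^{\textnormal{north}}$, one has $\phi'_b+\beta_b/2-\phi_b\in[0,\pi]$.

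For the identification, I would unfold $\sigma$ as in Section~\ref{sec:hamantash-assemblies}, so that the developing map of $R(\sigma)$ restricts, on the lifts of the southern hemispheres, to the given orientation-reversing embeddings of those hyperbolic triangles. It then remains to recognise this family of developed triangles as the triangle chain of $\rho$: (a) the un-slit corner of the $k$-th southern triangle is the fixed point of the holonomy around the corresponding puncture, since a loop around a cone point of angle $2\pi-\alpha_p$ fixes the developed cone point and, after the orientation-reversing embedding, rotates by $\alpha_p$, matching the DT normalisation, so this corner is the vertex $C_i$ for the puncture index dictated by the distinguished presentation of Section~\ref{sec:marking}; (b) the slit-origin corner shared by two consecutive southern triangles is the fixed point of $\rho(b_b)$, because a loop around the pants curve $b$ is homotopic to the glued-slit loop, whose developed holonomy rotates about the developed slit origin; (c) the standard pants decomposition of that presentation is $\mathcal B$, by the last paragraph of Section~\ref{sec:marking}. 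Since each southern hemisphere is a genuine hyperbolic triangle---its angles are half the samosa corner angles, summing to less than $\pi$ by Gauss--Bonnet---no triangle of the chain is degenerate, which gives $\rho\in\IntRepDT{\alpha}{\mathcal B}(\AssembSpherePk{\Upsilon})$ and makes the diagram well-defined.

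Reading off the coordinates is then direct. At the shared vertex $B_b$ the southern triangle on the left-cuff side has interior angle $\pi-\beta_b/2$ and the one on the right-cuff side has interior angle $\beta_b/2$; Maret's choice of which angle is ``first'' coincides with the left/right convention of the distinguished presentation, so the action coordinate of $\rho$ at $b$ equals the samosa parameter $\beta_b$. For the angle coordinate $\gamma_b$---the angle at $B_b$ between the two southern triangles---I would follow the cyclic order of wedges around the smooth point $B_b$ created by the conical cut and paste: passing from the right-cuff triangle to the left-cuff triangle one crosses a flap of the right samosa of width $\beta_b/2-\phi_b$ and a flap of the left samosa of width $\phi'_b$ (precisely the flaps erased at $B_b$), giving $\gamma_b=\phi'_b+\beta_b/2-\phi_b$ in $\R/2\pi\Z$. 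Together with the previous sentence, this is the commutativity of the diagram.

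Finally, for surjectivity of $\Haman_{\alpha,\Upsilon}^{\textnormal{north}}\to\overset{\circ}{\Delta}\times[0,\pi]^{\mathcal B}$, given $(\beta^{\ast},\gamma^{\ast})$ I would construct a preimage by hand: set $\beta=\beta^{\ast}$ (a legitimate choice of the $\beta$-parameters, since the cone-angle conditions cut $\overset{\circ}{\Delta}$ out of $(0,2\pi)^{\mathcal B}$); for each $b$ choose $\phi_b\in[0,\beta^{\ast}_b/2]$ and $\phi'_b\in[0,\pi-\beta^{\ast}_b/2]$ with $\phi'_b-\phi_b=\gamma^{\ast}_b-\beta^{\ast}_b/2$, which is possible because $\gamma^{\ast}_b-\beta^{\ast}_b/2$ sweeps exactly the interval $[-\beta^{\ast}_b/2,\pi-\beta^{\ast}_b/2]$ as $\gamma^{\ast}_b$ sweeps $[0,\pi]$; and take all slit lengths $\ell_b$ small enough that each slit is valid and stays in its northern hemisphere, which holds for all sufficiently small positive $\ell_b$ by Lemmas~\ref{lem:bound-on-l-one-slit} and~\ref{lem:bound-on-l-two-slits}. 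The resulting $\sigma$ lies in $\Haman_{\alpha,\Upsilon}^{\textnormal{north}}$ and has the prescribed image. I expect the main obstacle to be step~(c) together with the angle computation: tracking how the slit lips of the two glued samosas interleave around $B_b$ and confirming that the ``angle between the triangles'' produced there is Maret's $\gamma_b$ and not $\pi-\gamma_b$ or $2\pi-\gamma_b$---in other words, matching the orientation conventions of the unfolding, the conical cut and paste, and Section~\ref{sec:dt-review}. A secondary delicate point is the boundary locus $\gamma^{\ast}_b\in\{0,\pi\}$, where a slit is forced onto an equator segment; these limiting hamantashen have to be treated as members of $\Haman_{\alpha,\Upsilon}^{\textnormal{north}}$, in keeping with the continuous extension of $R$ to $\Haman_{\alpha,\Upsilon}^{\textnormal{north}}$ discussed at the beginning of Section~\ref{sec:holonomy-hamantash}.
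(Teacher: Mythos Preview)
Your proposal is correct and follows essentially the same route as the paper: unfold the hamantash assembly into an orientation-reversed polygon in $\HH$, identify the triangle chain obtained by erasing flaps with the triangle chain encoding the holonomy (by checking that $\rho(c_i)$ is the rotation by $\alpha_i$ about $C_i$), and then read off the action--angle coordinates from the angles at the shared vertices. Your surjectivity argument is slightly more explicit than the paper's, which only records the ranges of $\beta$ and $\phi'+\beta/2-\phi$; and you are right to flag the orientation bookkeeping and the boundary cases $\gamma_b\in\{0,\pi\}$ as the points requiring care, matching the paper's treatment via the continuous extension of $R$ to $\Haman_{\alpha,\Upsilon}^{\textnormal{north}}$.
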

\begin{proof}
Pick a hamantash assembly $H\in\Haman_{\alpha, \Upsilon}^\textnormal{north}$. As explained above, the realization of $H$ comes with an isotopy class of homeomorphisms to $\AssembSphereExt{\Upsilon}$, giving a hyperbolic cone structure $R(H) \in \HypCone{\alpha}(\AssembSphereExt{\Upsilon})$ on $\AssembSphereExt{\Upsilon}$. Let's fix one of those homeomorphisms, identifying the realization of $H$ with the hyperbolic cone sphere $(\AssembSphereExt{\Upsilon}, h_\HypConeStruct)$ given by some cone structure $\HypConeStruct \in R(H)$.

Let $c_1, \ldots, c_n$ be the elements of $\pi_1\AssembSphereExtPk{\Upsilon}$ that push forward to the distinguished geometric presentation of $\pi_1\AssembSpherePk{\Upsilon}$, described in Section~\ref{sec:marking}. Recall that each loop encloses a different puncture of $\AssembSpherePk{\Upsilon}$, identifying the set of punctures with $\{1, \ldots, n\}$. We'll use this labeling of the punctures throughout the proof.

Unfold $(\AssembSphereExt{\Upsilon}, h_\HypConeStruct)$ into a bunch of polygons and glue the polygons together as described above. We get a larger polygon $P$ which is immersed in $\HH$ by an orientation-reversing local isometry, potentially overlapping itself. The marked points of $(\AssembSphereExt{\Upsilon}, h_\HypConeStruct)$ unfold to vertices of $P$, which we'll call $C_1,\ldots,C_n$. The angle of $P$ at $C_i$ is equal to $2\pi-\alpha_i$. If we erase the flaps of $P$, as described above, we get a triangle chain $T$. On other other hand, if we remove the vertices of $P$ and develop the resulting polygon $P'$ across its sides, we get a universal covering $\widetilde{P} \to \AssembSphereExtPk{\Upsilon}$ which is a local isometry with respect to $\HypConeStruct$. Continuing the immersion $P' \to \HH$ along this covering gives an orientation-reversing developing map $\widetilde{P} \to \HH$ for $\HypConeStruct$, as defined in Section~\ref{sec:space-of-hyperbolic-cone-metrics}. Post-compose with a flip to get an ordinary developing map, and let $\rho_P \maps \pi_1\AssembSphereExtPk{\Upsilon} \to \psl$ be the corresponding holonomy representation.

By construction, $\widetilde{P}$ is tiled with copies of $P'$, which are indexed by the elements of $\pi_1\AssembSphereExtPk{\Upsilon}$. We can define $P'_\zeta$ as the tile you end up in if you follow a path on $\widetilde{P}$ that projects to $\zeta \in \pi_1\AssembSphereExtPk{\Upsilon}$. The action of $\zeta$ on $\widetilde{P}$ brings $P'_\zeta$ back to $P'$. Correspondingly, the action of $\rho_P(\zeta)$ on $\HH$ brings the image of $P'_\zeta$ back to the image of $P'$ under the developing map.

Recall that each loop $c_i$ runs mostly through the southern hemisphere of a samosa, briefly entering the northern hemisphere when it loops counterclockwise around the un-slit corner $i$. As a result, when we unfold $(\AssembSphereExt{\Upsilon}, h_\HypConeStruct)$, the loop $c_i$ is only cut in one place: across the sides of $P$ adjacent to $C_i$. That means the tile $P'_{c_i} \subset \widetilde{P}$ is adjacent to $P'$. Since $P \to \HH$ is orientation-reversing, we cross from the image of $P'$ into the image of $P'_{c_i}$ by going clockwise around $C_i$. Thus, to bring the image of $P'_{c_i}$ back to the image of $P'$, we rotate it by $2\pi - \alpha_i$ counterclockwise around $C_i$. Composing $P \to \HH$ with a flip to get an orientation-preserving chart, we learn that $\rho_P(c_i)$ is the clockwise rotation of angle $2\pi - \alpha_i$ around $C_i$.

Recall from Section~\ref{sec:space-of-hyperbolic-cone-metrics} that $\rho_P$ factors through the projection $\pi_1\AssembSphereExtPk{\Upsilon} \to \pi_1\AssembSpherePk{\Upsilon}$ to become a representation of $\pi_1\AssembSpherePk{\Upsilon}$, which we also call $\rho_P$. The holonomy map $\hol \circ R$ sends $H$ to the conjugacy class of $\rho_P \maps \pi_1\AssembSpherePk{\Upsilon} \to \psl$. With that in mind, let's go back to the triangle chain $T$ that we got by erasing the flaps of $P$. This triangle chain describes a representation $\rho_T \maps \pi_1\AssembSpherePk{\Upsilon} \to \psl$, as discussed in Section~\ref{sec:dt-review}. By definition, $\rho_T(c_i)$ is the counterclockwise rotation of angle $\alpha_i$ around around $C_i$. In light of the previous paragraph, that means $\rho_T = \rho_P$. The holonomy map therefore sends $H$ to the conjugacy class of $\rho_T$. You can now compare the angles in $T$ that give the action-angle coordinates of $\rho_T$ with the angles in $P$ that give the hamantash parameters $\beta, \phi, \phi'$. You'll see that the action coordinates are $\beta$ and the angle coordinates are $\phi'+\beta/2-\phi$, as we wanted to show.

The range of the admissible values of $\beta$ for hamantash assemblies is the same as the one for samosa assemblies, which is given by the inequalities after Definition~\ref{defn:samosa-assembly}. The inequalities define the interior of a polytope that corresponds to the polytope of Corollary~\ref{cor:action-angle} showing that the image of the holonomy map is contained in $\IntRepDT{\alpha}{\mathcal B}(\AssembSpherePk{\Upsilon})$. Since every slit lies in the northern hemisphere, we can take $\phi_b'$ and $\phi_b$ to be valued in $[0, \beta'_b/2]$ and $[0, \beta_b/2]$, respectively. The angle $\gamma_b = \phi_b'+\beta_b/2-\phi_b$ therefore lies in $[0, \pi]$.\end{proof}

\begin{question}\label{q:get-coordinates-for-Teich-?}
A consequence of Theorem~\ref{thm:holonomy-hamantash} is that the slit lengths $\ell$ have no influence of the holonomy of the cone structure. Similarly, changing the slit angles $\phi$ and $\phi'$ while preserving their difference leaves the holonomy unchanged. However, both of these changes affect the conformal structure on $\AssembSphere{\Upsilon}$. It would be interesting to investigate whether the slit lengths and some combination of the slit angles could meaningfully parameterize all or part of $\Teich(\AssembSphere{\Upsilon})$ through the mapping
\[
\Haman_{\alpha, \Upsilon}^\textnormal{gen} \longrightarrow \HypCone{\alpha}(\AssembSphereExt{\Upsilon}) \overset{\tau}{\longrightarrow} \Teich (\AssembSphere{\Upsilon})
\]
(Section~\ref{sec:space-of-hyperbolic-cone-structures}). If they could, how would this parameterization relate to the traditional Fenchel-Nielsen coordinates on Teichmüller space?
\end{question}
\begin{rem}
Each action coordinate $\beta_b$ generates a Hamiltonian ``twist flow'' on the space of DT representations, which increases the angle coordinate $\gamma_b$ at unit speed. We can locally lift this flow to the space of samosa assemblies by decreasing $\phi_b$ at unit speed, increasing $\phi'_b$ at unit speed, or doing some convex combination of those. Since Theorem~\ref{thm:holonomy-hamantash} only applies to $H\in\Haman_{\alpha, \Upsilon}^\textnormal{north}$, it would take some thought to continue the flow when a slit leaves the northern hemisphere. To keep the flow continuous, we expect to have to adjust the presentation of $\pi_1\AssembSpherePk{\Upsilon}$ by a Dehn twist each time a slit goes through an equator segment opposite an un-slit corner, based on the discussion in Section~\ref{sec:marking}. This shouldn't conflict with the fact that $\phi_b$ and $\gamma_b$ have periods $\beta_b$ and $2\pi$, respectively, because a Dehn twist along $b$ only changes $\gamma_b$ by $\beta_b$.
\end{rem}
\subsection{Finding a suitable pants decomposition}\label{sec:no-overlap-no-degenerate-triangles}
We want to use the description of the image of the holonomy map in Theorem~\ref{thm:holonomy-hamantash} to realize every DT representation $\rho$ as the holonomy of a hyperbolic cone metric built from a hamantash assembly. In order to do that, we must first find a geometric presentation of $\pi_1\SpherePk$ and a compatible pants decomposition of $\SpherePk$ for which the action-angle coordinates of $\rho$, whose construction was described in Section~\ref{sec:dt-review}, are in the image of the holonomy map of Theorem~\ref{thm:holonomy-hamantash}. The next three propositions guarantee the existence of the desired presentation of $\pi_1\SpherePk$.
\begin{prop}\label{prop:non-degenerate-pants-decomp}
For any DT representation, there exists a chained pants decomposition of $\SpherePk$ for which every triangle in the associated triangle chain has positive area.
\end{prop}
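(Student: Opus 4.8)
The plan is to translate the positive-area condition into a statement about fixed points of rotations of $\HH$, and then to build the required geometric presentation of $\pi_1\SpherePk$ together with its compatible chained pants decomposition one puncture at a time --- ``playing the game'' and choosing each move so as never to get stuck. Two features of a DT representation $\rho$ are used throughout: $\rho$ is totally elliptic, so the holonomy of every essential simple closed curve on $\SpherePk$ is a non-trivial rotation of $\HH$ (hence the fixed points below are well defined); and $\rho$ is Zariski dense, so $\rho(\pi_1\SpherePk)$ is non-abelian and fixes no point of $\HH$.

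\emph{Reduction.} Fix a chained pants decomposition compatible with a geometric presentation $c_1,\dots,c_n$, with pants curves $b_1,\dots,b_{n-3}$, and write $C_i=\mathrm{Fix}(\rho(c_i))$, $B_j=\mathrm{Fix}(\rho(b_j))$ as in Section~\ref{sec:dt-review}. For each pair of pants of the chain, composing the rotations around two of its three vertices produces the rotation around the third, and the centre of that product lies off the geodesic through the first two unless those two coincide; hence a chain triangle has zero area precisely when its three vertices coincide. Using that two non-trivial rotations of $\HH$ commute if and only if they share a fixed point (so a product of rotations shares a fixed point with one factor only if it shares it with the other), one checks that this collapse happens for the pair of pants formed at stage $k$ exactly when $\rho$ of its two ``inner'' boundary curves have a common fixed point. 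Writing $P_k=\rho(c_1\cdots c_k)$, the chain therefore has no degenerate triangle if and only if $\mathrm{Fix}(P_k)\neq\mathrm{Fix}(\rho(c_{k+1}))$ for every $k=1,\dots,n-2$; equivalently, the fixed points $\mathrm{Fix}(P_1),\dots,\mathrm{Fix}(P_{n-1})$ of the successive ``prefix curves'' have no two consecutive terms equal.

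\emph{The game.} Build $c_1,\dots,c_n$ and the prefix curves from the left; the first move ($k=1$) is unconstrained, so pick any $p_1$. For $k\geq 2$, having chosen $p_1,\dots,p_{k-1}$ with compatible loops, $P_{k-1}$ is the holonomy of the curve enclosing them, a non-trivial rotation fixing a point $x_{k-1}$, and we write $\rho_U$ for the restriction of $\rho$ to the fundamental group of the subsurface of $\SpherePk$ carrying exactly the not-yet-used punctures $U$. The next move is: choose $m\in U$ and a compatible loop $c_m$ with $\mathrm{Fix}(\rho(c_m))\neq x_{k-1}$, and set $p_k=m$. Such a choice exists whenever $\rho_U$ is non-abelian: if every compatible loop of every puncture of $U$ fixed $x_{k-1}$ then $\rho_U$ would be abelian, so some unused puncture $m_0$ carries a loop that is a rotation about a point other than $x_{k-1}$ --- and if we want instead a different puncture $m$ whose standard loop fixes $x_{k-1}$, we may first conjugate $c_m$ by $c_{m_0}$ (or by powers of $P_{k-1}$), which carries its fixed point off $x_{k-1}$ and is a legitimate change of compatible loop realised by braid moves and Dehn twists. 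It remains to keep $\rho_U$ non-abelian as $U$ shrinks. It starts non-abelian because $\rho$ is; and whenever $|U|\geq 3$ and $\rho_U$ is non-abelian, some puncture of $U$ can be deleted leaving it non-abelian --- for if deleting any one puncture abelianised $\rho_U$, then any two unused loops, lying in the abelian subgroup obtained by deleting some third puncture, would commute, forcing $\rho_U$ itself abelian. One checks that the deleted puncture can be chosen to also meet the fixed-point requirement of the current move: in the bad configuration where the only unused loop avoiding $x_{k-1}$ is $c_{m_0}$ and deleting $m_0$ abelianises $\rho_U$, we instead delete some other $m\in U$ (which keeps $\rho_U$ non-abelian) after conjugating $c_m$ by $c_{m_0}$. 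When $|U|$ reaches $2$, non-abelianness of $\rho_U$ says exactly that the last two loops have distinct fixed points, which is the positive-area condition for the last terminal triangle. Completing the construction yields a geometric presentation --- hence a compatible chained pants decomposition --- whose triangle chain has no degenerate triangle; the decomposition can then be put into standard form via Proposition~\ref{prop:non-degenerate-standard-pants-decomp}.

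\emph{Main obstacle.} The delicate part is the bookkeeping inside the inductive step: identifying exactly which compatible loops are available for the newly added puncture, describing how Dehn twists along the prefix curves and braid moves within the unused subsurface move its fixed point, and verifying that the ``delete while staying non-abelian'' choice can always be reconciled with the ``fixed point $\neq x_{k-1}$'' requirement. Everything else rests only on the elementary fact that two non-trivial rotations of $\HH$ commute if and only if they share their fixed point, together with non-abelianness of $\rho$; total ellipticity is needed just to guarantee that every prefix curve has non-trivial holonomy, so that $\mathrm{Fix}(P_k)$ is defined at each stage.
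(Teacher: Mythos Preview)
Your route is genuinely different from the paper's. The paper fixes a single geometric presentation up front (the punctures on a circle) and plays a much more constrained game: at each step only the two punctures adjacent to the current gap are available, and the core of the argument is a one-move \emph{backtracking} lemma --- if the current position is hopeless, the previous move can be switched, and a short ``color'' lemma (for adjacent disjoint sets $I,J$, the fixed points of $\rho(c_I),\rho(c_J),\rho(c_{I\cup J})$ are either all equal or all distinct) shows the switched move puts you in a strictly better position. You instead let the presentation evolve via Hurwitz moves and carry the invariant ``$\rho_U$ is non-abelian''; this gives more freedom per move, but the paper's argument is cleaner precisely because it avoids tracking how braid moves act on fixed points.

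There is a genuine gap at exactly the place you flag. When you replace $c_m$ by $c_{m_0}c_mc_{m_0}^{-1}$, you have changed the compatible system for $U$, and the subgroup ``$\rho_{U\setminus\{m\}}$'' is not intrinsic: it is the subgroup generated by the \emph{remaining} generators of whichever system you end up in. You established that $H_m=\langle\rho(c_{m'}):m'\neq m\rangle$ is non-abelian in the \emph{old} system, but after the Hurwitz move the remaining generators may differ. The fix does go through if $m_0$ and $m$ are adjacent in the system --- then the single swap $c_{m_0},c_m\mapsto c_{m_0}c_mc_{m_0}^{-1},c_{m_0}$ leaves all other generators literally unchanged, so the subgroup left after deleting $m$ is exactly the old $H_m$ --- but bringing $m_0,m$ adjacent may itself require Hurwitz moves that shift other fixed points and hence alter the sets $A,B$ you are reasoning about. ``One checks'' does not discharge this. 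Separately, your closing appeal to Proposition~\ref{prop:non-degenerate-standard-pants-decomp} is both unnecessary (the present statement does not require the decomposition to be standard) and circular (that proposition invokes this one in its proof).
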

\begin{proof}
 We'll make use of the following two properties of triangle chains from \cite{action-angle}.
\begin{enumerate}
  \item\label{got-non-degen} For any pair of pants decomposition and any DT representation, there is always at least one non-degenerate triangle in the associated chain. This follows from the fact that the total area of the triangles in the chain is a positive constant (essentially equal to the Toledo number of the representation).
  \item\label{all-or-none} A triangle in a chain is degenerate if and only if all three vertices coincide. This is a consequence of the fact that if three elliptic elements of $\psl$ can be multiplied to get the identity, their fixed points either coincide or form a non-degenerate triangle. 
\end{enumerate}

Assume that $n\geq 4$, since the claim is obvious for $n=3$. For concreteness, identify $\SphereMk$ with the one-point compactification of the plane, with the marked points laid out around a circle. Label the marked points $1,\ldots,n$, going counterclockwise around the circle. The marked points then form a cyclically ordered set. Introduce a standard geometric presentation of $\pi_1 \SpherePk$ by putting the base point at the center of the circle and representing the generator $c_i$ by a path that runs outward along a radius of the circle, loops counterclockwise around $i$, and then returns along a radius to the center. More generally, for any set $I$ of consecutive punctures, let $c_I$ be the element of $\pi_1 \SpherePk$ represented by a path that runs outward along a radius, loops counterclockwise around $I$, and then returns along a radius to the center.
\begin{center}
\begin{tikzpicture}[decoration={markings, mark=at position 0.5 with {\arrow[scale=1.5]{Latex[]}}}]
\newcommand{\npunk}{5}
\newcommand{\punkgap}{360/\npunk}

\foreach \k in {1, ..., \npunk} {
  \pgfmathsetmacro{\theta}{360*\k/\npunk:1}
  \fill (\k*\punkgap:1.5) circle (0.08) ++(\k*\punkgap:0.3) node {$\k$};
  \draw[mauve, -{Latex[length=2.5mm, flex]}] (0, 0) .. controls +({(\k-0.45)*\punkgap}:1.0) and +({\k*\punkgap-90}:1.2) .. (\k*\punkgap:2.4) node[outer sep=0.8mm, anchor=\k*\punkgap+180] {$c_\k$};
  \draw[mauve] (\k*\punkgap:2.4) .. controls +({\k*\punkgap+90}:1.2) and +({(\k+0.45)*\punkgap}:1.0) .. (0, 0);
}
\fill[mauve] circle (0.08);

\begin{scope}[shift={(6, 0)}]
\draw[mauve, -{Latex[length=2.5mm, flex]}] (0, 0) .. controls +(3.55*\punkgap:1.0) and +({4*\punkgap-90}:1.2) .. (4*\punkgap:2.5) .. controls +({4*\punkgap+90}:1.0) and +({5*\punkgap-90}:1.0) .. (5*\punkgap:2.5) node[outer sep=0.8mm, anchor=5*\punkgap+180] {$c_{\{4,5,1\}}$};
\draw[mauve] (5*\punkgap:2.5) .. controls +({5*\punkgap+90}:1.0) and +({1*\punkgap-90}:1.0) .. (1*\punkgap:2.5) .. controls +({1*\punkgap+90}:1.2) and +(1.45*\punkgap:1.0) .. (0, 0);
\fill[mauve] circle (0.08);
\foreach \k in {1, ..., \npunk} {
  \pgfmathsetmacro{\theta}{360*\k/\npunk:1}
  \fill (\k*\punkgap:1.5) circle (0.08) ++(\k*\punkgap:0.3) node {$\k$};
}
\end{scope}
\end{tikzpicture}
\end{center}
When $I$ is the whole set of punctures, there are many paths that fit this description, but they all represent the identity in $\pi_1 \SpherePk$. If $I$ and $J$ are adjacent, disjoint sets of consecutive punctures, with $I$ clockwise of $J$, then $c_I c_J = c_{I \cup J}$.\footnote{In fundamental group product expressions, we concatenate the loops from left to right, so $c_I c_J$ means following $c_I$ and then $c_J$.}

The problem of finding the desired pants decomposition for $\rho\colon \pi_1\SpherePk\to \operatorname{PSL}_2\R$ can be phrased as a combinatorial game. Before we explain the rules, we need to paint $\HH$ so that every point is a different color. We then give each puncture $i$ the color of the fixed point of $\rho(c_i)$. More generally, we give each set of consecutive punctures $I$ the color of the fixed point of $\rho(c_I)$. We know that $\rho(c_I)$ is elliptic because DT representations are totally elliptic, and $c_I$ is represented by a simple curve. Observation~\eqref{all-or-none} gives this coloring a useful combinatorial property.
\begin{lem}\label{all-or-none-color}
Suppose $I$ and $J$ are adjacent, disjoint sets of consecutive punctures. Then $I$, $J$, and $I \cup J$ have either all different colors or all the same color.
\end{lem}
\begin{proof}
By switching the labels of the sets if necessary, we can assume that $I$ is clockwise of $J$. We observed earlier that $c_I c_J = c_{I \cup J}$, implying that $\rho(c_I), \rho(c_J)$, and $\rho(c_{I\cup J})^{-1}$ are three elliptic elements of $\psl$ that can be multiplied to get the identity. Observation~\eqref{all-or-none} then gives the desired conclusion.
\end{proof}

Now we can explain the rules of the game. At the beginning of the game, we have the punctures laid out in a circle on the table, matching their arrangement on $\SpherePk$. On the first move, we remove two neighbouring punctures from the table; we're disqualified from winning if we remove two punctures of the same color. Now the circle of punctures has a gap in it. On each subsequent move, we widen the gap by removing one of the two punctures next to it; we're disqualified if we take a puncture of the same color as the set of missing punctures. The game ends when there are two punctures left on the table. If we've been disqualified, or if we're left with two punctures of the same color as the set of missing punctures, we lose. Otherwise, we win. By Lemma~\ref{all-or-none-color}, winning implies that the two remaining punctures and the set of missing punctures all have different colors.

Playing this game is equivalent to building a chained pants decomposition of $\SpherePk$. Each move adds a pants curve, which is determined by the set of missing punctures at the end of the move: calling the set of missing punctures $I$, we add a curve that represents the fundamental group element $c_I$. We lose if we ever make a pair of pants that $\rho$ associates with a degenerate triangle.

Now, let's show that we can always win. By Observation~\eqref{got-non-degen}, we can always make a first move without being disqualified. Now, imagine that we haven't been disqualified yet, but our previous move has left us in a hopeless position: either all of our available moves will disqualify us, or we've ended the game in a losing position. That means the punctures next to the gap both have the same color as the set of missing punctures. If our previous move was not the first move, let $z$ be the puncture we removed on that move, and let $I$ be the set of missing punctures at the beginning of that move. If the previous move was the first move, let $z$ be one of the two punctures we removed, and let $I$ be the set containing the other puncture we removed. In either case, the current set of missing punctures is $I \cup \{z\}$.

Since our previous move didn't disqualify us, $I$ and $z$ have different colors. Let $y$ and $x$ be the punctures next to the gap, with $y$ adjacent to $I$ and $x$ adjacent to $z$. Our hopeless position implies that $y$ has the same color as $I \cup \{z\}$, and Lemma~\ref{all-or-none-color} tells us that $I \cup \{z\}$ has a different color from both $I$ and $z$, so we know that $y$ has a different color from $I$. Thus, on our previous move, we could've removed $y$ instead of $z$ without being disqualified.

Let's go back to the previous move and remove $y$ instead of $z$. Recalling that $I$ and $z$ have different colors, while $I \cup \{z\}$ and $y$ have the same color, we can deduce through the following lemma that $I \cup \{y\}$ and $z$ have different colors.
\begin{lem}
Suppose $I$, $J$, $J'$ are disjoint sets of punctures, with $J$ and $J'$ both adjacent to $I$. Suppose $I$ and $J$ have different colors, while $I \cup J$ and $J'$ have the same color. Then $I \cup J'$ and $J$ have different colors.
\end{lem}
\begin{proof}
If $I \cup J'$ and $J$ had the same color, Lemma~\ref{all-or-none-color} would imply that $I \cup J \cup J'$ had the same color as $I \cup J'$ and $J$. Hence, it's enough to show that $I \cup J \cup J'$ has a different color from $J$.

Lemma~\ref{all-or-none-color} implies that $I \cup J$ has a different color from both $I$ and $J$. It also implies that $I \cup J \cup J'$ has the same color as $I \cup J$ and $J'$. Hence, $I \cup J \cup J'$ has a different color from both $I$ and $J$.
\end{proof}
We now see that by removing $y$ instead of $z$, we've put ourselves in a better position---one where either we can move without being disqualified or we've finished the game in a winning position. If the game hasn't ended yet, the difference in color between $I \cup \{y\}$ and $z$ implies that we can remove $z$ without being disqualified. If the game has ended, the same difference in color implies that we're in a winning position. This shows that the hopeless position we first imagined can always be avoided by changing the previous move.
\end{proof}
The chained pants assembly produced by Propostion~\ref{prop:non-degenerate-pants-decomp} has a defect. When producing the triangle chain, there is no guarantee that all the triangles will be clockwise oriented. This is because we didn't adapt the system of generators of $\pi_1\SpherePk$ to make the pants decomposition standard. We correct this defect with the next proposition. The idea is to move along the triangle chain and flip every ill-oriented triangle by conjugating the corresponding generator, while preserving the same pants decomposition. At the end of the process, the pants decomposition becomes standard for the new system of generators ensuring that every triangle in the chain is now clockwise oriented, as explained in Section~\ref{sec:dt-review}.
\begin{prop}\label{prop:non-degenerate-standard-pants-decomp}
For every DT representation, there exists a geometric presentation of $\pi_1\SpherePk$ such that every triangle in the triangle chain associated with its standard pants decomposition has positive area.  
\end{prop}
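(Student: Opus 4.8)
The plan is to keep the chained pants decomposition $\mathcal B$ produced by Proposition~\ref{prop:non-degenerate-pants-decomp} --- for which every triangle in the triangle chain of the given DT representation $\rho$ is non-degenerate --- and to modify only the generating set of $\pi_1\SpherePk$, without touching $\mathcal B$ as a collection of curves on $\SpherePk$, until $\mathcal B$ becomes the \emph{standard} pants decomposition of the new generators in the sense of Section~\ref{sec:dt-review}. This suffices for two reasons. First, the triangle attached to a pair of pants is assembled from the fixed points of $\rho$ on the three boundary curves of that pair of pants, and those boundary curves are isotopy classes on $\SpherePk$ depending only on $\mathcal B$; based representatives with product the identity pin the triangle down up to an orientation-preserving isometry of $\HH$, so non-degeneracy of every triangle in the chain is an intrinsic property of the pair $(\rho,\mathcal B)$ and automatically survives any change of generators fixing $\mathcal B$ (if one wants an explicit check, Observation~\eqref{all-or-none} in the proof of Proposition~\ref{prop:non-degenerate-pants-decomp} reduces degeneracy to the rigid condition that all three vertices coincide). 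Second, once $\mathcal B$ is standard and $\rho$ lies in the large-angle case, the features of triangle chains recalled in Section~\ref{sec:dt-review} (from \cite[Corollary~3.6]{action-angle}) force the non-degenerate triangles to have clockwise vertex triples with the prescribed angles.

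So the real work is to standardize $\mathcal B$. I would first fix the linear order $P_1,\dots,P_{n-2}$ on its pairs of pants with $P_1,P_{n-2}$ terminal, write $b_1,\dots,b_{n-3}$ for the pants curves in the order they occur along the chain, and record the puncture labels this forces: punctures $1,2$ on $P_1$, the single puncture $j+1$ on each intermediate $P_j$, and punctures $n-1,n$ on $P_{n-2}$. With these labels $b_j$ encloses exactly the punctures $\{1,\dots,j+1\}$, so for any geometric presentation $c_1,\dots,c_n$ compatible with the cyclic order of the punctures, $b_j$ is \emph{freely homotopic} to $(c_1\cdots c_{j+1})^{-1}$. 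The point is that the standard pants decomposition requires the \emph{on-the-nose} relations $b_j=(c_1\cdots c_{j+1})^{-1}$ in $\pi_1\SpherePk$, which a presentation inherited from Proposition~\ref{prop:non-degenerate-pants-decomp} need not satisfy; this is the ``defect'' mentioned before the statement.

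I would repair it by building the presentation inductively along the chain, which is the meaning of ``flipping ill-oriented triangles by conjugating generators'' in the discussion preceding the proposition. Choose loops $c_1,c_2$ around punctures $1,2$ with $(c_1c_2)^{-1}=b_1$, which is possible since $P_1$ is a thrice-punctured sphere whose boundary relation normalizes freely. Assuming $b_i=(c_1\cdots c_{i+1})^{-1}$ for all $i<j$, the boundary curves of $P_j$ are $b_{j-1}$, the puncture loop $c_{j+1}$, and $b_j$, and the boundary relation of $P_j$ determines $c_{j+1}$ only up to a Dehn twist along the adjacent pants curve $b_{j-1}$; I would pick the unique twisting power for which $b_j=(c_1\cdots c_{j+1})^{-1}$ holds exactly. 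A twist along $b_{j-1}$ only modifies based loops crossing $b_{j-1}$, which can be taken to be $c_{j+1},\dots,c_n$, so it does not disturb $b_1,\dots,b_{j-1}$ (which involve only $c_1,\dots,c_j$) and leaves $\mathcal B$ untouched as a set of curves. In triangle-chain language this twist rotates the non-shared vertex $C_{j+1}$ of the $j$-th triangle about the shared vertex $B_{j-1}$, and choosing the standardizing power is exactly flipping that triangle onto the correct side. After $n-3$ steps the loops $c_{n-1},c_n$ are forced, $\mathcal B$ is standard for $c_1,\dots,c_n$, and the two observations of the first paragraph finish the proof; the case $n=3$ is trivial.

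The step I expect to be the real obstacle is the inductive normalization: one must check that the conjugacy ambiguity of $c_{j+1}$ coming from the boundary relation of $P_j$ is exactly a Dehn twist along $b_{j-1}$, that the twisting power solving $b_j=(c_1\cdots c_{j+1})^{-1}$ always exists and is unique, and that this adjustment does not propagate corrections back through the already-standardized part of the chain. Everything else --- preservation of non-degeneracy and the clockwise orientation --- then follows formally.
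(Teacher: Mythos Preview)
Your approach is the same as the paper's: keep the chained pants decomposition $\mathcal B$ from Proposition~\ref{prop:non-degenerate-pants-decomp} and build a new geometric presentation inductively along the chain so that $\mathcal B$ becomes standard. Your observation that non-degeneracy depends only on $(\rho,\mathcal B)$ is exactly what the paper uses implicitly.

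Where your write-up becomes imprecise is the Dehn-twist framing of the inductive step and the ``expected obstacle'' you flag. In the paper's execution this obstacle simply does not arise, because the based representatives $b_j$ are already fixed from the start: in the notation of the game in Proposition~\ref{prop:non-degenerate-pants-decomp}, one has $b_j=c_{I_j}^{-1}$ for the specific elements $c_I$ of $\pi_1\SpherePk$ defined there. With $b_{j-1}$ and $b_j$ both pinned down as elements of $\pi_1\SpherePk$, the relation $b_j=(c_1'\cdots c_{j+1}')^{-1}$ forces $c_{j+1}'=b_{j-1}b_j^{-1}$ uniquely --- there is no twisting power to search for. The only thing left to check is that this element is a peripheral loop around the correct puncture, and this follows from a one-line dichotomy: if the puncture $z$ removed at step~$j$ lay to the right of the gap $I$, then $c_I^{-1}c_{I\cup\{z\}}=c_z$, while if it lay to the left, $c_I^{-1}c_{I\cup\{z\}}=b_{j-1}c_zb_{j-1}^{-1}$. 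This explicit formula also makes the final relation $c_1'\cdots c_n'=1$ immediate, whereas your freer inductive choice of both $c_{j+1}$ and $b_j$ at each step would leave that closure condition to be verified separately.
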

\begin{proof}
We start by applying Proposition~\ref{prop:non-degenerate-pants-decomp} to find a chained pants decomposition $\mathcal B$ for which all triangles in the chain have positive area. In the proof of Propsotion~\ref{prop:non-degenerate-pants-decomp} we used a particular system of generators $c_1,\ldots,c_n$. We now switch to a new generating set $c_1',\ldots,c_n'$ for which the pants decomposition $\mathcal B$ is the standard one. It's constructed inductively, like this:
\begin{itemize}
    \item In the first move of the game of Proposition~\ref{prop:non-degenerate-pants-decomp}, we removed two neighbouring punctures, say $i$ and $i+1$. Let's define $c_1'=c_i$ and $c_2'=c_{i+1}$. The first pants curve is represented by the fundamental group element $c_{\{i,i+1\}}=c_ic_{i+1}$ and also by its inverse 
    \[
    b_1=(c_{\{i,i+1\}})^{-1}=(c_2')^{-1}(c_1')^{-1}.
    \]
    \item In the second move of the game, we removed either puncture $i-1$ or $i+2$. If we removed puncture $i+2$, then we simply define $c_3'=c_{i+2}$. The second pants curve is then represented by the fundamental group element $c_{\{i,i+1,i+2\}}=c_ic_{i+1}c_{i+2}$. In this case we also define $b_2=(c_{\{i,i+1,i+2\}})^{-1}$. If instead we removed puncture $i-1$, then we alternatively define $c_3'=b_1c_{i-1}b_1^{-1}$. In this case, the second pants curve is represented by $c_{\{i-1,i,i+1\}}=c_{i-1}c_ic_{i+1}$
    and we define $b_2=(c_{\{i-1,i,i+1\}})^{-1}$. In both cases,
    \[
    b_2=(c_3')^{-1}b_1=(c_3')^{-1}(c_2')^{-1}(c_1')^{-1}.
    \]
    \item Now, suppose we've just defined $c_j'$ with $3\leq j\leq n-3$ by modifying the fundamental group generator that we removed on the $(j-1)$th move. Let $I$ be the set of missing punctures at the beginning of the $j$th move and $z$ be the puncture removed on the $j$th move. Part of our induction hypotheses is that the last pants curve that we built is represented by the fundamental group element $b_{j-1}=c_I^{-1}=(c_j')^{-1}\cdots (c_2')^{-1}(c_1')^{-1}$. If $z$ was directly counterclockwise of $I$, then we let $c_{j+1}'=c_z$. If $z$ was clockwise of $I$, then we let $c_{j+1}'=b_{j-1}c_zb_{j-1}^{-1}$. In both cases, the new pants curve is represented by 
    \[
    b_j=(c_{I\cup \{z\}})^{-1}=(c_{j+1}')^{-1}(c_j')^{-1}\cdots (c_2')^{-1}(c_1')^{-1}.
    \]
    \item Once we've defined $c_{n-2}'$, then we finish the construction as follows. The last pants curve we built is represented by the fundamental group element 
    \[
    b_{n-3}=(c_I)^{-1}=(c_{n-2}')^{-1}\cdots (c_2')^{-1}(c_1')^{-1},
    \]
    where $I$ is now a set of all but two neighbouring punctures which we denote $x$ and $y$. Assume that $x$ is directly counterclockwise of $I$. We define $c_{n-1}'=c_x$ and $c_n'=c_y$. Since $x$ is directly counterclockwise of $I$ and $c_1\cdots c_n=1$, it holds that $(c_I)^{-1}=c_xc_y=c_{n-1}'c_n'$. Comparing with the expression of $b_{n-3}$ above, we conclude that $c_1'\cdots c_n'=1$.
\end{itemize}
We have produced a collection $c_1',\ldots,c_n'$ of fundamental group element satisfying $c_1'\cdots c_n'=1$. By construction, each of the $c_j'$ is conjugate to one of the generators $c_1,\ldots, c_n$. Moreover, the pants curves of $\mathcal B$ are represented by $b_j=(c_{j+1}')^{-1}\cdots (c_2')^{-1}(c_1')^{-1}$, making $\mathcal B$ the standard pants decomposition of $\SpherePk$ for the geometric generators $c_1',\ldots,c_n'$ of $\pi_1\SpherePk$.
\end{proof}
We can even go one step further than Proposition~\ref{prop:non-degenerate-standard-pants-decomp} to make sure that adjacent triangles in a chain don't overlap. Two triangles are said to \emph{overlap} if their interiors aren't disjoint. Note that non-overlapping triangles are allowed to intersect along their boundaries. In terms of the action-angle coordinates of~\cite{action-angle}, two non-degenerate adjacent triangles with common vertex corresponding to the pants curve $b$ overlap if and only if $\pi<\gamma_b<2\pi$ when $\gamma_b$ is seen as a number in $[0,2\pi)$. When $\gamma_b\in \{0,\pi\}$, the triangles touch along their boundaries.
\begin{prop}\label{prop:no-overlap}
For every DT representation $\rho\colon \pi_1\SpherePk\to \psl$, there exists a geometric presentation of $\pi_1\SpherePk$ such that every triangle in the triangle chain associated with its standard pants decomposition has positive area and no pair of adjacent triangles overlap.\footnote{In fact, we'll prove something slightly stronger, as discussed in Remark~\ref{rem:no-overlap-more-precise}.}
\end{prop}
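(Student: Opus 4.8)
The plan is to take the geometric presentation of $\pi_1\SpherePk$ supplied by Proposition~\ref{prop:non-degenerate-standard-pants-decomp} --- call its standard pants decomposition $\mathcal{B}$, so that every triangle in the chain of $\rho$ already has positive area --- and then kill the overlaps by modifying the presentation with Dehn twists along the curves of $\mathcal{B}$, keeping $\mathcal{B}$ standard throughout. Recall from the discussion just before the proposition that two non-degenerate adjacent triangles joined at the vertex coming from a pants curve $b$ overlap exactly when the angle coordinate $\gamma_b\in\R/2\pi\Z$ satisfies $\gamma_b\in(\pi,2\pi)$. So it suffices to produce a geometric presentation of $\pi_1\SpherePk$ whose standard pants decomposition is $\mathcal{B}$, for which the triangles of the chain of $\rho$ remain non-degenerate and every $\gamma_b$ can be taken in $[0,\pi]$.

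First I would analyze one Dehn twist. A pants curve $b\in\mathcal{B}$ separates the punctures of $\SpherePk$ into two groups, and the Dehn twist $T_b$ acts on a geometric presentation by conjugating the generators attached to the punctures on one side of $b$ by $b^{\pm1}$ and fixing the rest; one checks this yields another geometric presentation (Dehn twists fix the punctures, hence send peripheral elements to conjugates of peripheral elements) and preserves the relator $c_1\cdots c_n=1$. Downstairs, this conjugates the corresponding elliptics by $\rho(b)^{\pm1}$, i.e.\ it rigidly rotates the whole sub-chain on that side of $b$ about the vertex $B_b$, the fixed point of $\rho(b)$. Consequently: no triangle changes shape, so non-degeneracy is preserved; the pants curves of the new standard pants decomposition are conjugates of the old $b_j$, hence freely homotopic --- so isotopic as simple closed curves --- to them, so $\mathcal{B}$ is again the standard pants decomposition; the action coordinates $\beta_j$ are unchanged since they depend only on $\rho$ and $\mathcal{B}$; the angle coordinates $\gamma_j$ with $j\neq b$ are unchanged, being internal either to the rigidly rotated sub-chain or to the part left fixed; and $\gamma_b$ shifts by the rotation angle of $\rho(b)$, which is $\pm\beta_b$ modulo $2\pi$ --- this is exactly the statement recorded in the remark following Theorem~\ref{thm:holonomy-hamantash} (see also~\cite{action-angle}). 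Finally, distinct --- hence disjoint --- pants curves have commuting Dehn twists, so a product $\prod_{b\in\mathcal{B}}T_b^{m_b}$ shifts each $\gamma_b$ by $m_b\beta_b\pmod{2\pi}$ and changes nothing else.

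The remaining point is arithmetic. Fix $b\in\mathcal{B}$. Since $\beta_b\in(0,2\pi)$, at least one of $\beta_b$ and $\beta_b':=2\pi-\beta_b$ is $\le\pi$, and as $\beta_b'\equiv-\beta_b\pmod{2\pi}$ the orbit $\{\gamma_b+k\beta_b\bmod 2\pi:k\in\Z\}$ coincides with $\{\gamma_b+k\beta_b'\bmod 2\pi:k\in\Z\}$, so all its gaps are at most $\min(\beta_b,\beta_b')\le\pi$; since the closed arc $[0,\pi]$ has length $\pi$ it must contain an orbit point, i.e.\ there is $m_b\in\Z$ with $\gamma_b+m_b\beta_b\bmod 2\pi\in[0,\pi]$. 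Applying $\prod_{b}T_b^{m_b}$ to the presentation from Proposition~\ref{prop:non-degenerate-standard-pants-decomp} then gives a geometric presentation of $\pi_1\SpherePk$ with standard pants decomposition $\mathcal{B}$, whose triangle chain for $\rho$ still has all triangles of positive area and all angle coordinates in $[0,\pi]$; by the overlap criterion, no two adjacent triangles overlap. The same reasoning in fact moves $\gamma_b$ to \emph{any} point of its $\beta_b$-orbit, which is presumably the slightly stronger assertion of the footnote.

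The step I expect to be the main obstacle is pinning down the Dehn-twist action on the coordinates precisely: that $T_b$ shifts $\gamma_b$ by exactly $\pm\beta_b$ while leaving all other $\gamma_j$ and all $\beta_j$ fixed. I would establish this either by carefully tracking how conjugating the ``inner'' generators by the elliptic $\rho(b)$ rigidly rotates the inner sub-chain about $B_b$ by the rotation angle of $\rho(b)$ --- computed from the interior angles at $B_b$, which are $\pi-\beta_b/2$ and $\beta_b/2$ --- or by directly invoking the corresponding computation in~\cite{action-angle}; a secondary point requiring care is checking that the conjugated pants curves really do give back the \emph{standard} pants decomposition, which is the short computation indicated above.
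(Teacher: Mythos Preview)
Your proposal is correct and follows essentially the same approach as the paper: start from the presentation of Proposition~\ref{prop:non-degenerate-standard-pants-decomp}, then apply powers of Dehn twists along the pants curves to shift each $\gamma_b$ into $[0,\pi]$, using that such a twist changes only $\gamma_b$ (by $\pm\beta_b$) and nothing else. The paper writes out the automorphism $\tau_i$ explicitly and treats the cases $\beta_i\le\pi$ and $\beta_i>\pi$ separately, whereas you phrase the arithmetic via gaps in the $\beta_b$-orbit on $\R/2\pi\Z$; these are equivalent.

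One small correction: your guess about the footnote is off. The ``slightly stronger'' statement is not that one can move $\gamma_b$ to any point of its $\beta_b$-orbit, but rather that one can land each $\gamma_b$ in the half-open interval $[0,\pi)$ (not just $[0,\pi]$). Your argument already gives this with a minor tweak: using the step $\delta=\min(\beta_b,\beta_b')\in(0,\pi]$, the first iterate $\gamma_b-k\delta$ that drops below $\pi$ lies in $[\pi-\delta,\pi)\subset[0,\pi)$.
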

\begin{proof}
We start from the geometric presentation $c_1,\ldots,c_n$ provided by Proposition~\ref{prop:non-degenerate-standard-pants-decomp}. It guarantees that all the triangles in the chain associated to $\rho$ and built using the standard pants decomposition $b_1,\ldots, b_{n-3}$ are non-degenerate. Suppose that the two triangles that share the vertex corresponding to the fixed point of $\rho(b_i)$ overlap. In terms of action-angle coordinates, this means that $\gamma_i \in (\pi, 2\pi)$.

Consider the fundamental group automorphism $\tau_i\colon \pi_1\SpherePk\to \pi_1\SpherePk$ defined by
\[ \tau_i(c_j) = \begin{cases}
c_j & j \le i+1 \\
b_i c_j b_i^{-1} & j \ge i+2.
\end{cases} \]
We can see that $\tau_i(c_1)\cdots\tau_i(c_n)=1$ by unpacking the definition of $\tau_i$ and recalling that $b_i=c_{i+1}^{-1} c_i^{-1} \cdots c_1^{-1}$. (Under the classical Dehn--Nielsen--Baer correspondence, explained for instance in~\cite[§8]{mcg-primer}, the automoprhism $\tau_i$ comes from the Dehn twist along the simple closed curve $b_i$.) Changing the generating family of $\pi_1\SpherePk$ from $\{c_1,\ldots,c_n\}$ to $\tau_i(\{c_1,\ldots,c_n\})$ only affects one action-angle coordinate of $\rho$: the angle coordinate $\gamma_i$. The change in $\gamma_i$ is given by the formula
\begin{align*}
\gamma_i^\text{new}(\rho) & = \gamma_i^\text{old}(\rho\circ \tau_i) \\
& = \gamma_i^\text{old}(\rho) - \beta_i.
\end{align*}
Similarly, changing the generating family of $\pi_1\SpherePk$ from $\{c_1,\ldots,c_n\}$ to $\tau_i^{-1}(\{c_1,\ldots,c_n\})$ gives
\begin{align*}
\gamma_i^\text{new}(\rho) & = \gamma_i^\text{old}(\rho\circ \tau_i^{-1}) \\
& = \gamma_i^\text{old}(\rho) - \beta'_i,
\end{align*}
leaving all of the other action-angle coordinates unchanged.

Suppose $\beta_i \in (0, \pi]$. Since our assumptions imply that $\gamma_i \in [\pi, 2\pi)$, there's an integer $n \geq 1$ with $\gamma_i - n\beta_i \in [0, \pi)$. We change the generating family to $\tau_i^n(\{c_1,\ldots,c_n\})$, giving $\gamma_i^\text{new} = \gamma_i^\text{old} - n\beta_i$, so we end up with $\gamma_i^\text{new} \in [0, \pi)$, as desired. On the other hand, suppose $\beta_i \in (\pi, 2\pi)$. That means $\beta'_i \in (0, \pi]$, and there's an integer $n' \geq 1$ with $\gamma_i - n'\beta'_i \in [0, \pi)$. We change the generating family to $\tau_i^{-n}(\{c_1,\ldots,c_n\})$, so we end up with $\gamma_i^\text{new} \in [0, \pi)$ in the same way.

In the triangle chain built from the new generating family, the triangles meeting at the fixed point $\rho(b_i)$ no longer overlap. Since the change of generating family only affects $\gamma_i$, leaving the other action-angle coordinates of $\rho$ unchanged, it does not create any new overlaps between other pairs of adjacent triangles. Therefore, by repeating the process above for each pair of adjacent triangles, we can get rid of all the overlaps.
\end{proof}
\begin{rem}\label{rem:no-overlap-more-precise}
Our proof of Proposition~\ref{prop:no-overlap} actually shows something a bit stronger: it produces a geometric presentation of $\pi_1\SpherePk$ with $\gamma_i\in[0,\pi)$ for every $i=1,\ldots,n-3$. This excludes not only overlaps, but also some almost-overlaps: the ones where $\gamma_i = \pi$ for some $i$. This stronger result will be useful in the proof of Corollary~\ref{cor:surjectivity-holonomies}.
\end{rem}
\subsection{Building a hyperbolic cone sphere with prescribed holonomy}
We know from Theorem~\ref{thm:holonomy-hamantash} that for each pants assembly $\Upsilon$, with underlying set of pants curves $\mathcal B$, the hamantash assemblies $\Haman_{\alpha, \Upsilon}^{\textnormal{north}}$ realize a large, neatly shaped piece of the dense open subset $\IntRepDT{\alpha}{\mathcal B}(\AssembSpherePk{\Upsilon}) \subset \RepDT{\alpha}(\AssembSpherePk{\Upsilon})$. We'll now show that if we iterate over all pants assemblies $\Upsilon$, the images of the holonomy maps $\Haman_{\alpha, \Upsilon}^{\textnormal{north}} \longrightarrow \RepDT{\alpha}(\AssembSpherePk{\Upsilon})$ cover all of $\RepDT{\alpha}(\SpherePk)$. 
\begin{cor}\label{cor:surjectivity-holonomies}
Every DT representation $\rho\colon \pi_1\SpherePk\to \operatorname{PSL}_2\R$ is the holonomy of a hyperbolic cone metric constructed from a hamantash assembly.
\end{cor}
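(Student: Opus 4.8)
The plan is to deduce the corollary by feeding the output of Propositions~\ref{prop:non-degenerate-pants-decomp}--\ref{prop:no-overlap} into the surjectivity half of Theorem~\ref{thm:holonomy-hamantash}. Throughout, $\alpha$ is fixed in the large-angle case, as in Section~\ref{sec:space-of-hyperbolic-cone-metrics} (Condition~\eqref{cond:defect-sum}); the small-angle case is entirely analogous, using hamantashen with slits in southern hemispheres in place of northern ones, and can also be obtained from the large-angle case by reversing the orientation of the realized cone sphere, which negates the rotation angles of the holonomy and exchanges the two cases of Theorem~\ref{thm:compact-comp}. So let $\rho\colon\pi_1\SpherePk\to\psl$ be a DT representation for this $\alpha$.

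First I would apply Proposition~\ref{prop:no-overlap}, together with the strengthening recorded in Remark~\ref{rem:no-overlap-more-precise}, to produce a geometric presentation $c_1,\dots,c_n$ of $\pi_1\SpherePk$ whose standard pants decomposition $\mathcal B=\{b_1,\dots,b_{n-3}\}$ has the property that the triangle chain of $\rho$ has no degenerate triangle and satisfies $\gamma_i(\rho)\in[0,\pi)$ for every $i$. Being standard, $\mathcal B$ is chained (Definition~\ref{defn:chained-pants-decomp} and the remark after it). Since no triangle in the chain degenerates, $\rho$ lies in $\IntRepDT{\alpha}{\mathcal B}(\SpherePk)$, so Corollary~\ref{cor:action-angle} applies to it; in particular its action coordinates satisfy $\beta(\rho)\in\overset{\circ}{\Delta}$, while its angle coordinates satisfy $\gamma(\rho)\in[0,\pi)^{\mathcal B}\subset[0,\pi]^{\mathcal B}$.

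Next I would transport $\rho$ into the model of Chapter~\ref{sec:building-cone-surfs}. Cutting $\SpherePk$ along the curves of $\mathcal B$ and decorating the resulting pieces with their orientations and cyclic cuff orderings yields a chained pants assembly $\Upsilon$; by the construction of Section~\ref{sec:marking}, the generic realization sphere $\AssembSpherePk{\Upsilon}$ carries a distinguished geometric presentation whose standard pants decomposition is the set of ribs, identified with $\mathcal B$. Matching the decorations of $\Upsilon$ to $(c_1,\dots,c_n,\mathcal B)$ gives an orientation-preserving homeomorphism $\SpherePk\overset{\cong}{\longrightarrow}\AssembSpherePk{\Upsilon}$ carrying $c_i$ to the $i$-th distinguished generator and the puncture labels accordingly; pulling back along it turns $\rho$ into a DT representation $\rho'\in\IntRepDT{\alpha}{\mathcal B}(\AssembSpherePk{\Upsilon})$ with the same action-angle coordinates $(\beta,\gamma):=(\beta(\rho),\gamma(\rho))$. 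Since $\beta\in\overset{\circ}{\Delta}$ and $\gamma\in[0,\pi]^{\mathcal B}$, the surjectivity assertion of Theorem~\ref{thm:holonomy-hamantash} supplies a hamantash assembly $H\in\Haman_{\alpha,\Upsilon}^{\textnormal{north}}$ whose image under $(\beta,\;\phi'+\beta/2-\phi)$ is exactly $(\beta,\gamma)$. Commutativity of the diagram in that theorem then shows that the holonomy of the hyperbolic cone structure $R(H)\in\HypCone{\alpha}(\AssembSphereExt{\Upsilon})$ has action-angle coordinates $(\beta,\gamma)$ and lies in $\IntRepDT{\alpha}{\mathcal B}(\AssembSpherePk{\Upsilon})$; as these coordinates are injective there (Corollary~\ref{cor:action-angle}), the holonomy equals $[\rho']$. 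Finally, $R(H)$ induces a hyperbolic cone metric on $\AssembSphereExt{\Upsilon}$ (Section~\ref{sec:hyperbolic-cone-structures}), and forgetting the $n-3$ whole-singularity marked points and undoing the homeomorphism exhibits $\rho$ as its holonomy. Since this cone metric is, by definition of $R$, built from the hamantash assembly $H$, the proof is complete.

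The genuinely substantive content has already been discharged by the cited results: the obstacle is that a generic DT representation has angle coordinates ranging over all of $(\R/2\pi\Z)^{n-3}$, whereas hamantash assemblies with slits in northern hemispheres only realize $\gamma\in[0,\pi]^{\mathcal B}$, and they realize the action coordinates only over the open polytope $\overset{\circ}{\Delta}$. Propositions~\ref{prop:non-degenerate-pants-decomp}--\ref{prop:no-overlap} are precisely what bring an arbitrary $\rho$ into this window---the ``coloring game'' avoids degenerate triangles so that $\beta$ is in $\overset{\circ}{\Delta}$, and the Dehn-twist adjustments of the generating set push each $\gamma_i$ into $[0,\pi)$. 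The only point requiring care in the present argument is therefore bookkeeping: checking that changing the geometric presentation and the pants decomposition stays compatible with the marking conventions of Section~\ref{sec:marking}, so that the transported representation $\rho'$ really does have the same action-angle coordinates as $\rho$ and the realization $R(H)$ really does have holonomy $\rho$ under the chosen identification.
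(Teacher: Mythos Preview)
Your proposal is correct and follows essentially the same route as the paper: apply Proposition~\ref{prop:no-overlap} (with Remark~\ref{rem:no-overlap-more-precise}) to bring $(\beta,\gamma)$ into $\overset{\circ}{\Delta}\times[0,\pi)^{\mathcal B}$, identify $\SpherePk$ with $\AssembSpherePk{\Upsilon}$ via the distinguished presentation, and then invoke Theorem~\ref{thm:holonomy-hamantash}. The only difference is packaging: where you cite the surjectivity clause of Theorem~\ref{thm:holonomy-hamantash} as a black box to produce $H$, the paper spells out the construction of $H_\rho$ explicitly---choosing $\phi_i\in(0,\beta_i/2]$, $\phi'_i\in[0,\beta'_i/2)$ with $\phi'_i+\beta_i/2-\phi_i=\gamma_i$ and then shrinking the slit lengths---which is in effect the proof of that surjectivity clause.
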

\begin{proof}
We apply Proposition~\ref{prop:no-overlap} to find a geometric presentation of $\pi_1\SpherePk$ such that all the triangles in the triangle chain associated with $\rho$ for the standard pants decomposition $\mathcal{B}$ have positive area and don't overlap with adjacent triangles. The pants curves of $\mathcal B$ are represented by the fundamental group elements $b_1,\ldots,b_{n-3}$. Let $\beta_1,\ldots,\beta_{n-3}$ and $\gamma_1,\ldots,\gamma_{n-3}$ be the action-angle coordinates of $\rho$ for the pants decomposition $\mathcal B$. In the triangle chain associated with $\rho$, no consecutive triangles overlap, so $\gamma_i\in [0,\pi]$ for every $i=1,\ldots, n-3$. We can even adapt the geometric presentation to make sure that $\gamma_i\in [0,\pi)$ for every $i=1,\ldots, n-3$, as discussed in Remark~\ref{rem:no-overlap-more-precise}.

From a combinatorial point of view, the pants decomposition $\mathcal{B}$ gives us a bunch of pairs of pants with a relation that says certain pairs of cuffs should be glued together. Each pair of pants comes with an orientation, and the pants are glued with matching orientations. Adding a little extra structure will turn this data into a genus-0 pants assembly, as defined in Section~\ref{sec:assembly-instructions-parametrization}. First, for each pair of pants, we need to give a cyclic ordering of the cuffs. We do this by recalling that each pair of pants corresponds to a triangle in the triangle chain, and its three cuffs correspond to the three vertices of the triangle. We use the clockwise ordering of the vertices as the cyclic ordering of the cuffs. Next, we need to orient each pants curve. We use the orientations of the loops $b_1, \ldots, b_{n-3}$.

We've now defined a genus-0 pants assembly $\Upsilon$ that encodes the pants decomposition $\mathcal{B}$ of $\SpherePk$. Our next goal is to identify $\SpherePk$ with the generic realization sphere $\AssembSpherePk{\Upsilon}$. Recall from Section~\ref{sec:marking} that $\pi_1\AssembSpherePk{\Upsilon}$ comes with a distinguished geometric presentation, whose standard pants decomposition is $\mathcal{B}$. There's a unique isotopy class of homeomorphisms $\AssembSpherePk{\Upsilon} \to \SpherePk$ that sends this geometric presentation to the chosen geometric presentation of $\pi_1\SpherePk$. This is the desired identification.

Now, let's build a hamantash assembly in $\Haman_{\alpha,\Upsilon}^{\textnormal{north}}$ from the triangle chain associated with~$\rho$. As discussed in Section~\ref{sec:holonomy-hamantash}, we think of each triangle in the triangle chain as the southern hemisphere of a samosa, which is sitting on the plane with its interior side facing upward. To complete the samosa, we need to add a northern hemisphere: a copy of the same triangle with the opposite orientation. We place the northern hemisphere triangle on top of the southern hemisphere triangle and glue the edges. This turns the triangle chain into a sequence of samosas.

Next, from each shared vertex $B_i$ of the triangle chain, we draw slits on the northern hemispheres of the adjacent samosas. We choose the slit angles $\phi_i \in (0, \beta_i/2]$ and $\phi'_i \in  [0, \beta'_i/2)$ so that $\gamma_i = \phi'_i + \beta_i/2 - \phi_i$. We can always choose the slit angles this way, because the range of the expression $\phi'_i + \beta_i/2 - \phi_i$ with $\phi_i \in (0, \beta_i/2]$ and $\phi'_i \in [0, \beta'_i/2)$ is the interval $[0,\pi)$, which matches the range of $\gamma_i$. If $\gamma_i$ is $0$, there's only one choice: $\phi_i = \beta_i/2$ and $\phi_i'=0$. On the other hand, if $\gamma_i\in (0,\pi)$, there's a continuum of possible choices for $\phi_i$ and $\phi'_i$.  We give the slits on both sides of $B_i$ the same length $\ell_i$, so they can be cut and glued together. This turns our sequence of samosas into a samosa assembly whose underlying pants assembly is $\Upsilon$. Finally, because we made sure that $\phi_i$ is never equal to $0$ and $\phi_i'$ is never equal to $\beta_i'/2$, we can pick $\ell_1, \ldots, \ell_{n-3}$ small enough to ensure that our samosa assembly is a hamantash assembly in $\Haman_{\alpha,\Upsilon}^{\textnormal{north}}$ (Lemmas~\ref{lem:bound-on-l-one-slit} and~\ref{lem:bound-on-l-two-slits}). Label the resulting hamantash assembly~$H_\rho$.

We can now apply Theorem~\ref{thm:holonomy-hamantash}, recalling that we've identified $\SpherePk$ with $\AssembSpherePk{\Upsilon}$, and conclude that the image of $H_\rho$ under the holonomy map has the same action-angle coordinates as $\rho$. In other words, the holonomy map sends $H_\rho$ to the conjugacy class of $\rho$.
\end{proof}

\section{Allowing degenerate configurations}
To parameterize hyperbolic cone structures in the larger space $\AugHypCone{\alpha}(\SphereExtMk)$, where whole singularities are allowed to coalesce with other singularities (Section~\ref{sec:space-of-hyperbolic-cone-structures}), one might try to extend the realization process to degenerate samosa assemblies, whose slits are allowed to end at the ends of other slits and at fractional singularities (Definition~\ref{defn:samosa-assembly}). Intuitively, most degenerate samosa assemblies should have realizations in $\AugHypCone{\alpha}(\AssembSphereExt{\Upsilon})$, but some degenerate samosa assemblies might need to be excluded from the domain $\AugAssemb_{\alpha, \Upsilon}$ of the extended realization map. For example, since fractional singularities aren't allowed to merge in $\AugHypCone{\alpha}(\AssembSphereExt{\Upsilon})$, it might not be possible to realize samosa assemblies where two slits that end on fractional singularities are meant to be glued together.
\begin{question}\label{q:degenerate-configurations-?}
Does the realization map from Section~\ref{sec:marking} extend meaningfully to a realization map $\overline{R}\colon \AugAssemb_{\alpha, \Upsilon} \to \AugHypCone{\alpha}(\AssembSphereExt{\Upsilon})$, where the domain is expanded to include some degenerate samosa assemblies? If so, can $\overline{R}$ be used to chart $\AugHypCone{\alpha}(\AssembSphereExt{\Upsilon})$?
\end{question}
\begin{rem}
Even if an extended realization map could produce new hyperbolic cone structures, it wouldn't produce any new holonomy representations, because every DT representation is already realized by a non-degenerate samosa assembly (Corollary~\ref{cor:surjectivity-holonomies}).
\end{rem}
Any degenerate samosa assembly can be seen as a limit of non-degenerate ones by shortening the slits that end at fractional singularities and at the ends of other slits. One could thus start studying Question~\ref{q:degenerate-configurations-?} by verifying that the realized sequence of hyperbolic cone structures has a limit in $\AugHypCone{\alpha}(\AssembSphereExt{\Upsilon})$. This limit might then be interpreted as the realization of the initial degenerate samosa assembly. Note that all the realized hyperbolic cone spheres in the sequence would have the same holonomy, since the samosa assemblies in the sequence differ only in their slit lengths. This suggests a relationship between Questions~\ref{q:get-coordinates-for-Teich-?} and~\ref{q:degenerate-configurations-?}.
\appendix
\section{Trigonometric formulas}\label{apx:trig-formulae}
The following classical formulas will be useful for our study of hyperbolic cone metrics. We assume that we're working with a non-degenerate hyperbolic triangle of side lengths $(a,b,c)$ and interior angles $(\alpha, \beta, \gamma)$. 
\subsubsection*{Hyperbolic laws of cosines}
\begin{align}
    \cos(\gamma)&=\frac{\cosh(a)\cosh(b)-\cosh(c)}{\sinh(a)\sinh(b)} \tag{C.1}\label{eq:hyperbolic-law-of-cosines-lengths},\\
    \cosh(c)&= \frac{\cos(\alpha)\cos(\beta)+\cos(\gamma)}{\sin(\alpha)\sin(\beta)}. \tag{C.2}\label{eq:hyperbolic-law-of-cosines-angles}
\end{align}
\subsubsection*{Hyperbolic law of sines}
\begin{equation}
    \sin(\alpha)=\sinh(a)\frac{\sin(\beta)}{\sinh(b)} \tag{S}\label{eq:hyperbolic-law-of-sines}.
\end{equation}
\subsubsection*{Four-parts formula}
\begin{equation}
    \cos (\gamma)\cosh(a)=\sinh(a)\coth(b)-\sin(\gamma)\cot(\beta). \tag{F}\label{eq:four-part-formula}
\end{equation}
\section{Barycentric coordinates in the hyperbolic plane}\label{apx:barycentric-coordinates}
\subsection{Convex combinations of points in the hyperbolic plane}
The barycentric coordinates on hyperbolic triangles used in the proof of Proposition~\ref{length-homeo} are defined in terms of ``convex combinations'' of points in the hyperbolic plane, analogous to the usual convex combinations in a vector space. This approach is taken from the work of Stahl~\cite{stahl}.
\begin{defn}
A \emph{point-mass} is a pair $(X,x)$ consisting of a point $X$ in the hyperbolic plane and positive real number $x$. A \emph{point-mass system} is a finite collection of point-masses.
\end{defn} 
A point-mass system has a \emph{center of mass}, which is the analogue of a weighted sum in a vector space. The center of mass of $(X,x)$ and $(Y,y)$ is defined to be the point-mass $(Z, z)$ given by the unique point $Z$ on the geodesic segment $XY$ satisfying
\[
x\sinh\big(d(X,Z)\big)=y\sinh\big(d(Y,Z)\big)
\]
and the real number $z = x\cosh\big(d(X,Z)\big)+y\cosh\big(d(Y,Z)\big)$. The center of mass can be seen as the result of a binary operation: $(Z,z) = (X,x)\ast (Y,y)$. In the special case where the points are the same and only the masses differ, the $\ast$ operation simplifies to $(X,x) \ast (X,x') = (X,x+x')$.

Stahl proved that the $\ast$ operation is commutative and associative~\cite[Proposition~3.3]{stahl}, leading to a well-defined notion of \emph{center of mass} for an arbitrarily large point-mass system $\{(X_1,x_1), \ldots, (X_n,x_n)\}$, given by the point-mass
\[
(C,c)=(X_1,x_1)\ast\cdots\ast (X_n,x_n).
\]
The mass $c$ can be expressed as $c=\sum_i x_i\cosh\big(d(X_i,C)\big)$~\cite[Theorem~3.7]{stahl}.
\subsection{Parametrizing hyperbolic segments}
The center of mass gives a smooth parametrization of the hyperbolic segment between two points $X$ and $Y$ via the map $t\mapsto Z_t$ defined by 
\[
(Z_t,z_t)=(X,1-t)\ast (Y,t).
\]
Letting $Z_0=X$ and $Z_1=Y$, we get a map $Z_t\colon [0,1]\to \HH$. It's possible to write an explicit formula for $Z_t$. We'll do this in the Poincaré disk model for convenience, choosing a point of view where $X$ is at the origin and $Y$ is on the positive real axis, with coordinate $\eta \in (0, 1)$. After some computations, we get
\begin{equation}\label{eq:parameterization-hyperbolic-segment}
Z_t=\frac{1+\eta^2(2t-1)-\sqrt{(1-\eta^2)(1-\eta^2(2t-1)^2)}}{2\eta t}.
\end{equation}
The function $t\mapsto Z_t$ is a smooth function $(0,1) \to \HH$ whose derivatives all have right and left limits as $t$ approaches $0$ and $1$ respectively. As a family parameterized by $Y$, the maps $t\mapsto Z_t$ depend continuously on the variable $\eta$ in the $\mathcal{C}^\infty$ topology on $\mathcal{C}^\infty \big((0,1),\HH\big)$. 

\subsection{Parametrizing hyperbolic triangles}
Our study of hyperbolic cone structures and their relation to triangulations (Propostition~\ref{length-homeo}) requires parameterizing hyperbolic triangles in way that the induced parameterization on each side only depends on the length of the corresponding side. This is precisely what convex combinations of points do. We'll denote the standard 2-simplex in $\R^3$ by $\Delta=\{(t_1,t_2,t_3)\in (0, 1)^3 : t_1+t_2+t_3=1\}$.
\begin{defn}\label{defn:barycentric-coordinates}
The \emph{barycentric coordinates} on the hyperbolic triangle with vertices $(X_1,X_2,X_3)$ are given by the map $f\colon \Delta\to \HH$ that sends $(t_1,t_2,t_3)$ to the point $Z$ defined by the relation
\[
(Z,z)= (X_1,t_1)\ast (X_2,t_2)\ast (X_3,t_3),
\]
discarding the mass $z$. The image of $f$ is the interior of the triangle $(X_1,X_2,X_3)$.
\end{defn}
Barycentric coordinates for hyperbolic triangles are analogous in various ways to the classical barycentric coordinates for Euclidean triangles. For instance, Ungar proved that the point $C$ defined by 
\[
(C,c)=(X_1,1/3)\ast(X_2,1/3)\ast (X_3,1/3)
\]
coincides with the intersection of the medians in the triangle $(X_1,X_2,X_3)$~\cite{ungar}. 

The map $f$ from Definiton~\ref{defn:barycentric-coordinates} extends naturally to the closed simplex $\overline{\Delta}$. The induced parametrizations of the sides of $f(\overline{\Delta})$ only depend on the sides' lengths, in the sense that if $f$ and $f'$ are the barycentric coordinate parameterizations of two triangles $(X_1,X_2,X_3)$ and $(X_1',X_2',X_3')$ with $d(X_1,X_2)=d(X_1',X_2')$, then the orientation-preserving isometry $A$ of $\HH$ that sends $X_1$ to $X_1'$ and $X_2$ to $X_2'$ also sends $f(t_1,t_2,0)$ to $f'(t_1,t_2,0)$ for every coordinate triple $(t_1,t_2,0) \in \overline{\Delta}$.

You can prove that $f$ is a smooth map by thinking of $f(t_1,t_2,t_3)$ as the center of mass of $(X_1,t_1)$ and $(X_2,t_2)\ast (X_3,t_3)$, describing it as the composition of two smooth maps of the form~\eqref{eq:parameterization-hyperbolic-segment}. Similarly, using the continuity of the $\ast$ operation, you can show that the map $f$ depends continuously on the vertices $(X_1,X_2,X_3)$ in the $\mathcal{C}^\infty$ topology on $\mathcal{C}^\infty(\Delta, \HH)$. 

\bibliographystyle{amsalpha}
\bibliography{references.bib}

\end{document}